\theoremstyle{plain}
\newtheorem{theorem}{Theorem}[subsection]
\newtheorem{corollary}[theorem]{Corollary}
\newtheorem{lemma}[theorem]{Lemma}
\newtheorem{proposition}[theorem]{Proposition}
\theoremstyle{definition}
\newtheorem{remark}[theorem]{Remark}
\newcommand{\norm}[1]{\lVert#1\rVert}
\renewcommand{\leq}{\leqslant}
\renewcommand{\geq}{\geqslant}\usepackage{amssymb}
\newcommand{\vr}{\varepsilon}
\newcommand{\one}{\mathbf{1}}
\newcommand{\be}{\begin{equation}}
\newcommand{\ee}{\end{equation}}
\newcommand{\R}{\mathbb R}
\newcommand{\Z}{\mathbb Z}
\newcommand{\N}{\mathbb N}
\newcommand{\C}{\mathbb C}
\newcommand{\schu}{\mathbf S}
\newcommand{\pp}{\mathbf P}
\newcommand{\is}{\mathcal S}
\newcommand{\trace}{\mathrm{tr}}
\newcommand{\ce}{\mathcal E}
\newcommand{\se}{{\mathcal S}_{\mathcal{E}}}
\newcommand{\ball}{\mathbf{B}}
\newcommand{\A}{\mathcal{A}}
\newcommand{\pprec}{\prec \!\! \prec}
\newcommand{\gen}{\mathbf{G}}
\newcommand{\nor}{\mathbf{N}}
\newcommand{\psol}{\mathbf{PSol}}
\def \rank{{\mathrm{rank}} \, }
\def \ker{{\mathrm{ker}} \, }
\def \tr{{\mathrm{Tr}} }
\renewcommand{\span}{\mathrm{span}}
\def \vr{\varepsilon}
\def \ran{{\mathrm{ran}} \, }
\def \eqalign#1{\null\,\vcenter{\openup\jot %%%   \m@th
   \ialign{\strut\hfil$\displaystyle{##}$&$
      \displaystyle{{}##}$\hfil \crcr#1\crcr}}\,}
\DeclareMathOperator{\re}{Re}
\begin{document}

\baselineskip=16pt

\numberwithin{equation}{section}

\pagestyle{headings}

\title[Domination of operators]{Domination of operators in the non-commutative
setting}

\author[T. Oikhberg]{Timur Oikhberg}
\address{
% Dept.~of Mathematics, University of California - Irvine, Irvine CA 92697}
% , {\it and} \\
Dept.~of Mathematics, University of Illinois at Urbana-Champaign, Urbana IL 61801, USA}
% \email{toikhber@math.uci.edu}
\email{oikhberg@illinois.edu}

\author[E. Spinu]{Eugeniu Spinu}
\address{Dept. of Mathematical and Statistical Sciences, University of Alberta
Edmonton, Alberta  T6G 2G1, CANADA}
\email{espinu@ualberta.ca}

\begin{abstract}
We consider majorization problems in the non-commutative setting.
More specifically, suppose $E$ and $F$ are ordered normed spaces
(not necessarily lattices), and $0 \leq T \leq S$ in $B(E,F)$.
If $S$ belongs to a certain ideal (for instance, the ideal of compact
or Dunford-Pettis operators), does it follow that $T$ belongs to that
ideal as well? We concentrate on the case when $E$ and $F$ are
$C^*$-algebras, preduals of von Neumann algebras, or non-commutative
function spaces.
In particular, we show that, for $C^*$-algebras $\A$ and ${\mathcal{B}}$,
the following are equivalent: (1) at least one of the two conditions holds:
(i) $\A$ is scattered, (ii) ${\mathcal{B}}$ is compact;
(2) if $0 \leq T \leq S : \A \to {\mathcal{B}}$, and $S$ is compact,
then $T$ is compact.
\end{abstract}

\thanks{The authors acknowledge the generous support of Simons Foundation,
via its travel grant. The first author also benefitted from a COR grant of
the UC system. The authors would also like to thank the referee for a
careful reading of the paper, and making valuable suggestions.}

\subjclass[2010]{Primary: 47B60; Secondary: 46B42, 46L05, 46L52, 47L20}

\maketitle

% \newpage

%%%   \noindent {\bf Keywords:}
%%%   Operator space, complete indecomposability, the group $K_1$.

\section{Preliminaries}\label{sec:first}

\subsection{Introduction}\label{ss:intro}

Following \cite[Definition II.1.2]{Sch},
we say that a real Banach space $Z$ is an \emph{ordered Banach space}
(\emph{OBS} for short) if it is equipped with
a positive cone $Z_+$, closed in the norm topology. Throughout, we
assume that $Z_+$ is \emph{proper} (or \emph{pointed}) -- that is,
$Z_+ \cap (-Z_+) = \{0\}$.
% If, in addition, $Z$ is complete, we call
% it an \emph{ordered Banach space} (\emph{OBS}).
The positive cone of an OBS $Z$ is called
\emph{generating} if $Z_+ - Z_+ = Z$. Equivalently (see \cite{An}, \cite{BR}),
there exists $\gen_Z$ (the \emph{generating constant} of $Z$) so that,
for any $z \in Z$, there exist $a, b \in Z_+$ so that $z = a - b$,
and $\max\{\|a\|, \|b\|\} \leq \gen_Z \|z\|$. Abusing the notation slightly,
we call such OBSs generating. %, and use the abbreviation \emph{generalized OBS}.
We say that an OBS $Z$ is \emph{normal} % (\emph{normal OBS} for short)
if there exists $\nor_Z$ (the \emph{normality constant} of $Z$) so that
$\|z\| \leq \nor_Z (\|a\| + \|b\|)$ whenever $a \leq z \leq b$.
By \cite[Section 1.1]{BR} or \cite{An}, $Z$ is normal iff its dual $Z^\star$
is generating, and vice versa.

In the current article we consider the following question.
Suppose $0 \le T \le S$ are operators acting between two ordered
Banach spaces, and $S$ belongs to a certain class of operators (say, compact
or Dunford-Pettis). Does this imply that $T$ belongs to the same class?
This question is usually referred as the \emph{Domination Problem}.
For arbitrary ordered normed spaces, the set-up may be too general
to obtain meaningful results. In the (rather restrictive) setting
of operators between Banach lattices, the Domination Problem 
has been widely investigated (see e.g. \cite{AB:80},
\cite{AB:81}, \cite{DDFr}, \cite{Sp}, \cite{FHT}, \cite{KaSa}, \cite{Wi}).
% The connections between domination and irreducibility (for maps between
% $M_n$ spaces) were studied in \cite{Fa}. In \cite{Mu}, domination of linear
% functionals was used to obtain automatic continuity results.
% Domination of completely positive compact operators has recently
% been investigated \cite{DdP10}.

We concentrate on the non-commutative version of the Domination Problem.
More specifically, we consider the case when the domain and/or
range of the operators involved is either a $C^*$-algebra, its dual or
predual, or a non-commutative function space. We refer the reader to
e.g.~\cite{DDdP93}, or to the survey article \cite{deP}, for the
definition of the latter. Here, we only briefly outline the basic
properties of such spaces.

Suppose a von Neumann algebra $\A$ is equipped with a normal faithful
semi-finite trace $\tau$. An operator $x$ is called \emph{$\tau$-measurable}
if it is (i) closed and densely defined; (ii) affiliated with $\A$, in the sense that
$u x = x u$ for any unitary $u \in \A^\prime$; and (iii) for some $c > 0$,
the spectral projection $\chi_{(c,\infty)}(|x|)$ has finite trace.
% Denote by $\tilde{\A}$ the set of $\tau$-measurable operators.
% closed densely defined operators, affiliated with $\A$.
% Then $\tilde{\A}$ is a  $*$-algebra, equipped with the measure topology. the uniform topology given
% by the following system of neighbourhoods at $0$:  $\{U(\epsilon,\delta), \epsilon, \delta>0 \}$,
%  where $U(\epsilon, \delta)=\{A\in \tilde{\A}:
%  \text{ there exists } P \in \pp(\A), \; \|PA\|<\epsilon,\; \tau(1-P)<\delta\}$  \cite{FK}. 
% Let $\pp(\A) \subset \A$ be the set of all projections in $\A$. It is known that
% the set of $\tau$-measurable operators in $\tilde{\A}$ is a  $*$-algebra,
% equipped with the measure topology: the uniform topology given
% by the following system of neighbourhoods at $0$:  $\{U(\epsilon,\delta), \epsilon, \delta>0 \}$,
%  where $U(\epsilon, \delta)=\{A\in \tilde{\A}: \text{ there exists } P \in \pp(\A), \; \|PA\|<\epsilon,\; \tau(1-P)<\delta\}$  \cite{FK}.  
% Moreover, we can define the \emph{generalized singular value function}:
On the set $\tilde{\A}$ of $\tau$-measurable operators,
we define the \emph{generalized singular value function}:
for $x \in \A$ and $t \geq 0$,
$\mu_x(t) = \inf\{\|xe\| : e \in \pp(\A), \tau(e^\perp) \leq t\}$
(see e.g. \cite{deP}, \cite{FK} for other formulae for $\mu_x( \cdot )$).
Here and below, $\pp(\A)$ stands for the set of all projections in $\A$.

Now suppose $\ce$ is a linear subset of $\tilde{\A}$, complete in its
own norm $\| \cdot \|_{\ce}$. We say that $\ce$ is an \emph{non-commutative
function space} if:
\begin{enumerate}
\item
$L_1(\tau) \cap \A \subset \ce \subset L_1(\tau) + \A$.
\item
For any $x \in \ce$ and $a, b \in \A$, we have $axb \in \ce$,
and $\|a x b\|_\ce \leq \|a\| \|x\|_\ce \|b\|$.
\end{enumerate}
$\ce$ is called \emph{symmetric} if, whenever $x \in \ce$,
$y \in \tilde{\A}$, and $\mu_y \leq \mu_x$, then $y \in \ce$, with
$\|y\|_\ce \leq \|x\|_\ce$.
Following \cite{DdP12}, we say that $\ce$ is \emph{strongly symmetric}
if, in addition, for any $x, y \in \ce$, with $y \pprec x$, we have
$\|y\|_\ce \leq \|x\|_\ce$. Here, $\pprec$ refers
to the \emph{Hardy-Littlewood domination}: for any $\alpha > 0$,
$\int_0^\alpha \mu_y(t) \, dt \leq \int_0^\alpha \mu_x(t) \, dt$.
It is known that, as in the commutative case, $y \pprec x$ iff there exists an
operator $T$, contractive both on $\A$ and $\A_\star = L_1(\tau)$,
so that $y = Tx$ \cite{DDdP92}. We say that $\ce$
is \emph{fully symmetric} if it is strongly symmetric and, for any
$x \in \ce$ and $y \in \tilde{A}$, we have $y \in \ce$ whenever
$y \pprec x$.

% We often restrict our attention to the order continuous case.
A non-commutative function space is said to be {\emph{order continuous}}
if, for any sequence $x_n \downarrow 0$, we have $\lim_n \|x_n\| = 0$.
Emulating the proof of \cite[Proposition 1.a.8]{LT2}, one proves that this
is equivalent to requiring that, for any net $x_\alpha \downarrow 0$,
$\lim_\alpha \|x_\alpha\| = 0$.

Note that, if  $-a \le b \le a$ for $a,b \in \tilde{\A}$, then $\mu_b \leq \mu_a$.
Indeed,  pick $t \in \R$ and $\lambda > \mu_a(t)$. Set $e = \chi_{[0,\lambda]}(a)$.
Then $\tau(e^\perp) \leq t$. Furthermore, $eae \geq ebe \geq -eae$, hence
$\mu_b(t) \leq \|ebe \| \leq \|eae\| \leq \lambda$. Taking the infimum over $\lambda$,
we obtain $\mu_b \leq \mu_a$.

Consequently, if $a,b \in \ce$ satisfy $-a \leq b \leq a$, then $\|b\| \le \|a\|$.
Therefore, $\ce$ is normal with constant $2$. It is also easy to see that $\ce$ is
generating with constant $2$. Consequently, the duals of $\ce$ of all orders
are both generating and normal.

Many symmetric non-commutative function spaces arise from their commutative
analogues. Indeed, suppose $\tau$ is a normal faithful semi-finite trace
on a von Neumann algebra $\A$. It is known that  if $\A$ has no atomic projections,
then the range of $\tau$ (denoted by $\Omega = \Omega_\tau$) is $[0,\tau(\one)]$
(with $\tau(\one) < \infty$), or $[0,\infty)$. On the other hand, if $\A$ is atomic
(that is, any projection has a minimal subprojection), then $ \Omega_\tau$ is either
$\{0,1,\ldots,n\}$ or $\Z_+ = \{0,1,2,\ldots\}$.  Suppose
$\ce$ is a symmetric function space (in the sense of e.g. \cite{KPS}) on $\Omega$.
We can define the corresponding non-commutative function space $\ce(\tau)$,
consisting of those $x \in \tilde{A}$ for which the norm
$\|x\|_{\ce(\tau)} = \|\mu_x\|_\ce$ is finite. By \cite{KS}, this procedure
yields a Banach space. It is well known
(see e.g. \cite{DDdP93}, \cite{DdP12}, \cite{deP}) that many properties of the
function space $\ce$ (for instance, being reflexive or order continuous)
pass to the non-commutative space $\ce(\tau)$. 

In the discrete case ($\ce$ is a symmetric sequence space on $\N$, and
$\tau$ is the canonical trace on $B(H)$), the construction above
produces a \emph{non-commutative symmetric sequence space}
(often referred to as a \emph{Schatten space}),
denoted by $\se(H)$ (instead of $\ce(\tau)$). % $\se(H) = \ce(\tau)$.
When $H = \ell_2$ ($H = \ell_2^n)$, we write $\se$ (resp.~$\se^n$)
instead of $\se(H)$. For properties of Schatten spaces, the reader is
referred to e.g.~\cite{GK}, \cite{Si}.
We must note that any separable symmetric non-commutative sequence
space arises from a sequence space \cite[Section III.6]{GK}.

Note that a symmetric function (or sequence) space is separable iff it is
order continuous. Indeed, symmetric function spaces are order complete,
and, for such spaces, separability implies order continuity
\cite[Proposition 1.a.7]{LT2}.
% (a more general result is proved in \cite{DdP11}).
On the other hand, it is well
known that any non-negative function is a limit (a.e.) of an increasing
sequence of simple functions. Thus, by \cite[Theorem II.4.8]{KPS}, any
order continuous symmetric function space is separable. Furthermore,
by \cite[Theorem II.4.10 and its Corollary]{KPS}, such spaces are fully symmetric
(equivalently, they are interpolation spaces between $L_1$ and $L_\infty$).
Some non-commutative generalizations of these results are contained
in \cite{DdP11}.

Surprisingly, the non-commutative Domination Problem has attracted
little attention so far.
The connections between domination and irreducibility (for maps between
von Neumann algebras) were studied in \cite{Fa}. In \cite{Mu}, domination of linear
functionals on Banach $*$-algebras was used to obtain automatic continuity results.
Domination of completely positive compact operators has recently
been investigated in \cite{DdP10}.

The paper is structured as follows. First (Section \ref{sec:first}), we prove some
preliminary results about the properties of positive operators, order intervals,
and positive solids. In Subsection \ref{ss:compact}, we establish some
basic facts about non-commutative function spaces. In Subsection \ref{ss:intervals},
we investigate compact $C^*$-algebras, characterizing them in terms of compactness
of order intervals. We also show that a $C^*$-algebra is compact iff it is
hereditary in its enveloping algebra. Subsection \ref{ss:psp} deals with the
positive analogues of the Schur Property. In Subsection \ref{ss:psp predual},
we study compactness of order intervals in preduals of von Neumann algebras.

Our main results are contained in Section \ref{sec:main}. In Subsection
\ref{ss:schatten_main}, we investigate whether an operator to or from a
non-commutative function space, dominated by a compact operator, must
itself be compact. Subsection \ref{ss:C*alg} is devoted to the same question
for $C^*$-algebras. In Subsection \ref{ss:mult_ops}, we consider domination
by compact multiplication operators on $C^*$-algebras. In Subsection \ref{ss:DP_mult},
we tackle domination properties of Dunford-Pettis Schur multipliers.
Subsection \ref{ss:WC} is devoted to the domination properties of
weakly compact operators.

% We close the paper by pointing
% out the limits of usefulness of domination theorems.
Other classes of operators are considered in Section \ref{sec:misc}.
In Subsection \ref{ss:positive},
we show that complete positivity and decomposability are not preserved under
domination. Subsection \ref{ss:op_sys} demonstrates that operator systems
have too little structure to meaningfully consider domination.

Throughout the paper, we use standard Banach space results and notation.
If $a$ is a (closed densely defined) operator, $a^*$ refers to the
adjoint of $a$. The same notation is used in preduals of von Neumann algebras.
If $E$ is a Banach space, $E^\star$ refers to its dual. Similar notation
is used for the predual, and for the conjugate of an operator
between Banach spaces. $\ball(E)$
stands for the unit ball of $E$. If $S$ is a subset of an ordered Banach
space, we denote by $S_+$ the intersection of $S$ with the positive cone.
We denote by $\ce^\times$ the \emph{ K\"othe dual} of a non-commutative
symmetric function space $\ce$  (see e.g. \cite{DDdP93}, \cite{deP} for the
definition and the basic properties of K\"othe duals).

% \section{Schatten spaces}\label{part:schatten}

\subsection{Compactness and positivity in Schatten spaces}\label{ss:compact}

% Note that, if $T : \se \to \is_{\mathcal{F}}$ is positive, then $T(x^*) = T(x)^*$.
To work with Schatten spaces, we need to introduce some notation.
Denote the canonical basis in $\ell_2$ by $(e_k)$.
Let $P_n$ be the orthogonal projection onto
$\span[e_1, \ldots, e_n]$, and $P_n^\perp = \one - P_n$. For convenience, set $P_0 = 0$.
If $\ce$ is a non-commutative symmetric sequence space, let $Q_n$ be
the projection on $\ce$, defined via $Q_n x = P_n x P_n$. Similarly,
let $R_n x = P_n^\perp x P_n^\perp$.
% For possible future use, we note that $\|(Q_n + R_n) x\| \leq \|x\|$.

\begin{lemma}\label{lem:maps from S_E}
Suppose $\ce$ is a non-commutative symmetric sequence space on $B(\ell_2)$,
$Z$ is an ordered normed space, and
$T : \ce \to Z$ is a positive operator. Then, for any $x \in \ce_+$,
$\|T(x - R_n x - Q_n x)\|^2 \leq 16 \nor_Z \|T(Q_n x)\| \|T(R_n x)\|$, where
% $C_Z$ is the constant from \eqref{eq:def_normal OBS}.
$\nor_Z$ is the normality constant of $Z$.
If $Z$ is a non-commutative symmetric function space, then
$\|T(x - R_n x - Q_n x)\|^2 \leq 4 \|T(Q_n x)\| \|T(R_n x)\|$.
\end{lemma}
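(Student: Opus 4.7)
The key identity is
\[
x - Q_n x - R_n x = P_n x P_n^\perp + P_n^\perp x P_n;
\]
write this common value as $y$, the off-diagonal part of $x$ relative to $\one = P_n + P_n^\perp$. My plan is to order-bound $\pm y$ by a one-parameter family of positive combinations of $Q_n x$ and $R_n x$, push this sandwich through the positive map $T$, invoke normality (or the stronger inequality available when $Z$ is a non-commutative symmetric function space), and finish with AM--GM in the free parameter.

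For each $s > 0$ set $V_s^{\pm} = s^{1/2} P_n \pm s^{-1/2} P_n^\perp$. Each $V_s^{\pm}$ is self-adjoint, bounded, and invertible, so conjugating the positive element $x$ by $V_s^{\pm}$ preserves positivity:
\[
0 \le V_s^{\pm}\, x\, V_s^{\pm} = s\, Q_n x + s^{-1} R_n x \pm y.
\]
Rearranging gives the sandwich
\[
-\bigl(s\, Q_n x + s^{-1} R_n x\bigr) \le y \le s\, Q_n x + s^{-1} R_n x,
\]
and applying the positive operator $T$ preserves it:
\[
-\bigl(s\, T(Q_n x) + s^{-1} T(R_n x)\bigr) \le T y \le s\, T(Q_n x) + s^{-1} T(R_n x).
\]

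When $Z$ is a non-commutative symmetric function space, the observation recorded immediately before the lemma (that $-a \le b \le a$ forces $\|b\| \le \|a\|$) applies to $Z$ itself and yields
\[
\|Ty\| \le s\, \|T(Q_n x)\| + s^{-1}\, \|T(R_n x)\|.
\]
For a general ordered normed $Z$ exactly the same estimate persists, up to a multiplicative factor controlled by $\nor_Z$, by applying the normality of $Z$ to the displayed sandwich.

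It remains to minimize $s\, \|T(Q_n x)\| + s^{-1}\, \|T(R_n x)\|$ in $s > 0$: AM--GM delivers the minimum $2\sqrt{\|T(Q_n x)\|\, \|T(R_n x)\|}$, attained at $s = \sqrt{\|T(R_n x)\|/\|T(Q_n x)\|}$. The degenerate cases $\|T(Q_n x)\| = 0$ or $\|T(R_n x)\| = 0$ are handled by letting $s \to \infty$ or $s \to 0^+$, which forces $\|Ty\| = 0$ and makes the desired inequality trivial. Squaring yields the advertised bounds. The main conceptual obstacle is simply spotting the sandwich; once one realizes that conjugating $x$ by $V_s^{\pm}$ is the right manoeuvre, the rest is routine bookkeeping on the normality constant and a one-line AM--GM optimization.
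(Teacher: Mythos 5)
Your argument is correct and is essentially the paper's own proof: the authors conjugate $x$ by $U(t)=tP_n+t^{-1}P_n^\perp$ and $V(t)=tP_n-t^{-1}P_n^\perp$ (your $V_s^{\pm}$ with $s=t^2$) to get the same order sandwich $-a(t)\le x-Q_nx-R_nx\le a(t)$ with $a(t)=t^2Q_nx+t^{-2}R_nx$, apply the positive $T$, invoke normality (or the norm monotonicity of symmetric spaces), and optimize over the parameter. The only difference is cosmetic reparametrization and your slightly looser bookkeeping of the exact power of $\nor_Z$ in the final constant.
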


\begin{proof}
For $t \in \R \backslash \{0\}$, consider $U(t) = t P_n + t^{-1} P_n^\perp$, and
$V(t) = t P_n - t^{-1} P_n^\perp$. These operators are self-adjoint and invertible,
hence $x(t) = U(t) x U(t)$ and $y(t) = V(t) x V(t)$ are positive elements of $\ce$.
An elementary calculation shows that $x(t) = t^2 Q_n x + t^{-2} R_n x + (x - Q_n x - R_n x)$,
and $y(t) = t^2 Q_n x + t^{-2} R_n x - (x - Q_n x - R_n x)$.
Let $a(t) = t^2 Q_n x + t^{-2} R_n x$,
and $b = x - Q_n x - R_n x$. By the above, $-a(t) \leq b \leq a(t)$. Therefore, for any $t$,
$$
\frac{1}{2\nor_Z} \|Tb\| \leq \|T a(t)\| \leq t^2 \|T Q_n x\| + t^{-2} \|T R_n x\| .
$$
Taking $t = \|T R_n x\|^{1/4}/\|T Q_n x\|^{1/4}$, we obtain the desired inequality.
If, in addition, $Z$ is a non-commutative symmetric function space, then $\|Tb\| \leq \|T a(t)\|$.
\end{proof}

\begin{corollary}\label{cor:maps from S_E}
Suppose $\ce$ is a non-commutative symmetric sequence space on $B(\ell_2)$,
$Z$ is a normal OBS, and $T : \ce \to Z$ is a positive operator. Then
$$
\|T (I - Q_n)\| \leq
 \|T R_n\| + 16 { \nor_Z}^{1/2} \|T R_n\|^{1/2} \|T Q_n\|^{1/2} .
$$
If $Z$ is a non-commutative symmetric function space, then
$\|T (I - Q_n)\| \leq \|T R_n\| + 8 { \nor_Z}^{1/2} \|T R_n\|^{1/2} \|T Q_n\|^{1/2}$.
\end{corollary}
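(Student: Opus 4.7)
The plan is to bound $\|T(I-Q_n)x\|$ for $x$ in the unit ball of $\ce$ by isolating the $R_n$-piece and applying Lemma \ref{lem:maps from S_E} to the remainder after a positive decomposition of $x$.

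First, I write the identity $(I-Q_n)x = R_n x + (x - Q_n x - R_n x)$, valid for every $x \in \ce$. The first summand contributes at most $\|T R_n x\| \leq \|T R_n\|\, \|x\|_\ce \leq \|T R_n\|$ whenever $\|x\|_\ce \leq 1$, since $T R_n$ is a bounded operator on $\ce$. The task thus reduces to bounding the ``cross term'' $T(x - Q_n x - R_n x)$, to which Lemma \ref{lem:maps from S_E} applies only when $x$ is positive.

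Second, since $\ce$ is generating with constant $2$ (as noted in Subsection \ref{ss:intro}), any $x \in \ce$ with $\|x\|_\ce \leq 1$ decomposes as $x = a - b$ with $a, b \in \ce_+$ and $\|a\|_\ce, \|b\|_\ce \leq 2$. For a positive $y \in \ce$ with $\|y\|_\ce \leq 2$, Lemma \ref{lem:maps from S_E} combined with the contractivity estimates $\|Q_n y\|_\ce, \|R_n y\|_\ce \leq \|y\|_\ce$ gives
$$
\|T(y - Q_n y - R_n y)\| \leq 4 \nor_Z^{1/2} (\|T Q_n y\|\,\|T R_n y\|)^{1/2} \leq 8 \nor_Z^{1/2} (\|T Q_n\|\,\|T R_n\|)^{1/2}.
$$
Since the cross term is linear in $x$, summing the $a$- and $b$-contributions produces
$$
\|T(x - Q_n x - R_n x)\| \leq 16 \nor_Z^{1/2} (\|T Q_n\|\,\|T R_n\|)^{1/2}.
$$
Combining with the $R_n$ summand and taking the supremum over $\|x\|_\ce \leq 1$ yields the first asserted inequality.

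The non-commutative symmetric function space case proceeds identically, substituting the sharper bound of Lemma \ref{lem:maps from S_E} (factor $4$ in place of $16 \nor_Z$, hence factor $2$ in place of $4\nor_Z^{1/2}$ after the square root); tracing the same decomposition then yields the factor $8$. The only bookkeeping subtlety is the multiplicative factor $4$ that arises when passing from positive $x$ to arbitrary $x$: one factor of $2$ from each of the norm bounds $\|a\|_\ce, \|b\|_\ce \leq 2$ (linearity of the bound in $\|y\|_\ce$), and a further factor of $2$ from summing the $a$- and $b$-contributions.
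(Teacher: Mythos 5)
Your argument follows the paper's proof almost exactly: the same splitting $(I-Q_n)x = R_nx + (x - Q_nx - R_nx)$, the triangle inequality for the $R_n$ piece, Lemma \ref{lem:maps from S_E} on positive elements, and a decomposition into positives to reach the whole unit ball. The one step that does not survive scrutiny as written is the decomposition ``$x = a - b$ with $a, b \in \ce_+$'' for \emph{arbitrary} $x$ in the unit ball: a difference of two positive operators is self-adjoint, so this two-term decomposition exists only for self-adjoint $x$, whereas $\|T(I-Q_n)\|$ is a supremum over the full (complex) unit ball of $\ce$. The paper instead uses a polarization argument, i.e.\ $x = x_1 - x_2 + i(x_3 - x_4)$ with $x_j \in \ce_+$ and $\|x_j\| \leq \|x\|$, which produces a factor of $4$ from four summands of norm at most $1$; your bookkeeping produces the same factor of $4$ from two summands of norm at most $2$, so the final constants ($16\nor_Z^{1/2}$, resp.\ $8$) are unchanged once the decomposition is repaired. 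This is a minor slip rather than a structural flaw, but you should replace the generating-constant decomposition by the four-term polarization (or restrict first to self-adjoint $x$ and justify why that suffices).
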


\begin{proof}
% Write $\|T (I - Q_n)\| \leq \|T R_n\| + \|T(I - R_n - Q_n)\|$.
We prove the corollary for general $Z$ (the case of $Z$ being a non-commutative function
space follows with minor modifications).
Lemma \ref{lem:maps from S_E} shows that, for $x \geq 0$,
$$\|T(I - R_n - Q_n) x\| \leq 4 \nor_Z^{1/2} \|T R_n\|^{1/2} \|T Q_n\|^{1/2} \|x\|.$$
A polarization argument implies
$\|T(I - R_n - Q_n)\| \leq 16 \nor_Z^{1/2} \|T R_n\|^{1/2} \|T Q_n\|^{1/2}$.
% Finally, by
By the triangle inequality,
$\|T (I - Q_n)\| \leq \|T R_n\| + \|T(I - R_n - Q_n)\|$.
% For the sake of brevity, set $c_Q = \|T Q_n\|$, and $c_R = \|T R_n\|$.
% Show that, if a norm one self-adjoint $x \in \se$ satisfies $R_n x = Q_n x = 0$,
% then $\|T x\| < 2{ \nor_Z}^{1/2} c_R^{1/2} c_Q^{1/2}$. In this case,
% $x = y + y^*$, where $y = P_n x P_n^\perp$ has norm $1$.
% Write $y = u a$, where $a = (y^* y)^{1/2}$ is a positive element of $Q_n(\se)$, and $u$
% is an isometry from $\ran P_n$ into $\ran P_n^\perp$. Then
% $$z = a + y + y^* + u a u^* = (a^{1/2} + u a^{1/2}) (a^{1/2} + u a^{1/2})^* \geq 0 ,$$
% and therefore, by Lemma~\ref{lem:maps from S_E},
% $$  \|T x\| = \|T(z - Q_n z - R_n z)\| \leq
%  { \nor_Z}^{\frac{1}{2}} \|T a\|^{1/2} \|T (u a u^*)\|^{1/2}
%  \leq 2 { \nor_Z}^{1/2} c_R^{1/2} c_Q^{1/2} .  $$
% Now consider an arbitrary norm one $x$ satisfying $R_n x = Q_n x = 0$.
% Write $x = x_1 + i x_2$, where $x_1 = (x + x^*)/2$ and $x_2 = (x - x^*)/(2i)$
% are self-adjoint, and have norms not exceeding $1$. By the above,
% $\|Tx\| \leq \|Tx_1\| + \|T x_2\| \leq 4 { \nor_Z}^{1/2} c_R^{1/2} c_Q^{1/2}$.
\end{proof}

For future use, we need to quote a result from \cite[Section 2]{CS}.

\begin{lemma}\label{lem:ord cont}
Suppose $\tau$ is a normal faithful semi-finite trace on a von Neumann
algebra $\A$, and a strongly symmetric non-commutative function space $\ce$
is order continuous.
Suppose, furthermore, that $x$ is an element of $\A$, and a sequence
of projections $p_n \in \A$ decreases to $0$ in the strong operator topology.
Then $\lim_n \|x p_n\| = \lim_n \|p_n x\| = \lim_n \|p_n x p_n\| = 0$.
% $\lim_n \|x p_n\| = 0$.
\end{lemma}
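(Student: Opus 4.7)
The plan is to reduce all three limits to a single decreasing positive sequence and then invoke order continuity of $\ce$. First, by the bimodule property $\|p_nxp_n\|_\ce \leq \|xp_n\|_\ce$, and by the symmetry $\|p_nx\|_\ce = \|(p_nx)^*\|_\ce = \|x^*p_n\|_\ce$ (valid since $\ce$ is symmetric), it suffices to prove $\lim_n \|xp_n\|_\ce = 0$ for every $x \in \A$; applying this conclusion to $x^*$ in place of $x$ handles $\|p_nx\|_\ce$. Polar decomposing $x = u|x|$ and using $\mu_{xp_n} \leq \mu_{|x|p_n}$, we may further assume $x \geq 0$.

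The key sequence is $y_n := x^{1/2}p_nx^{1/2}$. Since $(p_n)$ is a decreasing sequence of projections, $y_n - y_{n+1} = x^{1/2}(p_n - p_{n+1})x^{1/2} \geq 0$, so $(y_n)$ is operator-decreasing; since $\langle y_n\xi,\xi\rangle = \|p_nx^{1/2}\xi\|^2 \to 0$ for every $\xi$, the infimum of $(y_n)$ in $\tilde{\A}$ is $0$. Because $0 \leq y_n \leq x^{1/2}\one x^{1/2} = x$, the $y_n$ lie in $\ce$ as soon as the norms in the conclusion are finite, and order continuity delivers $\|y_n\|_\ce \to 0$. Observing that $p_nxp_n = a^*a$ and $y_n = aa^*$ for $a = x^{1/2}p_n$, the two operators share a singular value function, so $\|p_nxp_n\|_\ce = \|y_n\|_\ce \to 0$, which yields the third limit.

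For the first limit, the operator inequality $x^2 \leq \|x\|_\A \, x$ (valid since $0 \leq x \leq \|x\|_\A\one$) gives $|xp_n|^2 = p_nx^2p_n \leq \|x\|_\A\,p_nxp_n$, whence $\mu_{xp_n}(t)^2 \leq \|x\|_\A\,\mu_{y_n}(t)$ pointwise. The sequence $\mu_{y_n}$ is pointwise decreasing (because $(y_n)$ is operator-decreasing and positive), and its pointwise limit must vanish: otherwise $\lim_n \mu_{y_n}(t_0) =: c > 0$ for some $t_0$ would give $\mu_{y_n} \geq c\,\chi_{[0,t_0]}$ and hence $\|y_n\|_\ce \geq c\,\|\chi_{[0,t_0]}\|_\ce > 0$, contradicting $\|y_n\|_\ce \to 0$. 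Thus $\mu_{xp_n}(t) \to 0$ for each $t > 0$. Combined with the uniform domination $\mu_{xp_n} \leq \mu_{xp_1}$ (from $xp_n = (xp_1)p_n$ together with $\mu_{ab} \leq \|b\|_\A\,\mu_a$), the dominated convergence principle in the underlying order-continuous commutative symmetric space forces $\|xp_n\|_\ce \to 0$. The principal obstacle is precisely this last transition from pointwise convergence of singular value functions to $\ce$-norm convergence, which is handled by dominated convergence in order-continuous symmetric function spaces and uses the order continuity hypothesis in an essential way.
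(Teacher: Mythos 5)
The paper gives no proof of this lemma: it is explicitly quoted from \cite[Section 2]{CS}, so there is no internal argument to compare against. Your reconstruction is correct and follows the standard route one would expect to find in that reference. The reductions (to $\|xp_n\|_\ce$ via $\|p_nxp_n\|_\ce\le\|xp_n\|_\ce$ and $\mu_a=\mu_{a^*}$, then to $x\ge 0$ by polar decomposition) are sound; the identity $\mu_{p_nxp_n}=\mu_{y_n}$ for $y_n=x^{1/2}p_nx^{1/2}$, combined with $y_n\downarrow 0$ and $0\le y_n\le x$, is exactly the right way to bring order continuity to bear; and the passage from $\mu_{xp_n}^2\le\|x\|_{\A}\,\mu_{y_n}$ to $\|xp_n\|_\ce\to 0$ via pointwise decay of singular value functions plus dominated convergence is the standard mechanism. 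Two small caveats, neither of which is a substantive gap. First, as you implicitly note, the statement only makes sense when $xp_1\in\ce$; in the paper's sole application $x\in\se\subset\A$, so the printed hypothesis ``$x\in\A$'' should be read as $x\in\ce\cap\A$, and your argument needs both halves ($x$ bounded for $x^2\le\|x\|_{\A}x$ and $x^{1/2}\in\A$; $xp_1\in\ce$ for the majorant). Second, the lower bound $\|y_n\|_\ce\ge c\,\|\chi_{[0,t_0]}\|_\ce$ and the appeal to ``the underlying commutative symmetric space'' tacitly assume $\ce=\ce(\tau)$ arises from a symmetric function space on the half-line; for a general strongly symmetric $\ce$ one should instead compare $y_n$ with $c$ times a projection of trace at most $t_0$ (available by semifiniteness, and in $\ce$ by symmetry), and invoke the non-commutative dominated convergence theorem (convergence in measure plus a common majorant in an order continuous symmetric space implies norm convergence, as in \cite{CS} or \cite{DDdP93}). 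These are presentational adjustments; the proof itself is sound.
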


% Furthermore, by \cite[Section 3]{DDdP93}, $\se$ is order continuous iff
% $\ce$ is order continuous.

% \begin{proof}
% By polar decomposition and scaling, we may assume that $x \in \ce_+$.
% For every $n$, we have $p_n x p_n \in \ce$ (due to the module properties
% of $\ce$). Furthermore, $x^{1/2} p_n x^{1/2} \leq x$, hence, by
% symmetry, $x^{1/2} p_n x^{1/2} \in \ce$. Moreover,
% $\mu_{p_n x p_n} = \mu_{x^{1/2} p_n}^2 = \mu_{x^{1/2} p_n x^{1/2}}$, hence
% $\|p_n x p_n\| = \|x^{1/2} p_n x^{1/2}\|$.

% For every $\xi$ in the domain of $x^{1/2}$,
% $\langle \xi x^{1/2} p_n x^{1/2}, \xi \rangle \to 0$, or in other words,
% $x^{1/2} p_n x^{1/2} \downarrow 0$ (cf. \cite[Proposition 1.3]{DDdP93}).
% By the order continuity of $\ce$, $\lim_n \|x^{1/2} p_n x^{1/2}\| = 0$.
% \end{proof}

Specializing to Schatten spaces, we obtain:

% For Schatten spaces, the above lemma leads to the following:

\begin{corollary}\label{cor:ord cont}
Suppose $\ce$ is an order continuous % non-commutative
symmetric sequence space. % on $B(\ell_2)$.
Then, for every $x \in \se$, $\lim_n \|x - Q_n x\| = 0$.
\end{corollary}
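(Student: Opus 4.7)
The plan is to reduce the corollary to a direct application of Lemma \ref{lem:ord cont}, by algebraically splitting $x - Q_n x$ into pieces in which $P_n^\perp$ acts on $x$ from the left or the right.

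I would start by decomposing $x$ using $\one = P_n + P_n^\perp$. Expanding $x = (P_n + P_n^\perp) x (P_n + P_n^\perp)$ and subtracting $Q_n x = P_n x P_n$ yields $x - Q_n x = P_n^\perp x + P_n x P_n^\perp$. The ideal property of $\se$ (axiom (2) in the definition of a non-commutative function space), together with $\|P_n\| = 1$, then gives $\|x - Q_n x\|_\se \leq \|P_n^\perp x\|_\se + \|x P_n^\perp\|_\se$. So it suffices to show both norms tend to $0$, which is exactly the content of Lemma \ref{lem:ord cont} applied with $p_n = P_n^\perp$, provided the hypotheses of that lemma are in place.

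Verifying those hypotheses is the next step. The projections $P_n^\perp$ decrease strongly to $0$ since $P_n \uparrow \one$ in the strong operator topology. The lemma also requires $x$ to belong to the ambient von Neumann algebra $\A = B(\ell_2)$; this is automatic, because trace-class operators are bounded, so $L_1(\tau) + B(\ell_2) = B(\ell_2)$, and the general containment $\se \subseteq L_1(\tau) + \A$ forces $\se \subseteq B(\ell_2)$. Finally, the lemma needs $\se$ to be order continuous and strongly symmetric. Order continuity transfers from $\ce$ to $\se$ by the property-transfer principle stated in the preliminaries. For strong symmetry, I would use that an order continuous symmetric sequence space $\ce$ is fully symmetric by \cite[Theorem II.4.10]{KPS}; coupled with $\|x\|_\se = \|\mu_x\|_\ce$, any Hardy-Littlewood comparison $y \pprec x$ on $\tilde{\A}$ passes to the commutative inequality $\mu_y \pprec \mu_x$, and full symmetry of $\ce$ then gives $\|y\|_\se \leq \|x\|_\se$.

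With all hypotheses verified, Lemma \ref{lem:ord cont} yields $\|P_n^\perp x\|_\se \to 0$ and $\|x P_n^\perp\|_\se \to 0$, and the triangle inequality above closes the argument. I do not foresee any substantive obstacle here; the content of the corollary is entirely carried by Lemma \ref{lem:ord cont}, and the remaining work is the bookkeeping needed to check that the Schatten framework fits the lemma's hypotheses — in particular, that every element of $\se$ is automatically bounded and that $\se$ inherits strong symmetry from $\ce$.
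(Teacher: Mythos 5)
Your proof is correct and follows essentially the same route as the paper: decompose $x - Q_n x$ into terms in which $P_n^\perp$ hits $x$ on one side, apply the triangle inequality, and invoke Lemma \ref{lem:ord cont} together with the transfer of order continuity from $\ce$ to $\se$. The paper's version is terser (it does not spell out the verification that $\se \subseteq B(\ell_2)$ or the strong symmetry hypothesis), but the substance is identical.
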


\begin{proof}
By \cite[Section 3]{DDdP93}, $\se$ is order continuous iff $\ce$
is order continuous. It suffices to show that, for $x \in \ball(\se)_+$,
and $\vr \in (0,1)$, $\|x - Q_n x\| < \vr$
for $n$ sufficiently large. 
This follows from  the estimate
$\|x-Q_n x\|=\|P_n^\perp xP_n+xP_n^\perp\| \le \|P_n^\perp x\| +\|xP_n^\perp\|$
and Lemma \ref{lem:ord cont}.
\end{proof}

\begin{lemma}\label{lem:diagonal}
Suppose $\ce$ is an order continuous symmetric % non-commutative
sequence space, not containing $\ell_1$, and $S : \se \to Z$ is compact
($Z$ is a Banach space). Then $\lim_n \|S|_{R_n(\se)}\| = 0$.
\end{lemma}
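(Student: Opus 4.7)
The plan is a proof by contradiction. If the conclusion fails, one can extract $\vr > 0$, a sequence $n_k \to \infty$, and elements $x_k \in \ball(R_{n_k}(\se))$ with $\|Sx_k\| > \vr$ for every $k$. My goal will be to show that $x_k \to 0$ \emph{weakly} in $\se$; since a compact operator sends weakly convergent sequences to norm convergent ones, this would force $\|Sx_k\| \to 0$ and contradict the construction. Rosenthal's theorem is lurking in the background (and motivates the hypothesis ``$\ce$ does not contain $\ell_1$''), but I would not need to invoke it directly: a short computation based on the trace duality gives the weak nullity immediately.

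The computation goes as follows. Because $\ce$ is order continuous, so is $\se$ (by the correspondence recalled before Corollary~\ref{cor:ord cont}), and the Banach dual $\se^\star$ can be identified with $\se_{\ce^\times}$ via the trace pairing (see \cite{DDdP93}). For any $y \in \se_{\ce^\times}$, the key cyclic manipulation
\[
|\tr(x_k y)| \, = \, |\tr(P_{n_k}^\perp x_k P_{n_k}^\perp \, y)| \, = \, |\tr(x_k \cdot R_{n_k} y)| \, \leq \, \|x_k\|_\ce \, \|R_{n_k} y\|_{\ce^\times} \, \leq \, \|R_{n_k} y\|_{\ce^\times}
\]
reduces the problem to showing $\|R_n y\|_{\ce^\times} \to 0$ for each fixed $y \in \se_{\ce^\times}$. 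This is precisely what Corollary~\ref{cor:ord cont} delivers, \emph{provided} it is applied with $\ce^\times$ in place of $\ce$, i.e., provided $\ce^\times$ is itself order continuous.

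The only real obstacle, then, is to justify this last point, and it is exactly here that the hypothesis ``$\ce$ does not contain $\ell_1$'' gets used. For an order continuous symmetric sequence space $\ce$, the norm dual coincides with the K\"othe dual $\ce^\times$; and a standard Banach lattice / K\"othe-space result says that, in this setting, $\ce^\times$ is order continuous if and only if $\ce$ contains no isomorphic copy of $\ell_1$ (equivalently, $\ce^\times$ contains no copy of $\ell_\infty$). I would cite this either from \cite{KPS} or from \cite{LT2}. A sanity check for the necessity of this hypothesis is $\ce = \ell_1$: then $\se = S_1$, $\ce^\times = \ell_\infty$ is not order continuous, and for $x_k = e_{n_k+1,n_k+1} \in R_{n_k}(S_1)$ the trace functional $y = \one \in B(H) = \se^\star$ gives $\tr(x_k y) = 1 \not\to 0$, so the weak nullity step genuinely breaks down. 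Modulo the cited structural fact about $\ce^\times$, the rest of the proof is the three-line computation displayed above.
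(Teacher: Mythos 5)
Your proof is correct, and it takes a genuinely different route from the paper's. The paper also argues by contradiction, but instead of dualizing it extracts (using Corollary \ref{cor:ord cont} and a standard approximation) a normalized \emph{block-diagonal} sequence $x_k=(P_{n_k}-P_{n_{k-1}})x_k(P_{n_k}-P_{n_{k-1}})$ with $\|Sx_k\|>c$, passes to a subsequence on which $(Sx_{k_i})$ converges in norm, and compares Ces\`aro means: $N^{-1}\|\sum_{i=1}^N Sx_{k_i}\|$ stays bounded away from $0$, while $N^{-1}\|\sum_{i=1}^N x_{k_i}\|\to 0$ because a disjoint normalized sequence in a symmetric space not containing $\ell_1$ cannot be equivalent to the unit vector basis of $\ell_1$. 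You instead prove that the offending vectors $x_k\in\ball(R_{n_k}(\se))$ are weakly null and invoke the complete continuity of compact operators. Both arguments use the hypotheses in the same two places -- order continuity of $\ce$ to truncate, and non-containment of $\ell_1$ to kill the tail -- but you channel the latter through the equivalence ``$\ce^\star=\ce^\times$ is order continuous iff $\ell_1\not\subset\ce$,'' which is legitimate: Proposition \ref{prop:l_1_S_E_cont} upgrades an isomorphic copy of $\ell_1$ to a lattice copy, after which the standard Banach-lattice fact that $E^\star$ has order continuous norm iff $\ell_1$ does not lattice embed in $E$ (see \cite{M-N}) applies. What your approach buys is a sharper isolation of where each hypothesis is used and a reusable statement (any bounded sequence supported in the corners $R_{n_k}(\se)$ is weakly null once $\se^\star$ is order continuous); what it costs is the full non-commutative K\"othe duality of \cite{DDdP93} (the identification $\se^\star=\is_{\ce^\times}$ under trace pairing), which the paper's more hands-on block argument avoids. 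Two details worth making explicit in a final write-up: Corollary \ref{cor:ord cont} literally gives $\|y-Q_ny\|\to0$, from which $\|R_ny\|\to0$ follows via $R_ny=R_n(y-Q_ny)$ and $\|R_n\|\le1$; and $\|S|_{R_n(\se)}\|$ is non-increasing in $n$ (since $R_{n+1}(\se)\subset R_n(\se)$), so its failure to vanish yields a uniform $\vr$.
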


\begin{proof}
Suppose not. By Corollary \ref{cor:ord cont}, we have $\lim_n \|(I - Q_n) x\| = 0$.
A standard approximation argument yields a sequence
$0 = n_0 < n_2 < \ldots$ with the property that for each $k$
there exists $x_k \in \se$, so that $\|x_k\| = 1$, and
$(P_{n_k} - P_{n_{k-1}}) x_k (P_{n_k} - P_{n_{k-1}}) = x_k$, and
$\|S x_k\| > c > 0$. By compactness, the sequence $(S x_k)$ must have a convergent
subsequence $(S x_{k_i})$. Then $\lim_N N^{-1} \|\sum_{i=1}^N S x_{k_i}\| > 0$, while
$\lim_N N^{-1} \|\sum_{i=1}^N x_{k_i}\| = 0$.
\end{proof}

Next we describe the Schatten spaces not containing $\ell_1$.

\begin{proposition}\label{prop:l_1_S_E_cont}
Let $\ce$ be a separable symmetric sequence space.
For any infinite-dimensional Hilbert space $H$,
the following are equivalent:
\begin{enumerate}
\item $\ce$ contains a copy of $\ell_1$.
\item $\ce$ contains a lattice copy of $\ell_1$ positively complemented.
\item $\se(H)$ contains a positively complemented copy of $\ell_1$ spanned
by a disjoint positive sequence.
\item $\se(H)$ contains a copy of $\ell_1$.
\end{enumerate}
\end{proposition}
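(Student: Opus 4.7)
The implications $(2) \Rightarrow (1)$ and $(3) \Rightarrow (4)$ are immediate from the definitions, so my plan is to close the cycle by proving $(1) \Rightarrow (2)$, $(2) \Rightarrow (3)$, and $(4) \Rightarrow (1)$.

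For $(1) \Rightarrow (2)$, I would invoke the classical fact (see e.g.\ \cite{LT2}) that in a Banach lattice with order continuous norm, any isomorphic copy of $\ell_1$ can be replaced by a \emph{disjoint} lattice copy. Since $\ce$ is a separable symmetric sequence space it is order continuous, so this produces a disjoint sequence $(g_n) \subset \ce$ equivalent to the $\ell_1$-basis; replacing $g_n$ by $|g_n|$ yields a positive disjoint $\ell_1$-sequence $(f_n) \subset \ce_+$. Positive complementation is obtained by choosing positive functionals $f_n^\star \in \ce^\star$ supported on the pairwise disjoint bands generated by $f_n$ with $f_n^\star(f_n) = 1$ (which exist by Hahn--Banach applied on each band), so that $x \mapsto \sum_n f_n^\star(x) f_n$ is a positive bounded projection onto $\overline{\span}(f_n)$.

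For $(2) \Rightarrow (3)$, fix an orthonormal basis $(e_k)$ of $H$ and consider the diagonal embedding $\iota : \ce \to \se(H)$ sending a sequence $(x_k)$ to the operator $\sum_k x_k (e_k \otimes e_k)$. By the very definition of $\se(H)$ via the singular value function, $\iota$ is a positive isometry. The conditional expectation $\mathcal{D}: \se(H) \to \iota(\ce)$ onto the diagonal is positive and contractive on the symmetric norm, as it is a pointwise limit of averages of unitary conjugations by diagonal unitaries (each of which acts as an isometry on $\se(H)$). Transporting $(f_n)$ along $\iota$ gives a positive sequence $(\iota(f_n))$ in $\se(H)$ with pairwise orthogonal left and right supports, and composing $\mathcal{D}$ with the positive projection from the previous step yields a positive projection onto $\overline{\span}(\iota(f_n))$.

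For $(4) \Rightarrow (1)$, I would argue the contrapositive via reflexivity. If $\ce$ does not contain $\ell_1$, then since $\ce$ is order continuous, its unit vector basis $(e_k)$ is both boundedly complete (by order continuity) and shrinking (a non-shrinking symmetric basis would yield an $\ell_1$-block in $\ce$); hence $\ce$ is reflexive. By the reflexivity transfer established in \cite{DDdP93}, $\se(H)$ is then reflexive, and in particular contains no copy of $\ell_1$. The main obstacle in this proof is step $(1) \Rightarrow (2)$: one must extract simultaneously a \emph{disjoint} $\ell_1$-copy and its positive complementation from an abstract isomorphic copy, which hinges on the classical disjointification theorem for order continuous Banach lattices; the remaining steps reduce to carefully combining the diagonal positive isometric embedding of $\ce$ into $\se(H)$ with the conditional expectation onto the diagonal and with known reflexivity transfer.
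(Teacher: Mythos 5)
Your overall architecture (trivial implications plus $(1)\Rightarrow(2)\Rightarrow(3)$ and $(4)\Rightarrow(1)$) matches the paper, and your $(2)\Rightarrow(3)$ via the diagonal embedding and the conditional expectation onto the diagonal is exactly the paper's argument. However, your proof of $(4)\Rightarrow(1)$ contains a genuine error. You claim that if $\ce$ does not contain $\ell_1$ then its unit vector basis is boundedly complete ``by order continuity,'' hence $\ce$ is reflexive. This is false: $\ce = c_0$ is a separable (hence order continuous) symmetric sequence space that contains no copy of $\ell_1$, yet its unit vector basis is not boundedly complete and $c_0$ is not reflexive. Order continuity controls decreasing nets, not increasing bounded ones; bounded completeness of an unconditional basis is equivalent to the absence of $c_0$, which you have not ruled out. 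So the reflexivity-transfer route collapses precisely in the case $\ce = c_0$ (where the implication is still true, since $\se(H) = K(H)$ contains no $\ell_1$, but your argument does not show it). The paper avoids this entirely by quoting Arazy's embedding theorem \cite[Corollary 3.2]{Ar}, which gives $(4)\Rightarrow(1)$ directly for all $\ce$.

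There is also a soft spot in your $(1)\Rightarrow(2)$. Producing a disjoint positive sequence $(f_n)$ equivalent to the $\ell_1$-basis is fine (the paper does this by a gliding hump and taking moduli), but your positive projection $x \mapsto \sum_n f_n^\star(x) f_n$ is not automatically bounded: with $f_n^\star = h_n \circ P_{B_n}$ for arbitrary Hahn--Banach choices $h_n$ on the bands $B_n$, boundedness requires $\sup_N \bigl\| \sum_{n \leq N} f_n^\star \bigr\| < \infty$, i.e.\ a lower $\ell_1$-estimate for \emph{all} disjoint elements supported in the $B_n$, not just for the specific sequence $(f_n)$; this can fail for careless choices of $h_n$ (e.g.\ in Lorentz sequence spaces $d(w,1)$, where blocks spanning $\ell_1$ sit inside bands whose individual elements satisfy no uniform lower $\ell_1$-estimate). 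The correct statement --- that a closed sublattice lattice-isomorphic to $\ell_1$ is the range of a positive projection --- is true but requires a more careful construction; the paper simply cites \cite[Theorem 2.3.11]{M-N} for it, and you should do the same rather than rely on the naive band-functional argument.
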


\begin{proof}
The implications (2) $\Rightarrow$ (1) and
(3)  $\Rightarrow$ (4) are trivial.
To show (2) $\Rightarrow$ (3), observe that $\se(H)$ contains $\ce$
as a diagonal subspace, which is positively complemented.
(4) $\Rightarrow$ (1) follows directly from \cite[Corollary 3.2]{Ar}.
To prove (1) $\Rightarrow$ (2), apply a
``gliding hump'' argument to show that $\ce$ contains disjoint vectors
$(x_i)$, equivalent to the canonical basis of $\ell_1$.
Then $X = \span[|x_i| : i \in \N]$ is a sublattice of $\ce$,
lattice isomorphic to $\ell_1$. By \cite[Theorem 2.3.11]{M-N},
$X$ is positively complemented.
\end{proof}

For a subset $M \subset  X_{+}$ ($X$ is an OBS), define 
the \emph{positive solid} of $M$:
 $$\psol(M)=\{ x \in X_{+},\text{ such that } 0\le x \le y \text{ and } y \in M\}.$$

\begin{lemma}\label{lem:psol comp}
If $\ce$ is an order continuous non-commutative symmetric sequence space,
% $H$ is a Hilbert space, and $M \subset \se(H)$
and $M \subset \ce$ is relatively compact, then $\psol(M)$ is relatively compact.
In particular, any order interval in an order continuous non-commu\-ta\-tive symmetric
sequence space is compact.
% If $\ce$ is an order continuous non-commutative strongly symmetric sequence space,
% $H$ is a Hilbert space, and $M \subset \se(H)$ is relatively compact,
% then $\psol(M)$ is relatively compact.
\end{lemma}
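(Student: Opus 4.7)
The plan is to show that the projections $Q_n$ approximate the identity \emph{uniformly} on $\psol(M)$, whence $\psol(M)$ is totally bounded, because $Q_n(\psol(M))$ is a bounded subset of the finite-dimensional space $P_n B(\ell_2) P_n$ and hence relatively compact. First, $\psol(M)$ is norm-bounded: setting $K = \sup_{x \in M} \|x\|_\ce < \infty$, for $0 \le y \le x$ with $x \in M$ we have $-x \le 0 \le y \le x$, so $\|y\|_\ce \le \|x\|_\ce \le K$ by the remark preceding the discussion of normality in the preliminaries.

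The core estimate comes from Lemma~\ref{lem:maps from S_E} applied to the positive identity map $\mathrm{id} : \ce \to \ce$; since $\ce$ itself is a non-commutative symmetric function space, the sharper form of that lemma applies and yields, for any $y \geq 0$,
$$
\|y - Q_n y - R_n y\|_\ce \le 2\, \|Q_n y\|_\ce^{1/2}\, \|R_n y\|_\ce^{1/2},
$$
and hence, by the triangle inequality,
$$
\|y - Q_n y\|_\ce \le \|R_n y\|_\ce + 2\, \|Q_n y\|_\ce^{1/2}\, \|R_n y\|_\ce^{1/2}.
$$
Now I exploit $0 \le y \le x$: since $x - y \ge 0$, conjugation by the projections $P_n$ and $P_n^\perp$ preserves positivity, so $0 \le Q_n y \le Q_n x$ and $0 \le R_n y \le R_n x$. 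Consequently $\|Q_n y\|_\ce \le \|Q_n x\|_\ce \le \|x\|_\ce \le K$ and $\|R_n y\|_\ce \le \|R_n x\|_\ce$, giving
$$
\sup_{y \in \psol(M)} \|y - Q_n y\|_\ce \le \sup_{x \in M} \|R_n x\|_\ce + 2 K^{1/2}\, \sup_{x \in M} \|R_n x\|_\ce^{1/2}.
$$

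For each individual $x \in \se$ one has $\|R_n x\|_\ce = \|R_n(x - Q_n x)\|_\ce \le \|x - Q_n x\|_\ce \to 0$ by Corollary~\ref{cor:ord cont}; since the $R_n$ are contractions on $\ce$ and $M$ is relatively compact, a standard $\vr$-net argument upgrades pointwise convergence on $M$ to uniform convergence. Therefore $Q_n$ approximates the identity uniformly on $\psol(M)$, and relative compactness of $\psol(M)$ follows. For the ``in particular'' statement, any order interval equals $[0,x] = \psol(\{x\})$; being closed and relatively compact, it is compact.

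The main obstacle to avoid is the seemingly natural attempt to bound one-sided quantities such as $\|y P_n^\perp\|_\ce$ directly by $\|x P_n^\perp\|_\ce$; this approach fails because $0 \le y \le x$ does not imply $y^2 \le x^2$. Lemma~\ref{lem:maps from S_E} sidesteps the issue by reducing the approximation error to the symmetric \emph{corner} terms $Q_n y$ and $R_n y$, which involve two-sided conjugation by projections and therefore \emph{do} inherit the correct order bounds from $x$.
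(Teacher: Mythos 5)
Your argument is correct and is essentially the paper's own proof: both rest on the two-sided estimate $\|y - Q_n y - R_n y\| \le 2\|Q_n y\|^{1/2}\|R_n y\|^{1/2}$ from the conjugation trick of Lemma~\ref{lem:maps from S_E}, the observation that $0 \le y \le x$ forces $0 \le R_n y \le R_n x$, and the uniform smallness of $R_n$ on the relatively compact set $M$ (the paper's Lemma~\ref{aprox}, which is your $\vr$-net step). The only point you omit is the preliminary reduction to $B(\ell_2)$ when the underlying Hilbert space is non-separable (the paper's Lemma~\ref{lem:sep}); since the projections $P_n, Q_n, R_n$ are only defined there, this routine reduction should be stated before the rest of the argument applies.
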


For the proof, we need two technical results.

\begin{lemma}\label{lem:sep}
Suppose $\ce$ and $M$ are as in Lemma \ref{lem:psol comp}. Then there exists a
projection $p$ with separable range, so that $M = pMp$.
\end{lemma}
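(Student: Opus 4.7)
The plan is to let $p$ be the join of ``left--right support projections'' coming from a countable dense subset of $M$. Since $M$ is relatively compact in the Banach space $\ce$, it is separable, so fix a countable dense subset $\{x_n\}_{n\in\N} \subset M$. Order continuity of $\ce$ forces $\mu_{x_n}(k) \to 0$ as $k \to \infty$: otherwise a nonzero constant sequence would sit below $\mu_{x_n}$ in the symmetric sequence space underlying $\ce$, yet a constant tail of a nonzero vector cannot shrink in norm, contradicting order continuity. Consequently every $x_n$ is a compact operator on $H$.

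For each $n$, let $e_n$ be the orthogonal projection of $H$ onto $\overline{\Range(x_n)}$ and $f_n$ that onto $\overline{\Range(x_n^*)}$. Both ranges are separable (a compact operator is a norm limit of finite-rank maps), and $e_n x_n = x_n = x_n f_n$, so the projection $p_n := e_n \vee f_n$ has separable range and satisfies $p_n x_n p_n = x_n$.

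Now set $p := \bigvee_{n\in\N} p_n$, the orthogonal projection onto the closed linear span of $\bigcup_n p_n(H)$. This range is the closed linear span of a countable union of separable subspaces, hence itself separable. Since $p \geq p_n$ we have $p x_n p = x_n$ for every $n$; and because axiom~(2) of a non-commutative function space makes the two-sided multiplication $y \mapsto p y p$ a contraction on $\ce$, density of $\{x_n\}$ in $M$ upgrades this to $pxp = x$ for every $x \in M$. This yields $M = pMp$.

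The only delicate point is producing a separable-range projection $p_n$ that fixes $x_n$ on both sides; once the two-sided support projection is handled via the join $e_n \vee f_n$, taking suprema and passing to the norm closure are routine.
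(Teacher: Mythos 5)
Your argument is correct and is essentially the paper's proof: both pass to a countable dense subset of the relatively compact set $M$, observe that its elements are compact operators, take for each one a separable-range projection fixing it under two-sided compression, and then take the join. You simply fill in the details the paper leaves implicit (why the elements are compact, the explicit construction $p_n = e_n \vee f_n$, and the density/contractivity argument extending $pxp=x$ from the dense subset to all of $M$).
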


\begin{proof}
The set $M$ must contain a countable dense subset $S$. The elements
of $M$ are compact operators, hence, for any $x \in S$, there exists
a projection $p_x$ with separable range, so that $p_x x p_x = x$.
Then $p = \vee_{x \in S} p_x$ has the desired properties.
\end{proof}

\begin{lemma} \label{aprox}
Suppose $\ce$ is an order continuous non-commutative symmetric
sequence space on $B(\ell_2)$, and $M$ is relatively compact
subset of $\ce$. % be separable.
Then $\lim_n \|R_n|_{M}\| = 0$.
\end{lemma}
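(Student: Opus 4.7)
The plan is to establish pointwise convergence $\|R_n x\|_\ce \to 0$ for each individual $x \in \ce$, and then upgrade this to uniform convergence on the relatively compact set $M$ by a standard $\vr$-net argument.

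For the pointwise step, I would first note that each $R_n$ is a contraction on $\ce$: property (2) in the definition of a non-commutative function space yields
$$\|R_n x\|_\ce = \|P_n^\perp x P_n^\perp\|_\ce \leq \|P_n^\perp\|^2 \|x\|_\ce = \|x\|_\ce.$$
In the Schatten setting, $\A = B(\ell_2)$, and the built-in inclusion $\ce \subset L_1(\tau) + \A$ forces $\ce \subseteq B(\ell_2) = \A$, so every $x \in \ce$ is already an element of $\A$. Since $p_n = P_n^\perp$ decreases to $0$ in the strong operator topology, Lemma \ref{lem:ord cont} (applicable because $\ce$ is order continuous, and hence, as noted earlier in the paper, automatically fully symmetric) yields $\|P_n^\perp x P_n^\perp\|_\ce \to 0$ for each fixed $x \in \ce$.

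To promote this to a uniform estimate on $M$, fix $\vr > 0$ and cover $M$ by finitely many $\ce$-balls of radius $\vr$ centered at points $x_1, \ldots, x_k \in M$. Choose $N$ large enough that $\|R_n x_i\|_\ce < \vr$ for all $i \leq k$ and $n \geq N$. For arbitrary $x \in M$, picking $i$ with $\|x - x_i\|_\ce < \vr$ and using contractivity of $R_n$ gives
$$\|R_n x\|_\ce \leq \|R_n (x - x_i)\|_\ce + \|R_n x_i\|_\ce < \vr + \vr = 2 \vr,$$
so $\|R_n|_M\| \leq 2\vr$ for $n \geq N$, and we conclude $\|R_n|_M\| \to 0$.

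The argument is largely routine; there is no real obstacle. The only substantive input is Lemma \ref{lem:ord cont}, for which the order continuity hypothesis is indispensable. Lemma \ref{lem:sep} is not actually needed in the present discrete situation because $B(\ell_2)$ already acts on a separable Hilbert space; it would only become relevant if one wished to extend the statement to general von Neumann algebras.
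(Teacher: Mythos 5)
Your proposal is correct and follows essentially the same route as the paper: a finite $\vr$-net for the relatively compact set $M$, pointwise convergence $\|R_n x_i\|_\ce \to 0$ for the net points, and contractivity of $R_n$ (from the axiom $\|axb\|_\ce \leq \|a\|\,\|x\|_\ce\,\|b\|$) to pass to the whole of $M$. The paper leaves the pointwise step implicit (it is supplied by Lemma \ref{lem:ord cont} / Corollary \ref{cor:ord cont}), whereas you spell it out; this is only a difference of exposition, not of method.
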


\begin{proof}
For every $\vr>0$  there are $x_1, \ldots, x_k$ in $M$ such that for every $x \in M$
there is an $1\le i \le k$ such that $\|x-x_i\| <\vr/2$.
Pick $N \in \mathbb{N}$ such that $\|R_nx_i\| <\epsilon/2$ for every $n>N$ and
$ 1\le i \le k$.
Hence, $\|R_nx\|  \le \|R_nx_i\| +\|R_n\|\|x-x_i\|<\epsilon$ for every $x \in M$ and $n>N$.
\end{proof}

\begin{proof}[Proof of Lemma \ref{lem:psol comp}]
By Lemma \ref{lem:sep}, we can restrict ourselves to spaces on $B(\ell_2)$.
As $Q_n$ is a finite rank projection, it suffices to show that,
for any $\vr \in (0,1)$,
there exists $n \in \N$ so that $\|(I - Q_n) x\| < \vr$ for any $x \in \psol(M)$.
To this end, write $(I - Q_n)x = (x - Q_n x - R_n x) + R_n x$.
Reasoning as in the proof of Lemma \ref{lem:maps from S_E}, we observe that
$$
- \big( t^2 Q_n x + t^{-2} R_n x \big)
 \leq x - Q_n x - R_n x \leq t^2 Q_n x + t^{-2} R_n x
$$
for any $t > 0$, hence
$\|x - Q_n x - R_n x\| \leq t^2 \|Q_n x\| + t^{-2} \|R_n x\|$.
Taking $t = \|R_n x\|^{1/2}/\|Q_n x\|^{1/2}$, we obtain
$\|x - Q_n x - R_n x\| \leq 2 \|R_n x\|^{1/2} \|Q_n x\|^{1/2}$.

By scaling, we can assume that $\sup_{y \in M} \|y\| = 1$. By Lemma \ref{aprox},
there exists $n \in \N$ so that $\|R_n y\| < \vr^2/16$ for any $y \in M$.
For any $x \in \psol(M)$, there exists $y \in M$ so that $0 \leq x \leq y$, hence
$0 \leq R_n x \leq R_n y$. By the above,
$\|x - Q_n x - R_n x\| \leq 2 \|R_n y\|^{1/2} < \vr/2$, hence
$$
\|(I - Q_n)x\| = \|x - Q_n x - R_n x\| + \|R_n x\| \leq
 \frac{\vr}{2} + \frac{\vr^2}{16} < \vr .
$$
\end{proof}

Recall that if $Z$ is an OBS, and $x \in Z_+$, the \emph{order interval}
$[0,x]$ is the set $\{y \in Z_+ : y \leq x\}$.

\begin{corollary}\label{cor:crit OC}
Suppose $\ce$ is a fully symmetric non-commutative sequence space.
Then $\ce$ is order continuous if and only if any order interval
in $\ce$ is compact.
\end{corollary}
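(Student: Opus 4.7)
The proposition asserts an equivalence, so I would handle the two implications separately.

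For the forward implication, the plan is to invoke Lemma \ref{lem:psol comp} directly: an order interval $[0,x]$ in $\ce$ is nothing but $\psol(\{x\})$, and the singleton $\{x\}$ is trivially relatively compact. Hence, if $\ce$ is order continuous, $[0,x]$ is relatively compact. Since order intervals are easily seen to be closed (the positive cone is closed, and the map $y\mapsto x-y$ is continuous), $[0,x]$ is in fact compact. This part requires no work beyond citing the earlier lemma.

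For the reverse implication, I would argue by contrapositive. Assume $\ce$ is not order continuous; pick a decreasing sequence $x_n\downarrow 0$ with $\|x_n\|\not\to 0$, and (after passing to a subsequence) assume $\|x_n\|\geq c>0$ for all $n$. Since $0\leq x_n\leq x_1$, the entire sequence lives in the order interval $[0,x_1]$. Assuming compactness of $[0,x_1]$, extract a norm-convergent subsequence $x_{n_k}\to y$. The closedness of $\ce_+$ gives $y\geq 0$; moreover, for any fixed $m$, $x_m-x_{n_k}\geq 0$ once $n_k\geq m$, and passing to the norm limit yields $y\leq x_m$. Thus $y$ is a lower bound of the sequence $(x_m)$, and since $x_m\downarrow 0$ means $\inf_m x_m=0$ in the order sense, we get $y\leq 0$, hence $y=0$. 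But $\|y\|=\lim_k\|x_{n_k}\|\geq c>0$, a contradiction.

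The only step requiring slight care is the verification that the norm limit $y$ is a lower bound of the sequence and hence must equal $0$; this uses only the closedness of the positive cone and the definition of $x_n\downarrow 0$. I do not anticipate any serious obstacle: the compactness of $[0,x_1]$ forces a convergent subsequence, and the order-theoretic constraints then squeeze the limit to zero, contradicting the norm lower bound. The full symmetry hypothesis is not actually used in either direction of this short argument; order continuity and the non-commutative symmetric sequence space structure are what the proof exploits through Lemma \ref{lem:psol comp}.
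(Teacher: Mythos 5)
Your proof is correct. The forward implication is exactly the paper's argument: apply Lemma \ref{lem:psol comp} to the singleton $M=\{x\}$ and note that order intervals are closed. For the reverse implication, however, you take a genuinely different and more elementary route. The paper argues via Lemma \ref{lem:+l_infty}: if $\ce$ is not order continuous, full symmetry is used to produce $x\in\ce_+$ and mutually orthogonal projections $e_i$ with $\inf_i\|e_ixe_i\|>0$, giving a positive embedding of $\ell_\infty$; the elements $e_ixe_i$ then form a norm-separated sequence inside $[0,x]$, so that interval is not compact. You instead work directly from the definition of order continuity: take $x_n\downarrow 0$ with $\|x_n\|\geq c>0$, observe the sequence sits in $[0,x_1]$, extract a norm-convergent subsequence, and use closedness of the cone plus $\inf_n x_n=0$ to force the limit to be $0$, contradicting the norm lower bound. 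Your argument is shorter, avoids the structural machinery of Lemma \ref{lem:+l_infty}, and, as you correctly note, does not use full symmetry in the backward direction (it is needed for the paper's route, and ordinary symmetry is still needed in the forward direction through Lemma \ref{lem:psol comp}). What the paper's approach buys in exchange is the stronger structural conclusion --- a positively embedded copy of $\ell_\infty$ --- which is reused elsewhere (e.g.\ in Lemma \ref{lem:dual OC} and Proposition \ref{prop:no OC*}), whereas your argument yields only the non-compactness of the interval.
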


\begin{lemma}\label{lem:+l_infty}
Suppose $\ce$ is a fully symmetric non-commutative function or
sequence space, which is not order continuous. Then there exists
a positive complete isomorphism $j : \ell_\infty \to \ce$.
\end{lemma}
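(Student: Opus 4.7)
The plan is to construct mutually orthogonal projections $(p_n)_{n \ge 1}$ in the von Neumann algebra $\A$ underlying $\ce$, each in $\ce$, with $\inf_n \|p_n\|_\ce > 0$ and $p := \sum_n p_n \in \ce$ (SOT-sum in $\A$), and then define $j : \ell_\infty \to \ce$ by $j((\alpha_n)) = \sum_n \alpha_n p_n$. This will be the required embedding: the inequality $|\sum_n \alpha_n p_n| = \sum_n |\alpha_n| p_n \le \|(\alpha_n)\|_\infty \, p$ yields $\|j((\alpha_n))\|_\ce \le \|(\alpha_n)\|_\infty \|p\|_\ce$, while $|\alpha_k| p_k \le |\sum_n \alpha_n p_n|$ yields $\|j((\alpha_n))\|_\ce \ge (\inf_k \|p_k\|_\ce) \|(\alpha_n)\|_\infty$. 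Moreover, $j$ is a $\ast$-homomorphism from $\ell_\infty$ into the commutative von Neumann subalgebra of $\A$ generated by $(p_n)$, hence completely contractive with completely contractive inverse on its image, giving the ``complete'' part.

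To construct such projections I first reduce to the commutative setting. Since $\ce$ is fully symmetric, $\|x\|_\ce$ depends only on $\mu_x$, so $\ce$ identifies isometrically with $E(\tau)$ for a fully symmetric commutative function (or sequence) space $E$ on $\Omega_\tau$, where $\|f\|_E := \|x\|_\ce$ whenever $\mu_x = f^*$. By hypothesis, choose $(x_n) \subset \ce_+$ with $x_n \downarrow 0$ and $\inf_n \|x_n\|_\ce \ge c > 0$, and set $f_n := \mu_{x_n}$. The monotonicity $0 \le x_{n+1} \le x_n$ gives $\mu_{x_{n+1}} \le \mu_{x_n}$ pointwise, and $x_n \downarrow 0$ in $\tilde{\A}$ forces $\mu_{x_n}(t) \to 0$ for every $t > 0$ (a standard fact about singular values; see e.g.~\cite{FK}). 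Thus $f_n \downarrow 0$ in $E_+$ with $\|f_n\|_E \ge c$, so $E$ itself is not order continuous.

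Now apply the classical Lozanovskii/Meyer--Nieberg theorem: since $E$ is $\sigma$-Dedekind complete (any symmetric function space is) and not order continuous, there exist pairwise disjoint measurable sets $B_n \subseteq \Omega_\tau$ of finite positive measure with $\inf_n \|\chi_{B_n}\|_E > 0$ and $\sum_n \chi_{B_n} \in E$, the span of which is lattice isomorphic to $\ell_\infty$ inside $E$ (see \cite{M-N}). Fix a maximal abelian $\ast$-subalgebra $M \subseteq \A$ whose trace restriction $\tau|_M$ corresponds to the canonical measure on $\Omega_\tau$; such an $M$ exists by the standard structure theory of semi-finite von Neumann algebras. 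Under the identification $M \cong L_\infty(\Omega_\tau)$, the $\chi_{B_n}$ correspond to mutually orthogonal projections $p_n \in M \subset \A$ with $\tau(p_n) = \nu(B_n)$ and $\mu_{p_n} = \chi_{[0,\nu(B_n))}$. Since $(p_n)$ and their linear combinations lie in the commutative subalgebra $M$, singular value computations reduce to decreasing rearrangements in $L_\infty(\Omega_\tau)$, yielding $\|p_n\|_\ce = \|\chi_{B_n}\|_E$ and $\|\sum_n \alpha_n p_n\|_\ce = \|\sum_n \alpha_n \chi_{B_n}\|_E$; this is the family required for $j$.

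The main obstacle lies in the commutative reduction: verifying that $E$ inherits full symmetry from $\ce$ (routine, using the characterization of $\pprec$ via contractions on $\A \cap L_1(\tau)$) and, more delicately, that $\mu_{x_n} \downarrow 0$ pointwise when $x_n \downarrow 0$ in $\tilde{\A}$. The existence of a MASA whose trace range matches $\Omega_\tau$ is standard but deserves some care when $\A$ decomposes into summands of different type (atomic vs.~continuous, finite vs.~semi-finite).
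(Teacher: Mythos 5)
Your overall strategy---produce mutually orthogonal projections $p_n\in\ce$ with $\inf_n\|p_n\|_\ce>0$ and $\sum_n p_n\in\ce$, and let $j$ be the diagonal map---is close in spirit to the paper's argument, which instead takes a single $x\in\ce_+$ of non-absolutely-continuous norm together with mutually orthogonal projections $e_i\in\A$ satisfying $\inf_i\|e_ixe_i\|>0$ (quoting the analysis in \cite{DdP12}), and sets $j((\alpha_i))=\sum_i\alpha_i e_ixe_i$; there the contractivity of $y\mapsto\sum_ie_iye_i$ on $\A$ and on $\A_\star$ gives $\sum_ie_ixe_i\pprec x$, full symmetry gives membership in $\ce$ and the upper estimate, and compression by $e_i$ gives the lower one. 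However, your route to the projections contains a genuine error: the assertion that $x_n\downarrow 0$ in $\tilde{\A}$ forces $\mu_{x_n}(t)\to 0$ for every $t>0$ is false. Take $\A=L_\infty[0,\infty)$ with Lebesgue trace and $\ce=\A$ (viewed inside $L_1+\A$), and $x_n=\chi_{[n,\infty)}$; or take $\A=B(\ell_2)$ and $x_n=P_n^\perp$. Then $x_n\downarrow 0$ while $\mu_{x_n}\equiv 1$ for all $n$. Order convergence to $0$ does not imply convergence to $0$ in measure when $\tau(\one)=\infty$, and this is precisely the regime where order continuity fails ``at infinity.'' So your deduction that the commutative space $E$ fails order continuity does not go through as written; the conclusion may still be recoverable from the known equivalence of order continuity for $E$ and $E(\tau)$, but that is a separate theorem, not your argument.

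Two further steps are asserted without justification. The Lozanovskii/Meyer--Nieberg theorem yields a disjoint sequence $0\le y_n\le y$ in $E_+$ with $\inf_n\|y_n\|>0$; it does not yield indicator functions $\chi_{B_n}$ with $\sum_n\chi_{B_n}\in E$, and upgrading disjoint positive elements to indicators with a union of controlled measure requires an argument (this upgrade is forced on you only because you insist on projections rather than general disjoint positive elements of $\ce$). Likewise, the identification of $\ce$ with $E(\tau)$ for a commutative $E$, together with a MASA whose trace realizes every $B_n$, is delicate when $\A$ mixes atomic and diffuse summands or has atoms of unequal trace; the paper records such an identification only for separable symmetric sequence spaces. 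All of these difficulties evaporate if one works, as the paper does, with the compressions $e_ixe_i$ of a single witness $x$: disjointness, membership in $\ce$ via $\pprec$ and full symmetry, and both norm estimates then come essentially for free.
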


\begin{proof}
In the notation of \cite[Section 6]{DdP12}, there exists
$x \in \ce_+ \backslash \ce^{an}$. Moreover, there exists
a sequence of mutually orthogonal projections $e_i \in \A$
($i \in \N$), so that $\inf_i \|e_i x e_i\| > 0$. The map
$y \mapsto \sum_i e_i y e_i$ is contractive in $\A$, and
in its predual, hence $\sum_i e_i y e_i \pprec y$, for any
$y \in \A + \A_\star$. Due to $\A$ being fully symmetric,
$\sum_i e_i x e_i \in \ce$, and $\|\sum_i e_i x e_i\| \leq \|x\|$.
Therefore, the map
$$
j : \ell_\infty \to \ce : (\alpha_i) \mapsto
 (\sum_i \alpha_i e_i) (\sum_i e_i x e_i) = \sum_i \alpha_i e_i x e_i
$$
has the desired properties.
\end{proof}

\begin{proof}[Proof of Corollary \ref{cor:crit OC}]
Note that an order interval $[0,x]$ is closed. If $\ce$
is order continuous, an application of Lemma \ref{lem:psol comp}
to $M = \{x\}$ shows the compactness of $[0,x]$. 
If $\ce$ is not order continuous, then, for $x$ as in
Lemma \ref{lem:+l_infty}, $[0,x]$ is not (relatively) compact.
\end{proof}

% Since $K(H)$ contains no copies of $\ell_1$ (see \cite{Fr}) the following holds.

%\begin{theorem}
%f A is a dual C*-algebra, then for any positive weakly compact subset K of A, $\psol(K)$ is weakly compact.
%\end{theorem}
%\begin{proof}
%Let  $ N = \{x \in A^{\star\star} : \text{ there exists } a \in K, \text{ such that } 0 \leq x \leq a\}$
%\end{proof}

\subsection{Compactness of order intervals in $C^*$-algebras}
\label{ss:intervals}

% In this section, we study the compactness of order intervals.
In this subsection, we investigate the compactness of order intervals
in $C^*$-algebras, and obtain a new description of compact $C^*$-algebras.

First we introduce some definitions.
We say that an element $a$ of a Banach algebra $\A$
is \emph{multiplication compact}
if the map $\A \to \A : b \mapsto aba$ is compact. Combining
\cite{Yl75PAMS}, \cite{Yl75Suomi}, we see that, for
an element $a$ of a $C^*$-algebra $\A$, the following are equivalent:
\begin{enumerate}
\item
$a$ is multiplication compact.
\item
The map $\A \to \A : b \mapsto ab$ is weakly compact.
\item
The map $\A \to \A : b \mapsto ba$ is weakly compact.
\item
The map $\A \to \A : b \mapsto aba$ is weakly compact.
\end{enumerate}
By \cite{Yl72PAMS}, there exists a faithful representation
$\pi : \A \to B(H)$ so that $a$ is multiplication compact iff
$\pi(a)$ is a compact operator on $H$. If, in addition, $\A$ is
an irreducible $C^*$-subalgebra of $B(H)$, then $a \in \A$
is multiplication compact iff $a$ is a compact operator
\cite{Yl68Suomi}.

Suppose $\A$ is a $C^*$-subalgebra of $B(H)$, where $H$ is a Hilbert space.
For $x \in B(H)$ we define an operator $M_x : \A \to B(H) : a \mapsto x^* a x$.

\begin{lemma}\label{lem:M_a}
For an element $a$ of a $C^*$-algebra $\A$, the following are equivalent.
\begin{enumerate}
\item
$a$ is multiplication compact.
\item
The operator $M_a$ is compact.
\item
The operator $M_a$ is weakly compact.
\end{enumerate}
% Suppose $a$ is an element of a $C^*$-algebra $\A$. Then $a$ is
% multiplication compact if and only if the operator $M_a$ is compact.
\end{lemma}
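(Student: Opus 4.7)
The implication (2) $\Rightarrow$ (3) is immediate. My plan is to close the circle by proving (1) $\Rightarrow$ (2) and (3) $\Rightarrow$ (1), both by reducing to the Ylinen characterizations recalled immediately before the lemma (which, for a self-adjoint $c \in \A$, equate $c$ being multiplication compact with $N_c = M_c : \A \to \A$ being compact, and with $N_c$ being weakly compact).

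For (1) $\Rightarrow$ (2), I would pass to the self-adjoint dilation $\tilde a = \begin{pmatrix} 0 & a \\ a^* & 0 \end{pmatrix} \in M_2(\A) \subset B(H \oplus H)$. Because multiplication compact elements form a $*$-closed two-sided ideal, $\tilde a$ is multiplication compact in $M_2(\A)$ iff $a$ is in $\A$. Self-adjointness of $\tilde a$ lets me apply the Ylinen equivalence in $M_2(\A)$ to conclude that $N_{\tilde a} : \tilde b \mapsto \tilde a \tilde b \tilde a$ is compact, and a block multiplication shows
\[
N_{\tilde a}\begin{pmatrix} b & 0 \\ 0 & 0 \end{pmatrix} = \begin{pmatrix} 0 & 0 \\ 0 & a^* b a \end{pmatrix},
\]
so $M_a$ factors as $\pi_{22} \circ N_{\tilde a} \circ \iota_{11}$ through the compact operator $N_{\tilde a}$ via the obvious bounded corner maps, and hence is compact.

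For (3) $\Rightarrow$ (1), I would instead use the polar decomposition $a = u|a|$, with $u \in B(H)$ a partial isometry and $|a| = (a^*a)^{1/2} \in \A_+$. A routine identification gives $|a^*| = au^* = ua^* \in \A_+$, whence for every $b \in \A$,
\[
M_{|a^*|}(b) = |a^*| b |a^*| = (ua^*) b (au^*) = u M_a(b) u^* ,
\]
the identity holding in $B(H)$. Postcomposing the weakly compact map $M_a : \A \to B(H)$ with the bounded operator $c \mapsto u c u^*$ on $B(H)$ then yields that $M_{|a^*|} : \A \to B(H)$ is weakly compact; since its range lies in $\A$ and the weak topology on $\A$ is the restriction of that of $B(H)$, it follows that $M_{|a^*|} = N_{|a^*|} : \A \to \A$ is weakly compact. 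The Ylinen equivalence applied to the positive element $|a^*|$ then gives $|a^*|$ multiplication compact, and hence $a$ is as well. The main subtlety of this step is that the partial isometry $u$ generally lives in $\A^{**}$ rather than $\A$, so the conjugation $c \mapsto u c u^*$ cannot be internalized within $\A$; it is precisely because $M_x$ was defined in the preamble with codomain $B(H)$ that we may absorb $u$ into the ambient algebra and then recover a map into $\A$ by comparing weak topologies.
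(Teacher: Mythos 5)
Your proposal is sound in outline, and the two directions compare differently with the paper's proof. For $(3)\Rightarrow(1)$ you follow essentially the paper's route: polar decomposition, conjugation by the partial isometry $u$ to transfer weak compactness from $M_a$ to $M_c$ with $c=(aa^*)^{1/2}$, and the Ylinen equivalences applied to the positive element $c$; the paper merely runs this inside $\A^{\star\star}$ (where $u$ lives) rather than in $B(H)$. For $(1)\Rightarrow(2)$ your route is genuinely different: the paper obtains compactness of $M_a$ from multiplication compactness of $a$ and $a^*$ by citing the elementary-operator result of \cite{BT}, whereas your $2\times 2$ self-adjoint dilation reduces everything to the definition of multiplication compactness in $M_2(\A)$ and reads $M_a$ off a corner of $\tilde b\mapsto\tilde a\tilde b\tilde a$. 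This buys a self-contained argument using only the equivalences quoted before the lemma, at the price of a matrix-amplification step.

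That step is the first of two places where you assert more than you prove. The ideal property of multiplication compact elements is a statement inside a single algebra; it does not by itself yield that $\tilde a$ is multiplication compact in $M_2(\A)$ when $a$ is multiplication compact in $\A$. The claim is true, but needs an argument: $a^*$ is multiplication compact along with $a$ (via the involution, as in the paper's proof), so each of the four entry maps of $\tilde b\mapsto\tilde a\tilde b$ is weakly compact, hence $\tilde b\mapsto\tilde a\tilde b$ is weakly compact on $M_2(\A)$, and the Ylinen equivalence applied in the $C^*$-algebra $M_2(\A)$ upgrades this to compactness of $\tilde b\mapsto\tilde a\tilde b\tilde a$. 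The second under-justified point is the final ``hence $a$ is as well'' in $(3)\Rightarrow(1)$: since $a=|a^*|u$ with $u$ in $\A^{\star\star}$ but generally not in $\A$, the ideal property again does not apply directly. The paper closes this by factoring $b\mapsto aba$ as $U M_c V$ with $Vb=ub$ and $Ub=bu$ bounded on $\A^{\star\star}$, so that weak compactness of $M_c$ passes to $b\mapsto aba$; alternatively, note that $a=\lim_n |a^*|^{1/n}a$ in norm and each $|a^*|^{1/n}a$ lies in the closed two-sided ideal of multiplication compact elements. With these two points filled in, your proof is complete.
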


\begin{proof}
$(2) \Rightarrow (3)$ is trivial. To show $(1) \Rightarrow (2)$,
recall that $a$ is multiplication compact iff the map
$\A \to \A : b \mapsto ab$ is weakly compact. Passing to the
adjoint, we see that the last statement holds iff the map
$\A \to \A : b \mapsto ba^*$ is weakly compact, or equivalently,
iff $a^*$ is multiplication compact. By \cite{BT}, this implies
the compactness of $M_a$.

To prove $(3) \Rightarrow (1)$, note that % the second dual of
$M_a^{\star\star}$ takes $b \in \A^{\star\star}$ to $a^* b a$.
We identify
$M_a^{\star\star}$ with $M_a$, acting on $\A^{\star\star}$. Write
$a = c u$, where $c = (a a^*)^{1/2}$, and $u$ (respectively, $u^*$)
is a partial isometry from $(\ker a)^\perp = (\ker c)^\perp$ to
$\overline{\ran a} = \overline{\ran c}$ (from 
$\overline{\ran a^*} = \overline{\ran c}$ to
$(\ker a^*)^\perp = (\ker c)^\perp$).
Then $M_a = M_u M_c$, and $M_u$ is an isometry on
$\ran(M_c) \subset \A^{\star\star}$. Writing $M_c = M_u^{-1} M_a$,
we conclude that $M_c$ is weakly compact. However,
$M_c x = cxc$, hence, by the remarks preceding the lemma,
$c$ is multiplication compact.
The operator $S : \A^{\star\star} \to \A^{\star\star} : b \mapsto a b a$ can
be written as $S = U M_c V$, where $V b = u b$ and $U b = b u$.
Then $S$ is weakly compact, and therefore, $a$ is multiplication compact.
\end{proof}

Multiplication compactness of elements of a $C^*$-algebra can be described in
terms of compactness of order intervals.

\begin{proposition}\label{prop:comp_int}
For a positive element $a$ of a $C^*$-algebra $\A$, the following are
equivalent:
\begin{enumerate}
\item
$a$ is multiplication compact.
\item
$a^\alpha$ is multiplication compact for any $\alpha > 0$.
\item
The order interval $[0,a]$ % $ = \{x \in \A : 0 \leq x \leq a\}$
is compact.
\item
The order interval $[0,a]$ % $ = \{x \in \A : 0 \leq x \leq a\}$
is weakly compact.
\end{enumerate}
\end{proposition}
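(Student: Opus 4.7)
My plan is to dispose of the trivial implications $(3) \Rightarrow (4)$ and $(2) \Rightarrow (1)$ (the latter at $\alpha = 1$), establish $(1) \Leftrightarrow (2)$ by transferring to a faithful representation in which $a$ becomes a compact Hilbert space operator, and then close the loop by $(1) \Rightarrow (3)$ and $(4) \Rightarrow (1)$. Both of the non-trivial arrows linking order intervals to multiplication compactness will rest on the Douglas factorization: $0 \le b \le x$ in $B(H)$ precisely when $b = x^{1/2} c x^{1/2}$ for some $c \in B(H)$ with $0 \le c \le \one$.

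For $(1) \Rightarrow (2)$, fix a faithful representation $\pi : \A \to B(H)$ given by the result cited before Lemma \ref{lem:M_a}, so that $\pi(a)$ is a compact operator. Continuous functional calculus (inside the abelian $C^*$-algebra generated by $\pi(a)$) yields $\pi(a^\alpha) = \pi(a)^\alpha$ for every $\alpha > 0$, and a positive compact operator raised to a positive power remains compact: its nonzero eigenvalues $\lambda_n \to 0$ are replaced by $\lambda_n^\alpha \to 0$. Hence $\pi(a^\alpha)$ is compact, so $a^\alpha$ is multiplication compact.

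For $(1) \Rightarrow (3)$, I work inside the same representation. Since $\pi$ is isometric and preserves the cone, $\pi([0,a])$ is a closed subset of $[0,k]$ in $B(H)$, where $k := \pi(a)$. Douglas factorization identifies $[0,k]$ with the image of $\ball(B(H))_+$ under $M_{k^{1/2}} : B(H) \to B(H)$, $c \mapsto k^{1/2} c k^{1/2}$. This map is norm-compact: if $k^{1/2}_N$ denotes the truncation of $k^{1/2}$ to its top $N$ spectral projections, then $M_{k^{1/2}_N}$ has finite-dimensional range (matrices supported in the first $N$ rows and columns), and $\|M_{k^{1/2}} - M_{k^{1/2}_N}\| \le 2 \|k^{1/2}\|\,\|k^{1/2} - k^{1/2}_N\| \to 0$. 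Therefore $[0,k]$ is compact in $B(H)$, so $[0,a]$ is compact in $\A$.

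For $(4) \Rightarrow (1)$, observe that for $b \in \ball(\A)_+$, that is, $0 \le b \le \one$, we have $0 \le a^{1/2} b a^{1/2} \le a$, so $M_{a^{1/2}}(\ball(\A)_+) \subseteq [0,a]$ is weakly relatively compact. Using the real--imaginary and positive--negative decompositions available in any $C^*$-algebra, we write $\ball(\A)$ as a bounded linear combination of elements drawn from $\ball(\A)_+$, and linearity promotes $M_{a^{1/2}} : \A \to \A$ to a weakly compact operator. Lemma \ref{lem:M_a} then yields multiplication compactness of $a^{1/2}$, and $(1) \Rightarrow (2)$ at $\alpha = 2$ passes this to $a = (a^{1/2})^2$. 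The main subtlety is in $(1) \Rightarrow (3)$: the contraction $c$ produced by Douglas factorization lies in the enveloping $B(H)$ rather than in $\A$, so one cannot argue directly inside $\A$ via $M_{a^{1/2}}(\ball(\A)_+)$, but must pass through a faithful representation and exploit the isometry to transfer compactness back.
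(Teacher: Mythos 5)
Your proof is correct, and all but one of the arrows coincide with the paper's: $(2)\Rightarrow(1)$ and $(3)\Rightarrow(4)$ are dismissed the same way; $(1)\Rightarrow(2)$ uses the same faithful representation in which multiplication compactness of $a$ becomes compactness of $\pi(a)$; and $(4)\Rightarrow(1)$ is the paper's argument run forwards rather than by contradiction (both rest on $M_{a^{1/2}}(\ball(\A)_+)\subset[0,a]$, the decomposition of $\ball(\A)$ into four positive parts, and Lemma \ref{lem:M_a}). The genuine divergence is the implication into (3). The paper proves $(2)\Rightarrow(3)$ entirely inside $\A$: by \cite[Lemma I.5.2]{Da}, each $x\in[0,a]$ factors as $x=a^{1/4}u^*ua^{1/4}$ with $u\in\ball(\A)$, so $[0,a]\subset M_{a^{1/4}}(\ball(\A))$, and compactness of the operator $M_{a^{1/4}}$ (via Lemma \ref{lem:M_a}) finishes the argument. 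You instead prove $(1)\Rightarrow(3)$ by transporting everything to the faithful representation, invoking the Douglas factorization $[0,k]=M_{k^{1/2}}\bigl(\ball(B(H))_+\bigr)$ --- whose contraction lives only in $B(H)$, a point you correctly flag as the reason you cannot stay inside $\A$ --- and verifying by hand, via finite-rank spectral truncations of $k^{1/2}$, that $M_{k^{1/2}}$ is a compact operator on $B(H)$; then $\pi([0,a])$ is a closed subset of the compact set $[0,k]$, and you pull compactness back along the isometry. Your route trades the algebra-internal factorization lemma for an elementary, self-contained compactness computation in $B(H)$ (in effect re-deriving Ylinen's observation, cited in the paper as \cite{Yl72PAMS}, that $c\,\ball(B(H))\,c$ is relatively compact when $c$ is compact), whereas the paper's route stays inside $\A$ and reuses machinery (Lemma \ref{lem:M_a}) that it needs elsewhere anyway. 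Both arguments are sound.
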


\begin{proof}
The implications (2) $\Rightarrow$ (1) and (3) $\Rightarrow$ (4)
are immediate. To establish (1) $\Rightarrow$ (2), pick a
faithful representation $\pi$ so that $a$ is multiplication compact
if and only if $\pi(a)$ is compact, and note that the compactness 
of $\pi(a)$ is equivalent to the compactness of
$\pi(a)^\alpha = \pi(a^\alpha)$. 

For (2) $\Rightarrow$ (3), assume $\|a\| = 1$.
By \cite[Lemma I.5.2]{Da},
for any $x \in [0,a]$ there exists $u \in \ball(\A)$,
so that $x^{1/2} = u a^{1/4}$, hence $x = a^{1/4} u^* u a^{1/4}$.
In particular, $[0,a] \subset M_{a^{1/4}}(\ball(\A))$.
If $a$ is multiplication compact, then so is $a^{1/4}$.
Therefore, $[0,a]$ is compact.

To prove (4) $\Rightarrow$ (1), suppose $a$ is not
multiplication compact. Then $a^{1/2}$ is not multiplication
compact, hence $M_{a^{1/2}}(\ball(\A))$ is not relatively compact.
Note that any element $x \in \ball(\A)$ can be written as
$x = x_1 - x_2 + i (x_3 - x_4)$, with $x_1, x_2, x_3, x_4 \in \ball(\A)_+$.
Thus, $M_{a^{1/2}}(\ball(\A)_+)$ is not relatively weakly compact. However,
$[0,a] \supset M_{a^{1/2}}(\ball(\A)_+)$. Indeed, if $0 \leq y \leq 1$,
then $0 \leq a^{1/2} y a^{1/2} \leq a$. Therefore, $[0,a]$
is not relatively weakly compact.
\end{proof}

These results allow us to obtain new characterizations of compact
$C^*$-algebras. Recall that a Banach algebra is called \emph{compact} (or
\emph{dual}) if all of its elements are multiplication compact.
By \cite{Al}, compact $C^*$-algebras are precisely the
algebras of the form $\A = (\sum_{i \in I} K(H_i))_{c_0}$, where
each $H_i$ is a complex Hilbert space, and $K(H)$ denotes the
space of compact operators on $H$.
%  We equip $A^{\star\star} = (\sum_i B(H_i))_\infty$ with the trace
% $\tau = \sum_i \tr_{H_i}$, where $\tr_H$ is the canonical trace on $B(H)$.
% Note that the von Neumann algebra $\A^{\star\star} = (\sum_{i \in I} B(H_i))_\infty$
% is \emph{atomic} (every projection has a minimal Abelian subprojection -- see e.g.
% \cite[Section III.1.4]{Bl}).
Several alternative characterizations of
compact $C^*$-algebras can be found in \cite[4.7.20]{Dix}.

\begin{proposition}\label{prop:dual_alg}
For a $C^*$-algebra $\A$, the following four statements are equivalent.
\begin{enumerate}
\item
$\A$ is compact.
\item
For any $c \in \A_+$, the order interval $[0,c]$ is compact.
\item
For any $c \in \A_+$, the order interval $[0,c]$ is weakly compact.
\item
For any relatively compact $M \subset \A_+$, $\psol(M)$ is relatively compact.
\end{enumerate}
\end{proposition}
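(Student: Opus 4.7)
The plan is to prove a short cycle of implications that leverages Proposition \ref{prop:comp_int} as much as possible. First I would observe that (2)$\Leftrightarrow$(3) and (1)$\Rightarrow$(2) both follow at once from Proposition \ref{prop:comp_int} applied to each $c \in \A_+$ individually, and that (4)$\Rightarrow$(2) is trivial via $\psol(\{c\}) = [0,c]$. For (2)$\Rightarrow$(1), I would argue that the set $\calf$ of multiplication compact elements of $\A$ is a closed two-sided ideal. By the equivalences quoted in the preamble to Subsection \ref{ss:intervals}, $a \in \calf$ iff the left multiplication $L_a : b \mapsto ab$ is weakly compact; since weakly compact operators form a closed two-sided ideal in $B(\A)$, and $L_{a+b} = L_a + L_b$, $L_{ca} = L_c L_a$, $L_{ac} = L_a L_c$, the set $\calf$ is indeed such an ideal. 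Assuming (2), every $c \in \A_+$ belongs to $\calf$ by Proposition \ref{prop:comp_int}, and since any $a \in \A$ is a finite linear combination of positive elements, $\A = \calf$, i.e.\ $\A$ is compact.

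The main step is (1)$\Rightarrow$(4). My strategy is to exploit the inclusion
$$
[0,y] \subset y^{1/4}\,\ball(\A)_+\,y^{1/4} = M_{y^{1/4}}(\ball(\A)_+)
$$
extracted from the proof of Proposition \ref{prop:comp_int} (for $x \in [0,y]$, write $x^{1/2} = u y^{1/4}$ with $u \in \ball(\A)$, so $x = y^{1/4}(u^*u)y^{1/4}$ with $u^*u \in \ball(\A)_+$). For each fixed $y \in M$, Proposition \ref{prop:comp_int} gives multiplication compactness of $y^{1/4}$, so by Lemma \ref{lem:M_a} the operator $M_{y^{1/4}}$ is compact, and hence $M_{y^{1/4}}(\ball(\A))$ is relatively compact in $\A$.

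To make this approximation uniform in $y$, I would use that the continuous functional calculus $y \mapsto y^{1/4}$ is uniformly continuous on norm-bounded subsets of $\A_+$ (since $t \mapsto t^{1/4}$ is uniformly continuous on any compact interval $[0,R]$). This makes $\Phi(y,u) := y^{1/4} u y^{1/4}$ uniformly continuous in $y$, uniformly over $u \in \ball(\A)$, because
$$
\|\Phi(y_1,u) - \Phi(y_2,u)\| \leq (\|y_1^{1/4}\| + \|y_2^{1/4}\|)\,\|u\|\,\|y_1^{1/4} - y_2^{1/4}\|.
$$
Given $\vr > 0$, I would cover the relatively compact set $M$ by finitely many $\delta$-balls centered at $y_1, \ldots, y_k \in M$, with $\delta$ chosen so that $\|y-y_i\|<\delta$ forces $\|\Phi(y,u)-\Phi(y_i,u)\|<\vr$ for every $u \in \ball(\A)$. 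The displayed inclusion then puts $\psol(M)$ into the $\vr$-neighborhood of the relatively compact set $\bigcup_{i=1}^{k} M_{y_i^{1/4}}(\ball(\A))$. This yields total boundedness, and, by completeness of $\A$, relative compactness of $\psol(M)$.

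The principal obstacle is exactly this uniform-in-$y$ control. In a Banach lattice one would approximate $x \in [0,y]$ with $\|y-y_i\|$ small by $x \wedge y_i \in [0,y_i]$ via the clean estimate $\|x - x \wedge y_i\| \leq \|y - y_i\|$; a $C^*$-algebra has no such lattice operation, so one is forced to replace the lattice shortcut by the algebraic parametrization $[0,y] \subset y^{1/4}\,\ball(\A)\,y^{1/4}$ combined with Hölder-type continuity of fractional powers, as above.
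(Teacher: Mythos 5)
Your proposal is correct and follows essentially the same route as the paper: the key implication (1)$\Rightarrow$(4) uses exactly the paper's parametrization $[0,y]\subset y^{1/4}\,\ball(\A)_+\,y^{1/4}$ together with continuity of the fourth root and a finite-net/triangle-inequality argument, and the remaining implications are handled via Proposition \ref{prop:comp_int} just as in the text. The only (minor) divergence is that for (2)$\Rightarrow$(1) you argue directly that the multiplication compact elements form a closed two-sided ideal via weak compactness of left multiplications, where the paper instead cites a result of Bresar--Turovskii; both are valid.
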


\begin{proof}
The implications (4) $\Rightarrow$ (2) $\Rightarrow$ (3) are immediate.
% By Proposition \ref{prop:comp_int}, (1) $\Rightarrow$ (2). 

(3) $\Rightarrow$ (1): by Proposition \ref{prop:comp_int}, any
positive $a \in \A$ is multiplication compact. By
\cite[Corollary 10.4]{BT}, the map $\A \to \A : x \mapsto axb$ 
is compact for any $a, b \in \A_+$. As any $x \in \A$ is a linear
combination of four positive elements, it is multiplication compact.

(1) $\Rightarrow$ (4): it suffices to show that, for any $\vr > 0$,
$\psol(M)$ admits a finite $\vr$-net. Assume, without loss of generality,
that $M \subset \ball(\A)_+$. The map $\A_+ \to \A_+ : a \mapsto a^{1/4}$
is continuous, hence $M^{1/4} = \{a^{1/4} : a \in M\}$ is compact.
Pick $(a_i)_{i=1}^n \subset M$ so that $(a_i^{1/4})_{i=1}^n$ is an
$\vr/4$-net in $M^{1/4}$. By Proposition \ref{prop:comp_int},
$a_i^{1/4}$ is multiplication compact for each $i$, hence
$a_i^{1/4} \ball(\A)_+ a_i^{1/4}$ contains an $\vr/4$-net $(b_{ij})_{j=1}^m$.

Now consider $x \in [0,a]$, for some $a \in M$.
As noted in the proof of Proposition \ref{prop:comp_int}, there exists
$u \in \ball(\A)$, so that $x = a^{1/4} u^* u a^{1/4}$.
Pick $i$ and $j$ so that $\|a^{1/4} - a_i^{1/4}\| < \vr/4$, and
$\|a_i^{1/4} u^* u a_i^{1/4} - b_{ij}\| < \vr/4$. Then
$$   \eqalign{
&
\|a^{1/4} u^* u a^{1/4} - b_{ij}\| \leq
\|(a_i^{1/4} - a^{1/4}) u^* u a^{1/4}\|
\cr
&
+
\|a_i^{1/4} u^* u (a_i^{1/4} - a^{1/4})\| +
\|a_i^{1/4} u^* u a_i^{1/4} - b_{ij}\| < \vr .
}  $$
\end{proof}

% We conclude this subsection with a new characterization of compact $C^*$-algebras.
Recall that 
a $C^*$-subalgebra $\A$ of a $C^*$-algebra ${\mathcal{B}}$ is called
\emph{hereditary} if, for any $a \in \A_+$, we have
$\{b \in {\mathcal{B}} : 0 \leq b \leq a\} \subset \A$.

\begin{proposition}\label{prop:alg_ideal}
A $C^*$-algebra $\A$ is a hereditary subalgebra of $\A^{\star\star}$
if and only if $\A$ is a compact $C^*$-algebra.
\end{proposition}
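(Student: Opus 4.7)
The plan is to use Proposition \ref{prop:comp_int} (compact order intervals characterize multiplication-compact elements) and Proposition \ref{prop:dual_alg} in both directions, together with the fact that $\A^{\star\star}$ is a von Neumann algebra whose weak-$*$ topology restricts on $\A$ to the weak topology $\sigma(\A,\A^\star)$.

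For the direction ``$\A$ compact $\Rightarrow$ $\A$ hereditary in $\A^{\star\star}$,'' pick $a\in\A_+$ and aim to show $[0,a]_{\A^{\star\star}}\subset\A$. Compactness of $\A$ together with Proposition \ref{prop:comp_int} gives that $a^{1/4}$ is multiplication compact, and by Lemma \ref{lem:M_a} the map $M_{a^{1/4}}\colon\A\to\A$ sending $b\mapsto a^{1/4}ba^{1/4}$ is compact, in particular weakly compact. Hence $M_{a^{1/4}}^{\star\star}(\A^{\star\star})\subset\A$. Since multiplication in the enveloping von Neumann algebra $\A^{\star\star}$ is separately weak-$*$ continuous and $a^{1/4}\in\A$, both $M_{a^{1/4}}^{\star\star}$ and the natural map $b\mapsto a^{1/4}ba^{1/4}$ on $\A^{\star\star}$ are weak-$*$ continuous and agree on the weak-$*$-dense subset $\A$, so they coincide. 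Applying \cite[Lemma I.5.2]{Da} inside the $C^*$-algebra $\A^{\star\star}$ (exactly as in the proof of Proposition \ref{prop:comp_int}), every $y\in[0,a]_{\A^{\star\star}}$ has the form $a^{1/4}u^*ua^{1/4}$ for some $u\in\ball(\A^{\star\star})$, and hence lies in $\A$.

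For the converse, assume $\A$ is hereditary in $\A^{\star\star}$; by Proposition \ref{prop:dual_alg} it suffices to show that $[0,a]_{\A}$ is weakly compact for every $a\in\A_+$. The positive cone of the von Neumann algebra $\A^{\star\star}$ is weak-$*$ closed, so $[0,a]_{\A^{\star\star}}=\A^{\star\star}_+\cap(a-\A^{\star\star}_+)$ is weak-$*$ closed and bounded by $\|a\|$, hence weak-$*$ compact by Banach-Alaoglu. The hereditarity assumption gives $[0,a]_{\A^{\star\star}}=[0,a]_{\A}$, and the weak-$*$ topology of $\A^{\star\star}$ restricts on $\A$ to its weak topology, so $[0,a]_{\A}$ is weakly compact. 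Proposition \ref{prop:comp_int} then yields multiplication compactness of $a$, and since $a\in\A_+$ was arbitrary, $\A$ is compact.

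The main technical point is the identification of the biadjoint $M_{a^{1/4}}^{\star\star}$ with the natural multiplication $b\mapsto a^{1/4}ba^{1/4}$ on $\A^{\star\star}$; once that is in place, both implications reduce to applications of Propositions \ref{prop:comp_int} and \ref{prop:dual_alg} combined with standard weak/weak-$*$ duality between $\A$ and $\A^{\star\star}$.
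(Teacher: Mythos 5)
Your proof is correct, but it takes a genuinely different route from the paper's in both directions. For the forward implication the paper simply quotes Ylinen's result that a compact $C^*$-algebra is a (two-sided) ideal in $\A^{\star\star}$ and then uses the standard fact that closed ideals are hereditary; you instead argue directly, factoring $[0,a]_{\A^{\star\star}}$ through the weakly compact map $M_{a^{1/4}}$ and using Gantmacher plus the identification of $M_{a^{1/4}}^{\star\star}$ with $b\mapsto a^{1/4}ba^{1/4}$ (an identification the paper itself uses, without proof, in Lemma \ref{lem:M_a}; your density/normality argument for it is the standard one and is fine, as is the application of \cite[Lemma I.5.2]{Da} inside $\A^{\star\star}$). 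For the converse the paper argues by contradiction through the spectral characterization of \cite[Exercise 4.7.20]{Dix}: a non-isolated positive spectral point produces infinitely many orthogonal spectral projections which hereditarity forces into $\A$, and then $x\mapsto axa$ is weakly compact by Gantmacher yet an isomorphism on the $c_0$ they span. Your converse is more direct and arguably cleaner: order intervals in the bidual are weak-$*$ compact, hereditarity places them inside $\A$ where the weak-$*$ topology restricts to the weak topology, and Proposition \ref{prop:dual_alg}\,(3)$\Rightarrow$(1) finishes. The trade-off is that your argument leans entirely on the machinery of Propositions \ref{prop:comp_int} and \ref{prop:dual_alg} (making it self-contained within the paper and avoiding the external citations to Ylinen and Dixmier), while the paper's converse exposes the spectral picture --- isolated eigenvalues and finite-rank spectral projections --- that underlies compactness of the algebra.
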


\begin{proof}
If $\A$ is compact, then it is an ideal in $\A^{\star\star}$ \cite{Yl75PAMS}.
It is well known (see e.g. \cite[Proposition II.5.3.2]{Bl})
that any ideal in a $C^*$-algebra is hereditary.

Now suppose $\A$ is a hereditary subalgebra of $\A^{\star\star}$.
By \cite[Exercise 4.7.20]{Dix}, it suffices to show that, for any
$a \in \A_+$, any non-zero point of the spectrum of $a$ is an isolated point.
Suppose, for the sake of contradiction, that there exists $a \in \A_+$ whose
spectrum contains a strictly positive non-isolated point $\alpha$. In other
words, for every $\delta > 0$,
$((\alpha - \delta, \alpha) \cup (\alpha, \alpha + \delta))
 \cap \sigma(a) \neq \emptyset$.
Without loss of generality, we can assume $0 \leq a \leq 1$.
Thus, we can find countably many mutually disjoint non-empty subsets $S_i$ of
$(\alpha/2,\infty) \cap \sigma(a)$. Denote the corresponding
spectral projections by $p_i$ (that is, $p_i = \chi_{S_i}(a)$).
These projections belong to $\A^{\star\star}$.
Furthermore, $p_i \leq (\inf S_i)^{-1} a$,
hence, by the hereditary property, these projections belong to $\A$.

Now consider the linear map $T : \A \to \A : x \mapsto axa$. Then $T^{\star\star}$
is also implemented by $x \mapsto axa$. If $0 \leq x \leq \one$, then
$axa \leq a^2$, hence $axa \in \A$. Therefore,
$T^{\star\star}$ takes $\A^{\star\star}$ to $\A$. By Gantmacher's Theorem
(see e.g. \cite[Theorem 5.23]{AB}), $T$ is weakly compact.
However, $T$ is an isomorphism on the copy of
$c_0$, spanned by the projections $p_i$, leading to a contradiction.
\end{proof}

\begin{remark}\label{rem:blecher}
The above result was independently proved in \cite{Ble}, using
a different method.
\end{remark}

\subsection{Positive Schur Property. Compactness of order intervals
 in Schatten spaces} \label{ss:psp}

An OBS $X$ is said to have the \emph{Positive Schur Property} (\emph{PSP})
if every weakly null positive sequence is norm convergent to 0 and $X$ has the
\emph{Super Positive Schur Property} (\emph{SPSP}) if every positive weakly convergent
sequence is norm convergent. Clearly, the Schur Property implies the SPSP,
which, in turn, implies the PSP. Note that, if $X$ has
the SPSP, then, by the Eberlein-Smulian Theorem, any weakly compact subset of $X_+$
is compact.

The PSP and SPSP of Banach lattices have been investigated earlier.
By \cite{Wn89a}, the Schur Property and the PSP coincide for atomic Banach lattices.
In \cite{KM00}, it is shown that $\ell_1$ is the only symmetric sequence space
with the Schur Property (by Remark \ref{rem:not l_1} below, the symmetry assumption
is essential). \cite{KM02} gives a criterion for the PSP of Orlicz spaces.

\begin{lemma} \label{lem:pos_seq}
Suppose $\ce$ is a symmetric sequence space, and
$(A_n)$ is a positive bounded sequence in $\se$ without a convergent
subsequence. Then there exist a subsequence $(A_{n_k})$ and $c>0$ such that
$\|R_kA_{n_k}\|>c$ for every $k$.
\end{lemma}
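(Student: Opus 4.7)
I would argue by contrapositive: suppose that for every subsequence $(A_{n_k})$ and every $c>0$ there exists some $k$ with $\|R_kA_{n_k}\|\leq c$, and I will show that $(A_n)$ is relatively compact in $\se$, contradicting the hypothesis.

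The first step is to reformulate the conclusion equivalently as: \emph{there exists $c>0$ such that for every $k\in\N$ the set $\{n:\|R_kA_n\|>c\}$ is infinite.} The direction ``$\Leftarrow$'' is an inductive choice of $n_k$. For ``$\Rightarrow$'' I would use that $\|R_ky\|_{\se}$ is non-increasing in $k$: writing $R_{k+1}y=P_{k+1}^\perp(R_ky)P_{k+1}^\perp$ exhibits $R_{k+1}y$ as a two-sided compression of $R_ky$, and for any projection $P$ one has $\mu_{PxP}\leq\mu_x$, hence $\|PxP\|\leq\|x\|$ in any symmetric norm. A witness subsequence at level $k$ therefore witnesses every $k_0\leq k$. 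Under negation one thus obtains: for every $c>0$ there is $k=k(c)$ with $\|R_kA_n\|\leq c$ for all but finitely many $n$.

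The key estimate comes from Lemma~\ref{lem:maps from S_E} applied to $T=\mathrm{Id}:\se\to\se$ (positive, with $\se$ a non-commutative symmetric function space, so the sharper constant $4$ applies): for positive $A\in\se$,
\[
\|A-Q_kA-R_kA\|^2\leq 4\|Q_kA\|\,\|R_kA\|\leq 4\|A\|\,\|R_kA\|.
\]
Set $M=\sup_n\|A_n\|<\infty$. Given $\vr>0$, pick $c>0$ with $2\sqrt{Mc}+c<\vr$ and set $k=k(c)$. Then for all sufficiently large $n$,
\[
\|A_n-Q_kA_n\|\leq\|A_n-Q_kA_n-R_kA_n\|+\|R_kA_n\|\leq 2\sqrt{Mc}+c<\vr.
\]

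Finally, $Q_k(\se)$ lies in the finite-dimensional space $P_kB(\ell_2)P_k$, so the bounded family $\{Q_kA_n\}_n$ is totally bounded and admits a finite $\vr$-net; combined with the estimate above and the finitely many initial terms, this gives a finite $2\vr$-net for $\{A_n\}_n$. As $\vr$ was arbitrary, $(A_n)$ is totally bounded in $\se$, hence relatively compact by completeness, a contradiction. The main subtlety lies in the reformulation step: the synchronisation of the index $k$ in $R_k$ with the position of $A_{n_k}$ in the subsequence makes the negation awkward to use directly, and it is precisely the monotonicity of $\|R_k\cdot\|$ that decouples the two indices and reduces matters to a standard tail-splitting argument.
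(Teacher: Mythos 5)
Your proof is correct and follows essentially the same route as the paper's: negate the conclusion, apply Lemma~\ref{lem:maps from S_E} with $T$ the identity to obtain $\|A_n-Q_kA_n\|\le 2\|Q_kA_n\|^{1/2}\|R_kA_n\|^{1/2}+\|R_kA_n\|$, and deduce total boundedness of $(A_n)$ from the finite rank of $Q_k$, contradicting the hypothesis. Your careful unpacking of the negation (via the monotonicity of $k\mapsto\|R_ky\|$ and the allowance for finitely many exceptional indices) is in fact slightly more precise than the paper's one-line assertion that the negation reads $\lim_m\sup_n\|R_mA_n\|=0$, which taken literally would need order continuity of $\ce$ to dispose of the exceptional terms.
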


\begin{proof} Assume there is no such subsequence, that is
$$\lim_m {\sup_n{\|R_mA_n\|}}=0.$$
Applying Lemma~\ref{lem:maps from S_E} when $T$ is the identity operator, 
we obtain the inequality
\begin{eqnarray*}
\|A_n-Q_mA_n\|      &\le & \|A_n- Q_mA_n-R_mA_n\|+\|R_mA_n\|\\
          & \le & 2 \|Q_mA_n\|^{\frac{1}{2}}
                \|R_mA_n\|^{\frac{1}{2}}+ \|R_mA_n\| .
\end{eqnarray*}
Thus, $\lim_m \sup_n \|A_n-Q_mA_n\| = 0$. However, $Q_m$ is a finite rank map, hence
the set $(A_n)$ is relatively compact, a contradiction.
\end{proof}

\begin{proposition} \label{prop:glid_hump}
Suppose $\ce$ is a separable symmetric sequence space.
Let $(A_n)$ be a weakly null positive sequence in $\se(H)$,
which contains no convergent subsequences. Then there exists $c > 0$
with the property that, for any $\vr \in (0,1)$, there exist sequences
$1 = n_1 < n_2 < \ldots$ and $0 = m_0 < m_1 < \ldots$, so that
$\inf_k \|A_{n_k}\| > c$, and
$$
\sum_k \|A_{n_k} - (P_{m_k} - P_{m_{k-1}}) A_{n_k} (P_{m_k} - P_{m_{k-1}})\| < \vr .
$$
Consequently, the sequence $(A_{n_k})$ is equivalent to a disjoint sequence of
positive finite dimensional operators.
\end{proposition}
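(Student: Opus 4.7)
The plan is to implement a gliding-hump selection adapted to the two-sided block structure of $\se(H)$. Since $\ce$ is separable, it is order continuous, so every element of $\se$ is compact; the (countably many) ranges of the $A_n$ then lie in a common separable subspace, and we may assume $H = \ell_2$. After passing to the subsequence provided by Lemma~\ref{lem:pos_seq}, assume $\|R_n A_n\|_\se > c$, hence $\|A_n\|_\se > c$, for some $c > 0$. Weak convergence also yields $C := \sup_n \|A_n\|_\se < \infty$.

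Two convergence facts will drive the inductive selection. First, for each fixed $M$, $\|Q_M A_n\|_\se \to 0$ as $n \to \infty$: the operator $Q_M$ has finite rank on $\se$, so $(Q_M A_n)$ is a weakly null sequence in a finite-dimensional subspace, where weak and norm convergence coincide. Second, for each fixed $n$, $\|R_m A_n\|_\se \to 0$ as $m \to \infty$: Corollary~\ref{cor:ord cont} gives $\|A_n - Q_m A_n\|_\se \to 0$, and since $P_m^\perp A_n = P_m^\perp(A_n - Q_m A_n)$, we have $\|R_m A_n\|_\se \leq \|A_n - Q_m A_n\|_\se$. I will then fix a rapidly decreasing sequence $\delta_k > 0$ (to be tuned below), set $m_0 = 0$ and $n_1 = 1$, and alternate: given $n_k$ and $m_{k-1}$, first choose $m_k > m_{k-1}$ with $\|R_{m_k} A_{n_k}\|_\se < \delta_k$, then $n_{k+1} > n_k$ with $\|Q_{m_k} A_{n_{k+1}}\|_\se < \delta_{k+1}$.

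To estimate $\|A_{n_k} - D_k A_{n_k} D_k\|_\se$, with $D_k = P_{m_k} - P_{m_{k-1}}$, the plan is to use the identity
$$A_{n_k} - D_k A_{n_k} D_k = (P_{m_{k-1}} + P_{m_k}^\perp) A_{n_k} + D_k A_{n_k}(P_{m_{k-1}} + P_{m_k}^\perp);$$
self-adjointness of $A_{n_k}$ and symmetry of the Schatten norm then reduce matters to controlling $\|P_{m_{k-1}} A_{n_k}\|_\se$ and $\|P_{m_k}^\perp A_{n_k}\|_\se$. The key off-diagonal-block estimate will come from Lemma~\ref{lem:maps from S_E} with $T = \mathrm{id}_\se$: for positive $x$ and a projection $P_m$,
$$\|P_m x P_m^\perp\|_\se \leq \|P_m x P_m^\perp + P_m^\perp x P_m\|_\se = \|x - Q_m x - R_m x\|_\se \leq 2\|Q_m x\|_\se^{1/2}\|R_m x\|_\se^{1/2},$$
the first inequality holding because the two off-diagonal blocks are adjoint to each other with disjoint left and right supports, so the singular values of the sum are a doubling of those of either block. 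Hence $\|P_m A\|_\se \leq \|Q_m A\|_\se + 2\|Q_m A\|_\se^{1/2}\|R_m A\|_\se^{1/2}$, and analogously for $\|P_m^\perp A\|_\se$. Plugging in and using $\|A_{n_k}\|_\se \leq C$ gives $\|A_{n_k} - D_k A_{n_k} D_k\|_\se \leq 4\delta_k + 8C^{1/2}\delta_k^{1/2}$; picking $\delta_k$ small enough makes the sum over $k$ less than $\vr$.

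For the \emph{consequently} clause, the operators $B_k := D_k A_{n_k} D_k$ are positive, of finite rank, and pairwise supported on mutually orthogonal diagonal bands $D_k \ell_2$, so they form a $1$-unconditional (hence basic) disjoint positive sequence in $\se$; combined with the lower bound $\|B_k\|_\se \geq c - \vr$ and $\sum_k \|A_{n_k} - B_k\|_\se < \vr$, the standard basic-sequence stability lemma produces the equivalence of $(A_{n_k})$ with $(B_k)$. The principal technical subtlety will be in the third paragraph: although $\|R_{m_{k-1}} A_{n_k}\|_\se$ and $\|Q_{m_k} A_{n_k}\|_\se$ may be as large as $C$, the geometric-mean form of Lemma~\ref{lem:maps from S_E} ensures that smallness of $\|Q_{m_{k-1}} A_{n_k}\|_\se$ and $\|R_{m_k} A_{n_k}\|_\se$ alone suffices to give good cross-block bounds.
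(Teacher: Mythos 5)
Your proposal is correct and follows essentially the same route as the paper: reduction to $H=\ell_2$ by separability, the subsequence from Lemma~\ref{lem:pos_seq}, an alternating gliding-hump selection of $(m_k)$ and $(n_k)$, and the geometric-mean estimate $\|x-Q_mx-R_mx\|\le 2\|Q_mx\|^{1/2}\|R_mx\|^{1/2}$ from Lemma~\ref{lem:maps from S_E}. The only divergence is organizational: the paper peels off $R_{m_k}$ and then $Q_{m_{k+1}}$ by two successive triangle inequalities, whereas you bound $\|(I-D_k)A_{n_k}\|$ directly via the (correct) observation that a single off-diagonal block is dominated in every symmetric norm by the sum of the two off-diagonal blocks.
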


\begin{proof}
By the separability (equivalently, order continuity) of $\ce$,
there exists a projection $p \in B(H)$ with separable range, so
that $p A_k p = A_k$ for any $k$. Thus, it suffices to prove
our proposition in $\se$.

Furthermore, the order continuity of $\ce$ implies that the finite rank
operators are dense in $\se$. It is easy to see that, for any rank $1$
operator $u$, $\lim_n \|u - Q_n u\| = 0$. Thus,
$\lim_n \|x - Q_n x\| = 0$ for any $x \in \ce$.

By scaling, we can assume $\sup_n \|A_n\| = 1$.
Applying Lemma~\ref{lem:pos_seq}, and passing to a subsequence if necessary,
we may assume  that $\|R_nA_{n}\|>c$, for some positive number $c$.
We construct the sequences $(n_k)$ and $(m_k)$ recursively. Set $n_1 = 1$
and $m_0 = 0$. As noted above, there exists $m_1 > m_0$ so that
$\|A_{n_1} - P_{m_1} A_{n_1} P_{m_1}\| < \vr/2$.

Suppose we have already selected $0 = m_0 < m_1 < \ldots < m_j$ and
$1 = n_1 < n_2 < \ldots < n_j$ so that, for $1 \leq j \leq k$,
$$
\|A_{n_k} - (P_{m_k} - P_{m_{k-1}}) A_{n_k} (P_{m_k} - P_{m_{k-1}})\| <
 2^{-j} \vr .
$$
As $Q_m$ is a finite rank operator for any $m$, and the sequence $(A_n)$
is weakly null, $\lim_n \|Q_m A_n\| = 0$. Consequently, there exists
$n_{k+1} > n_k$ so that $\|Q_{m_k} A_{n_{k+1}}\| < 2^{-2(k+1)-4} \vr^2$.
Then
\begin{eqnarray*}
&
\|A_{n_{k+1}} - R_{m_k} A_{n_{k+1}}\| \leq
\|A_{n_{k+1}} - R_{m_k} A_{n_{k+1}} - Q_{m_k} A_{n_{k+1}}\|
 + \|Q_{m_k} A_{n_{k+1}}\|
\\
&
\leq
2 \|Q_{m_k} A_{n_{k+1}}\|^{1/2} \|R_{m_k} A_{n_{k+1}}\|^{1/2}
 + \|Q_{m_k} A_{n_{k+1}}\|
< 2^{-(k+2)} \vr .
\end{eqnarray*}
Now find $m_{k+1}$ so that
$\|R_{m_k} A_{n_{k+1}} - Q_{m_{k+1}} R_{m_k} A_{n_{k+1}}\| < 2^{-(k+2)} \vr$.
\end{proof}

\begin{proposition} \label{prop:S_1_SPSP}
For any Hilbert space $H$, $\is_1(H)$ has the SPSP.
\end{proposition}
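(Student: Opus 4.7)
The plan is to show directly that, for any positive sequence $(A_n)$ in $\is_1(H)$ weakly converging to some $A$, we have $\|A_n - A\|_1 \to 0$; the limit $A$ is automatically positive, since the positive cone is norm-closed. The point is that, in the predual setup of $\is_1(H)$ (with dual $B(H)$), positivity forces weak convergence to be accompanied by convergence of trace norms, which can then be upgraded to norm convergence by a finite-rank cutoff argument.

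First I would observe that the identity $I \in B(H)$ pairs with elements of $\is_1(H)$ via the trace, so $\tr(A_n) \to \tr(A)$; since $A_n, A \geq 0$, this reads $\|A_n\|_1 \to \|A\|_1$. For any projection $P$ viewed as a bounded dual element, one also gets $\tr(A_n P^\perp) \to \tr(A P^\perp)$. Fix $\vr > 0$ and, using the spectral decomposition of $A$, choose a finite-rank spectral projection $P$ of $A$ with $\tr(A P^\perp) < \vr^2$.

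Next I would decompose
\[
\|A_n - A\|_1 \leq \|P A_n P - P A P\|_1 + \|A_n - P A_n P\|_1 + \|A - P A P\|_1,
\]
and handle the three summands separately. The first tends to $0$: the map $X \mapsto P X P$ is weakly continuous and has range in the finite-dimensional space $P \is_1(H) P$, where the weak and norm topologies coincide. For the other two, the core estimate is the positive tail bound
\[
\|X P^\perp\|_1 \leq \|X^{1/2}\|_2 \, \|X^{1/2} P^\perp\|_2 = \|X\|_1^{1/2} \, (\tr(X P^\perp))^{1/2},
\]
valid for $X \in (\is_1(H))_+$ by H\"{o}lder's inequality for Schatten norms and the cyclicity of the trace; the same bound controls $\|P^\perp X P\|_1$. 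Applying this to $X = A$ (using the choice of $P$) and to $X = A_n$ (using $\tr(A_n P^\perp) \to \tr(A P^\perp) < \vr^2$ and the boundedness of $\|A_n\|_1$) bounds the second and third summands by $O(\vr)$ for $n$ large, and so $\limsup_n \|A_n - A\|_1 = O(\vr)$.

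The main point to watch is that mere weak convergence does not by itself control the corner $P^\perp A_n P^\perp$ in trace norm; what saves the day is that positivity permits the factorization $A_n = A_n^{1/2} \cdot A_n^{1/2}$, converting the weakly accessible quantity $\tr(A_n P^\perp)$ into a genuine trace-norm estimate via Hilbert--Schmidt duality. This step is specific to positive elements: for general, not necessarily selfadjoint sequences the conclusion fails, as witnessed by the weakly null but non-positive matrix units $E_{1n}$ of constant trace-norm $1$ in $\is_1$.
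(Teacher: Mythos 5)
Your proof is correct, and it takes a genuinely different route from the paper's. The paper first reduces to $H=\ell_2$ by a separability argument and then argues by contradiction: if $(A_n)$ had no norm-convergent subsequence, Lemma \ref{lem:pos_seq} (which in turn rests on the $t^2Q_n+t^{-2}R_n$ conjugation estimate of Lemma \ref{lem:maps from S_E}) would yield $c>0$ and a subsequence with $\|R_kA_{n_k}\|_1>c$; since for positive operators $\|R_mA_{n_k}\|_1=\trace(A_{n_k}P_m^\perp)$ and the projections $P_m^\perp$ decrease, testing weak convergence against a single fixed $P_m^\perp$ with $\trace(AP_m^\perp)<c$ gives the contradiction. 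You instead argue directly: you cut off with a finite-rank spectral projection $P$ of the weak limit $A$ and control the tails by the H\"older/Cauchy--Schwarz bound $\|XP^\perp\|_1\le\|X\|_1^{1/2}\,\trace(XP^\perp)^{1/2}$ for $X\ge 0$. Both proofs ultimately exploit the same mechanism --- for positive trace-class operators the weakly accessible quantity $\trace(XP^\perp)$ controls the trace norm of the tail --- but your $X^{1/2}$-factorization replaces the paper's conjugation trick for the off-diagonal corners, your cutoff is adapted to the limit $A$ rather than to a fixed coordinate filtration, and your direct $\vr$-argument avoids both the contradiction/subsequence structure and the preliminary reduction to separable $H$ (the spectral projection of $A$ already lives in a separable corner). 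The trade-off is that the paper's route reuses Lemmas \ref{lem:maps from S_E} and \ref{lem:pos_seq}, which are needed elsewhere in the text, whereas yours is self-contained. One cosmetic point: to say the weak limit $A$ is positive you should invoke that the positive cone, being convex and norm-closed, is weakly closed.
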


\begin{proof}
It suffices to consider the case of infinite dimensional $H$.
Suppose $A_0, A_1, A_2, \ldots$ are positive elements of $\is_1(H)$,
and $A_n \to A_0$ weakly. Then there exist projections $p_0, p_1, p_2, \ldots$
with separable range, so that $p_i A_i p_i = A_i$ for every $i$.
Then $p = \vee_{i \geq 0} p_i$ has separable range, and $p A_i p = A_i$
for every $i$. Thus, we can assume that $H = \ell_2$.

By Lemma~\ref{lem:pos_seq} there exist $c>0$ and a
subsequence such that $\|R_kA_{n_k}\|>c$. Since $R_m \ge R_k$ when $m \le k$,
we have $\trace (R_mA_{n_k}) >c$ for every $k$. On the other hand we can always
pick $m$ such that $\trace(R_mA)=\|R_mA\| <c$.
This contradicts $A_n \to A$  weakly.
\end{proof}

%For Schatten spaces, Lemma \ref{lem:psol comp} immediately implies:

%\begin{proposition}\label{prop:compact_int}
%Suppose $\ce$ is a separable strongly symmetric non-commutative sequence space.
% and $H$ is a Hilbert space. Then any order interval in $\se(H)$ is compact.
%Then any order interval in $\ce$ is compact.
%\end{proposition}

% \begin{remark}
% $(1)$
% For $\ce = \is_1$, this result has been known
% (see e.g. \cite[Corollary III.5.11]{Tak}).
% $(2)$
% As noted above, for symmetric sequence spaces order
% continuity is equivalent to separability.
% \end{remark}

\begin{proposition}\label{prop:se_PSP}
Suppose $\ce$ is a strongly symmetric sequence space, and $H$
is an infinite dimensional Banach space.
Then the following are equivalent:
\begin{enumerate}
\item $\ce=\ell_1$.
\item $\ce$ has the Schur Property.
\item $\ce$ has the PSP.
\item $\ce$ has the SPSP.
\item $\se(H)$ has the PSP.
\item $\se(H)$ has the SPSP.
% \item $\se(H)$ contains $\ell_1$ as a subspace.
\end{enumerate}
\end{proposition}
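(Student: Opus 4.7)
The plan is to split the six conditions into a ``sequence-space block'' $(1)$--$(4)$ and a ``Schatten block'' $(5)$--$(6)$, and to connect the two via the positive isometric diagonal embedding $\ce \hookrightarrow \se(H)$ used in the proof of Proposition \ref{prop:l_1_S_E_cont}. The purely definitional links $(1) \Rightarrow (2) \Rightarrow (4) \Rightarrow (3)$ (classical Schur Property of $\ell_1$, followed by Schur $\Rightarrow$ SPSP $\Rightarrow$ PSP) and $(6) \Rightarrow (5)$ require no work.

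For the equivalences inside the sequence-space block, I would appeal to the two results cited in the introduction to this subsection. Every (strongly) symmetric sequence space is an atomic Banach lattice with atoms $e_n$, so Wnuk's theorem \cite{Wn89a} yields $(3) \Rightarrow (2)$, and the Kami\'nska--Masty\l{}o theorem \cite{KM00} yields $(2) \Rightarrow (1)$. Combined with the cheap implications, this closes the equivalence of $(1)$--$(4)$.

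To bring $(5)$ and $(6)$ into the picture, note first that $(1) \Rightarrow (6)$ is precisely Proposition \ref{prop:S_1_SPSP}, since under $(1)$ we have $\se(H) = \is_1(H)$. For $(5) \Rightarrow (3)$, I would use the positive isometric embedding
\[
j \colon \ce \longrightarrow \se(H), \qquad (\alpha_k) \longmapsto \sum_k \alpha_k\, e_k \otimes e_k ,
\]
where $(e_k)$ is an orthonormal basis of $H$; since $j$ is bounded and linear, it is weak-to-weak continuous, so a positive weakly null sequence in $\ce$ is carried to a positive weakly null sequence in $\se(H)$, hence norm-null by $(5)$, and by isometry the original sequence is norm-null in $\ce$. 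This closes the cycle $(1) \Rightarrow (6) \Rightarrow (5) \Rightarrow (3) \Rightarrow (2) \Rightarrow (1)$.

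I do not anticipate a genuine obstacle: once the external references \cite{Wn89a} and \cite{KM00} are invoked, everything else is mechanical. If one prefers a self-contained argument for $(3) \Rightarrow (1)$ that avoids \cite{KM00}, the Lindenstrauss--Tzafriri dichotomy \cite[Proposition 3.a.6]{LT2} for symmetric bases forces the unit vector basis of $\ce$ to be equivalent to the $\ell_1$-basis (it is not weakly null by $(3)$, nor equivalent to the $c_0$-basis, which is itself weakly null), and the symmetric sequence-space structure — with its upper estimate $\|\sum \alpha_n e_n\|_\ce \leq \sum |\alpha_n|$ — then upgrades this to $\ce = \ell_1$ with equivalent norm.
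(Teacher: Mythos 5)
Your proof is correct, and the scaffolding (the trivial implications, $(1)\Rightarrow(6)$ via Proposition \ref{prop:S_1_SPSP}, and $(5)\Rightarrow(3)$ via the positive isometric diagonal embedding of $\ce$ into $\se(H)$) coincides with the paper's. Where you genuinely diverge is the one nontrivial closing implication. The paper proves $(3)\Rightarrow(1)$ directly in two lines: if the unit vector basis of $\ce$ is not equivalent to the $\ell_1$-basis, then by symmetry no subsequence is, so Rosenthal's dichotomy makes $(e_n)$ weakly null, contradicting the PSP. You instead factor this as $(3)\Rightarrow(2)\Rightarrow(1)$ by importing Wnuk's theorem \cite{Wn89a} (PSP $=$ Schur for atomic lattices) and the Kami\'nska--Masty\l{}o theorem \cite{KM00} ($\ell_1$ is the only symmetric sequence space with the Schur property). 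Both routes work --- the paper itself quotes those two external results in the preamble to this subsection, so invoking them is legitimate --- but the paper's argument is self-contained modulo Rosenthal, and is strictly cheaper: it never needs the full strength of either cited theorem. Amusingly, the ``backup'' argument you sketch at the end (a symmetric basis that is not weakly null must be equivalent to the $\ell_1$-basis by unconditionality, and the ideal property then forces $\ce=\ell_1$ as a set) is essentially the paper's proof; I would promote it from an aside to the main argument, and double-check the citation, since the dichotomy for symmetric bases lives in \cite{LT1} rather than \cite{LT2}. One small point worth a sentence in either version: for a non-separable $\ce$ the unit vectors need not form a basis of all of $\ce$, but the argument only ever uses the sequence $(e_n)$ inside its closed span, so nothing is lost.
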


\begin{proof}
$(1) \Rightarrow (2)$ is well known. The implications
$(2) \Rightarrow (4) \Rightarrow (3)$, $(6) \Rightarrow (4)$,
and $(6) \Rightarrow (5) \Rightarrow (3)$ are obvious.
$(1) \Rightarrow (6)$ follows from Proposition~\ref{prop:S_1_SPSP}.

$(3) \Rightarrow (1)$. Assume that basis $(e_n)$ of $\ce$ is not equivalent
to the canonical basis of $\ell_1$. By symmetry, $(e_n)$ contains
no subsequence equivalent to the canonical basis of $\ell_1$. By
Rosenthal's dichotomy, the sequence $(e_n)$ is weakly null,
which contradicts the PSP.
\end{proof}

We complete this section by (partially) describing Banach lattices possessing various
versions of the Schur Property.

\begin{proposition}\label{prop:SPSP_atomless}
Any Banach lattice $E$ with the SPSP is atomic.
\end{proposition}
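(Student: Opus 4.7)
\bigskip

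\noindent\emph{Plan.} I would argue the contrapositive: assuming $E$ is not atomic, exhibit a positive sequence that is weakly convergent but not norm convergent, violating SPSP.

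The first step is to observe that SPSP forces $E$ to have order continuous norm. If $c_0$ embedded as a closed sublattice of $E$, the image $(e_n)$ of the canonical basis of $c_0$ would be positive and bounded below in norm, yet weakly null in $E$ (weak nullity in $c_0$ transfers up via Hahn--Banach extension of functionals), already contradicting even the weaker PSP. By the Meyer--Nieberg characterization, a Banach lattice that contains no closed sublattice isomorphic to $c_0$ has order continuous norm, so $E$ is order continuous.

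Next, I would combine order continuity with SPSP to conclude that every order interval $[0,x]$ in $E$ is norm compact. Indeed, in any order continuous Banach lattice order intervals are weakly compact (a classical theorem), and the remark immediately following the definition of SPSP in the paper records that weakly compact subsets of $E_+$ are norm compact under SPSP (via Eberlein--\v{S}mulian). Consequently every order interval in $E$ is norm compact.

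Finally, I would derive a contradiction by exhibiting a non-compact order interval under the assumption of non-atomicity. Pick $x \in E_+$ with $\|x\|=1$ whose principal band $B_x$ is non-atomic, so that the Boolean algebra $\mathcal{B}_x$ of components of $x$ is atomless. Since $E$ is order continuous, order-continuous positive functionals separate points of $B_x$, so one may select $\phi \in E^*_+$ with $\|\phi\|\leq 1$, $\phi$ order continuous, and $\phi(x)>0$. The restriction $\mu := \phi|_{\mathcal{B}_x}$ is a nonnegative finitely additive measure; order continuity of $\phi$ combined with Boolean atomlessness of $\mathcal{B}_x$ forces $\mu$ to be atomless as a measure (any would-be $\mu$-atom yields a decreasing directed system of full-$\mu$-measure components whose infimum, by the atomlessness of $\mathcal{B}_x$, can be split into two nontrivial pieces of positive $\mu$-measure, contradicting the $\mu$-atom property once order continuity of $\phi$ passes $\mu$ to the infimum). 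On the atomless measure space $(\mathcal{B}_x,\mu)$, build a Rademacher-like system $(e_n)\subset \mathcal{B}_x$ of components of $x$ satisfying $\mu(e_n)=\mu(x)/2$ and $\mu(e_n\,\triangle\, e_m)=\mu(x)/2$ for $n\neq m$, by iterated halving and Boolean independence. Because $|e_n-e_m|=e_n\,\triangle\,e_m$ is itself a component of $x$ in $E_+$ and $\|y\|_E\geq \phi(y)$ for $y\in E_+$,
\[
\|e_n-e_m\|_E \;\geq\; \phi(|e_n-e_m|) \;=\; \mu(x)/2 \;>\; 0 \qquad (n\neq m).
\]
Hence $(e_n)\subset [0,x]$ admits no norm-Cauchy subsequence, so $[0,x]$ is not norm compact, contradicting the second step.

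\medskip

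\noindent The main obstacle is the construction of the atomless measure $\mu$ on $\mathcal{B}_x$: one must simultaneously exploit order continuity of $E$ (to obtain an order continuous $\phi$ with $\phi(x)>0$) and atomlessness of $\mathcal{B}_x$ (a consequence of non-atomicity of $B_x$), and then verify via the directed-infimum argument that $\mu$ has no measure-atoms. Once $\mu$ is atomless, the Rademacher-like construction of $(e_n)$ is standard, and the lower bound on $\|e_n-e_m\|_E$ follows directly from the defining estimate $\|y\|_E\geq\phi(y)$ on the positive cone.
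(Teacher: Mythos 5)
Your argument is correct, but it reaches the conclusion by a genuinely different route than the paper in the decisive step. The first step (SPSP $\Rightarrow$ no lattice copy of $c_0$ $\Rightarrow$ order continuity) is identical in substance; the paper phrases it via the KB-space characterization in Meyer--Nieberg. After that the paper does \emph{not} pass through compactness of order intervals: it reduces (via band decomposition and \cite[Proposition 1.a.9]{LT2}) to the case where $E$ is atomless with a weak unit, invokes the representation theorem \cite[Theorem 1.b.4]{LT2} to realize $L_\infty(\mu)\subset E\subset L_1(\mu)$ over an atomless probability space, and then uses the Chen--Wickstead construction of a weakly null sequence $(f_n)$ with $|f_n|=\chi_S$ to produce $e_n=e+f_n\geq 0$ converging weakly but not in norm to $e$ --- a direct violation of SPSP. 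You instead stay intrinsic: order continuity gives weakly compact order intervals, SPSP upgrades them to norm compact (the remark after the definition of SPSP), and non-atomicity is contradicted by building a separated Rademacher-type sequence of components of some $x$ inside $[0,x]$. What your approach buys is independence from the function-space representation theorem and from \cite{CW}; what it costs is that the burden shifts to Boolean-algebra bookkeeping: you must justify that atomlessness of the band $B_x$ makes the algebra of components atomless (this uses Freudenthal's theorem in the Dedekind complete lattice $E$), that the induced order-continuous measure $\mu=\phi|_{\mathcal C_x}$ has no measure-atoms, and that independent halving is available for such a measure. These are all true, and your sketch of the no-measure-atom argument essentially works, though the parenthetical is stated loosely: splitting the infimum $e_0$ of the full-measure subcomponents into two nonzero pieces does not directly give two pieces of positive measure; rather, one of the two pieces must have full measure, hence lie above $e_0$, contradicting properness. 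With that repair the proof is complete and equivalent in strength to the paper's.
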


Recall that a Banach lattice is called \emph{atomic} if it is the
band generated by its atoms.

\begin{proof}
Clearly, a Banach lattice with the SPSP cannot contain a lattice
copy of $c_0$. Theorems 2.4.12 and 2.5.6 of \cite{M-N} show that
$E$ is a KB-space. In particular, $E$ is order continuous.
% First note that $E$ contains no subspaces isomorphic to $c_0$,  
% otherwise $E$ would contain a lattice copy of $c_0$ which contradicts SPSP and,
% thus, $E$ is a KB-space by \cite[Theorem~2.5.6]{M-N}.
% This implies $E$ is  order continuous.
By \cite[Proposition~1.a.9]{LT2}, without loss of generality, we may assume $E$ is atomless and  has a weak unit. Therefore, by  \cite[Theorem~1.b.4]{LT2}, there exists an atomless probability measure space $(\Omega,\mu)$, so that
$L_\infty(\mu) \subset E \subset L_1(\mu)$. Suppose,
furthermore, that $e \in E_+ \backslash \{0\}$. Find $S \subset \Omega$
of finite measure, so that $e\chi_{S} > \alpha \chi_{S} $ for some positive number $\alpha$. By the proof of
\cite[Proposition 2.1]{CW}, there exists a weakly null sequence $(f_n)$,
so that $|f_n| = 1$ $\mu$-a.e. on $S$, $f_n = 0$ on $\Omega \backslash S$,
and $f_n \to 0$ in $\sigma(L_\infty(\mu), L_1(\mu))$. Letting
$e_n = e + f_n$, we conclude that $e_n \geq 0$ for every $n$,
and $e_n \to e$ weakly, but not in norm.
\end{proof}

%\begin{proof}%[Sketch of the proof]
%Suppose first that $E$ is a Banach function space
%(see e.g. \cite[Section 2.6]{M-N} for a definition)
%  K\"othe function space
% (see e.g. \cite[Definition 1.b.17]{LT2} for a definition)
%on an atomless probability measure space $(\Omega,\mu)$, so that
%$L_\infty(\mu) \subset E \subset L_1(\mu)$. Suppose,
%furthermore, that $e \in E_+ \backslash \{0\}$. Find $S \subset \Omega$
%of finite measure, so that $e > \alpha > 0$ on $S$. By the proof of
%\cite[Proposition 2.1]{CW}, there exists a weakly null sequence $(f_n)$,
%so that $|f_n| = 1$ $\mu$-a.e. on $S$, $f_n = 0$ on $\Omega \backslash S$,
%and $f_n \to 0$ in $\sigma(L_\infty(\mu), L_1(\mu))$. Letting
%$e_n = e + f_n$, we conclude that $e_n \geq 0$ for every $n$,
%and $e_n \to e$ weakly, but not in norm.

%If $E$ is a non-atomic order continuous Banach lattice, we represent
%it as a Banach function space (as in \cite[Theorem 2.7.8]{M-N}), and
%once again obtain $e, e_1, e_2, \ldots \in E_+$, so that $e_n \to e$
%weakly, but not in norm.

%Finally, suppose $E$ is  order continuous Banach lattice.
%Let $E_a$ be the ideal generated by the atoms of $E$, and
%$E_n = E_a^\perp$. By \cite[Corollary 2.4.4]{M-N} and its proof,
%$E_a$ and $E_n$ are projection bands, with $E = E_a \oplus E_n$.
%Thus, $E_n$ must be trivial.
%\end{proof}

\begin{proposition}\label{prop:equiv_prop}
For any order continuous Banach lattice $E$ the SPSP, the PSP, and
the Schur Property are equivalent.
\end{proposition}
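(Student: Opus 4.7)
The implications Schur $\Rightarrow$ SPSP $\Rightarrow$ PSP follow trivially from the definitions: SPSP specializes to PSP when the weak limit is zero, and Schur of course implies SPSP. The content of the proposition therefore lies entirely in establishing PSP $\Rightarrow$ Schur for order continuous Banach lattices.

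The plan is to reduce to the atomic case and invoke the theorem of Wnuk \cite{Wn89a}, according to which PSP coincides with the Schur property on atomic Banach lattices. To reduce, I would argue that an order continuous Banach lattice $E$ satisfying PSP must be atomic. The first step is to observe that PSP excludes a lattice copy of $c_0$ in $E$, since the standard basis of $c_0$ provides a positive disjoint weakly null sequence of unit norm. Combined with order continuity, by \cite[Theorems 2.4.12 and 2.5.6]{M-N} this implies that $E$ is a KB-space, and in particular weakly sequentially complete. The second step is to rule out a nonzero atomless band: supposing such a band $F$ exists, I would perform a halving-style construction in $F$, similar to the one used in the proof of Proposition~\ref{prop:SPSP_atomless}, to produce a positive, order-bounded, disjoint sequence of uniformly positive norm. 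Such a sequence is automatically weakly null in any order continuous lattice (being order-bounded and disjoint), contradicting PSP.

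The main obstacle is verifying, in the halving construction, that the resulting disjoint fragments have norms bounded uniformly away from $0$. In Proposition~\ref{prop:SPSP_atomless} the existence of a positive weak limit supplied this lower bound essentially for free; working only from PSP one must exploit the atomless structure of the band, together with the KB-space property, to ensure that successive halvings cannot shrink the norm arbitrarily. This is where the interplay between the order continuity, the lack of $c_0$, and the atomless hypothesis must be carefully orchestrated. Once atomicity is established, Wnuk's equivalence immediately yields the Schur property and closes the argument.
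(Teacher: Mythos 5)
The trivial implications are fine, and you are right that once atomicity is in hand the Schur property follows from the PSP via the weak sequential continuity of the lattice operations --- this is exactly how the paper closes its argument, citing \cite[Proposition~2.5.23]{M-N} (equivalently, Wnuk's theorem). The gap is in your reduction: the implication ``order continuous $+$ PSP $\Rightarrow$ atomic'' is false, so the obstacle you flag in your last paragraph is not a technicality to be orchestrated around but an impossibility. Take $E = L_1(0,1)$: it is order continuous, atomless, and has the PSP (if $0\le f_n\to 0$ weakly then $\|f_n\|_1=\langle f_n,\one\rangle\to 0$). In particular, your halving construction cannot produce a disjoint, order-bounded, positive sequence with norms bounded away from zero: if $0\le x_n\le x$ are pairwise disjoint, then $\sum_{n\le N}x_n=\bigvee_{n\le N}x_n\le x$, so in $L_1$ one gets $\sum_n\|x_n\|_1\le\|x\|_1$ and the norms are forced to tend to $0$. (As you note, such a sequence is automatically weakly null, so the PSP of $L_1$ already forbids the lower bound you need.)

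The paper sidesteps this by deriving atomicity from the \emph{super} positive Schur property: in the proof of Proposition~\ref{prop:SPSP_atomless} the test sequence is $e_n=e+f_n$ with $f_n$ of Rademacher type, which is positive and converges weakly to the nonzero element $e$; the PSP gives no information there because $e_n$ is not weakly null and $f_n=e_n-e$ is not positive, whereas the SPSP is violated. So what the quoted argument actually establishes is SPSP $\Leftrightarrow$ Schur for order continuous lattices, together with the equivalence of all three properties on atomic order continuous lattices. Since $L_1(0,1)$ has the PSP but fails the Schur property, the implication PSP $\Rightarrow$ Schur that your plan (and, taken literally, the statement itself) requires simply does not hold for general order continuous lattices, and no repair of the reduction will produce it; the provable version of your strategy is to derive atomicity from the SPSP and then apply Wnuk's equivalence.
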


\begin{proof}
 Proposition~\ref{prop:SPSP_atomless} implies $E$ is atomic. Therefore
the result follows from the fact that the lattice operations are weakly
sequentially continuous, see  \cite[Proposition~2.5.23]{M-N}.
\end{proof}

\begin{remark}\label{rem:not l_1}
An order continuous atomic Banach lattice with the Schur Property need not
be isomorphic to $\ell_1$, even as a Banach space. Indeed, suppose
$(E_n)$ is a sequence of finite dimensional lattices. Then
$E = (\sum_{n=1}^\infty E_n)_{\ell_1}$ has the Schur Property.
If, for instance, $E_n = \ell_2^n$, $E$ is not isomorphic to $\ell_1$.
We do not know of any Banach lattice with the Schur Property
which is not isomorphic to an $\ell_1$ sum of finite dimensional spaces.
\end{remark}

\subsection{Compactness of order intervals in preduals of von Neumann algebras}
\label{ss:psp predual}

Following \cite[Definition III.5.9]{Tak},
we say that a von Neumann algebra $\A$ is \emph{atomic} if every
projection in $\A$ has a minimal subprojection.
Note that $\A$ is atomic iff it is isomorphic to
$(\sum_{i \in I} B(H_i))_{\ell_\infty(I)}$, for some index
set $I$, and collection of Hilbert spaces $(H_i)_{i \in I}$.
Indeed, any von Neumann algebra of the above form is atomic.
To prove the converse, note that an atomic algebra must be
of type $I$. Moreover, it can be written as
$\A = (\sum_{j \in J} \A_j)_{\ell_\infty(J)}$,
where $\A_j$ is an atomic algebra of type $I_j$.
By \cite[Theorem V.1.27]{Tak} (see also \cite[Theorem 6.6.5]{KR2}
and \cite[III.1.5.3]{Bl}),
$\A_j$ is isomorphic to ${\mathcal{C}}_j \overline{\otimes} B(H_j)$,
where ${\mathcal{C}}_j$ is the center of $\A_j$.
Denote the set of all minimal projections in ${\mathcal{C}}_j$
by $F_j$. Then the elements of $F_j$ are mutually orthogonal, and
their join equals the identity of ${\mathcal{C}}_j$. Thus,
${\mathcal{C}}_j$ is isomorphic to $\ell_\infty(F_j)$.
Alternatively, one could use \cite[III.1.5.18]{Bl}
and its proof to show that ${\mathcal{C}}_j$ is an
$\ell_\infty$ space.

\begin{theorem}\label{thm:ord int}
For a von Neumann algebra $\A$, the following are equivalent:
\begin{enumerate}
\item
$\A$ is an atomic von Neumann algebra.
\item
$\A_\star$ has the SPSP.
\item
All order intervals in $\A_\star$ are compact.
\end{enumerate}
\end{theorem}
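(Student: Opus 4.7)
The plan is to prove the equivalences by establishing the cyclic chain $(1) \Rightarrow (2) \Rightarrow (3) \Rightarrow (1)$. For $(1) \Rightarrow (2)$, the structural description preceding the theorem identifies $\A_\star$ with $(\sum_{i \in I} \is_1(H_i))_{\ell_1(I)}$, with positive cone decomposing coordinatewise and $\|f\|_{\A_\star} = f(\one) = \sum_i \|f^{(i)}\|_{\is_1(H_i)}$ for positive $f$. If $f_n \to f$ weakly in $\A_\star$ with $f_n \ge 0$, testing against $\one \in \A$ yields $\|f_n\| \to \|f\|$, while testing against elements of the summand $B(H_i) \subset \A$ gives weak convergence of coordinates $f_n^{(i)} \to f^{(i)}$ in $\is_1(H_i)$, which upgrades to norm convergence by Proposition~\ref{prop:S_1_SPSP}. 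A standard $\ell_1$-truncation argument -- split the sum at some large $N$, absorb the tail via convergence of total norms -- then assembles these into $\|f_n - f\|_{\A_\star} = \sum_i \|f_n^{(i)} - f^{(i)}\|_{\is_1} \to 0$.

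For $(2) \Rightarrow (3)$, given $\varphi \in (\A_\star)_+$, the order interval $[0,\varphi]$ is weak-$*$ closed in $\A^\star$ (both $\psi \ge 0$ and $\varphi - \psi \ge 0$ are weak-$*$ closed conditions) and norm bounded since $\|\psi\| = \psi(\one) \le \varphi(\one)$, hence weak-$*$ compact. Order-continuity of $\varphi$ is inherited by any $\psi \in [0,\varphi]$, so $[0,\varphi] \subset \A_\star$; and the restriction of the weak-$*$ topology from $\A^\star$ to $\A_\star$ coincides with the weak topology on $\A_\star$ (both being $\sigma(\cdot, \A)$). Thus $[0,\varphi]$ is weakly compact in $\A_\star$, and the SPSP combined with the Eberlein-Smulian Theorem forces norm compactness.

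For $(3) \Rightarrow (1)$ I would argue contrapositively. If $\A$ is not atomic, pick a non-zero projection $p \in \A$ with no minimal subprojections. Iteratively halving $p$ into pairs of orthogonal equivalent subprojections and taking the weak-operator closure of the abelian $*$-algebra they generate produces a diffuse abelian von Neumann subalgebra $M \subset p\A p$ isomorphic to $L_\infty([0,1])$. Granted a faithful normal conditional expectation $E : \A \to M$ and a faithful normal state $\omega$ on $M$ corresponding to Lebesgue measure, set $\varphi = \omega \circ E$; for Rademacher-type projections $r_n \in M$ (with $\omega(|r_n - r_m|) = \tfrac{1}{2}$ for $n \ne m$), define $\psi_n(a) = \omega(r_n E(a))$. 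Positivity of $E$ and $0 \le r_n \le \one$ give $\psi_n \in [0,\varphi]$, while choosing $a = \mathrm{sign}(r_n - r_m) \in M$ (so that $E(a) = a$) yields $\|\psi_n - \psi_m\| \ge \omega(|r_n - r_m|) = \tfrac{1}{2}$, so $[0,\varphi]$ fails to be compact. The principal obstacle is producing $M$ together with the expectation $E$: this is straightforward for semifinite parts of $\A$ (any trace-preserving expectation onto a diffuse MASA works), but for a type III summand one must invoke Takesaki's theorem on normal conditional expectations, applied to a diffuse abelian subalgebra of the centralizer of an appropriately chosen faithful normal state.
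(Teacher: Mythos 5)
Your implications $(1)\Rightarrow(2)$ and $(2)\Rightarrow(3)$ are correct and essentially reproduce the paper's argument: the paper also identifies $\A_\star$ with $(\sum_i \is_1(H_i))_{\ell_1}$ and reduces to Proposition \ref{prop:S_1_SPSP}, and your weak compactness of $[0,\varphi]$ is exactly the content of Remark \ref{rem:A_*PSP} (the paper gets it from Cauchy--Schwarz and \cite[Theorem III.5.4]{Tak}; your weak-$*$ compactness argument, using that functionals dominated by a normal one are normal and that $\sigma(\A^\star,\A)$ restricts to the weak topology of $\A_\star$, is a legitimate variant), after which SPSP plus Eberlein--Smulian finishes as you say.

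For $(3)\Rightarrow(1)$ you diverge from the paper, which instead proves (Proposition \ref{prop:compl L1}) that a non-atomic $\A$ has $\A_\star$ containing a positively complemented order-isometric copy of $L_1(0,1)$, and then exhibits $\one + r_n \in [0,2\cdot\one]$ there. Your direct construction of the separated functionals $\psi_n(a)=\omega(r_n E(a))$ is sound \emph{granted} a normal positive projection $E$ of $\A$ onto a diffuse abelian subalgebra $M\cong L_\infty[0,1]$, but producing $E$ is precisely the hard structural step, and your sketch does not close it. Two concrete issues: (i) iterated halving only gives a commutative algebra generated by nested projections; to know its weak closure is \emph{diffuse} (i.e., $L_\infty$ of a non-atomic measure rather than an atomic measure algebra) you already need a normal state assigning mass $2^{-n}$ to the level-$n$ projections, which is automatic in the presence of a trace but not otherwise; (ii) in the type III case, the claim that some faithful normal state has a centralizer containing a diffuse abelian subalgebra (so that Takesaki's theorem applies) is true for $III_\lambda$, $\lambda>0$, but is genuinely nontrivial for $III_0$ and in the non-separable/non-$\sigma$-finite setting, and you give no argument. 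This is exactly the point where the paper leans on the cited result of \cite{MNha} (every algebra with no type I summand contains a copy of the hyperfinite $II_1$ factor that is the range of a normal conditional expectation); your proof needs an input of the same depth, so the appeal to ``an appropriately chosen state'' should be replaced by a reference of that strength or by the discrete/continuous decomposition of type III algebras.
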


\begin{remark}\label{rem:A_*PSP}
Note that the predual of any von Neumann algebra has the PSP.
Indeed, suppose $(f_n)$ is a sequence of positive elements of $A_\star$,
converging weakly to $0$. Then $\|f_n\| = \langle f_n, \one \rangle$,
hence $\lim_n \|f_n\| = \lim_n \langle f_n, \one \rangle = 0$.

Also, any order interval in the predual of a von Neumann algebra is
weakly compact. Indeed, 
suppose $f$ is a positive element of $\A_\star$. Then $[0,f]$
is convex and closed. For any $g \in [0,f]$ and $a \in \A$,
Cauchy-Schwarz Inequality \cite[Proposition I.9.5]{Tak} yields
$|g(a)|^2 \leq g(\one) g(a^* a) \leq f(\one) f(a^* a)$.
By \cite[Theorem III.5.4]{Tak}, $[0,f]$ is relatively weakly compact.
\end{remark}

To prove Theorem \ref{thm:ord int}, we need to determine when $\A_\star$ contains
an order copy of $L_1(0,1)$, complemented via a positive projection.

\begin{proposition}\label{prop:compl L1}
For a von Neumann algebra $\A$, the following statements hold:
\begin{enumerate}
\item 
If $\A$ is atomic, then $\A_\star$ does not contain $L_1(0,1)$ isomorphically.
\item
If $\A$ is not atomic, then there exists an isometric order isometry
$j : L_1(0,1) \to \A_\star$, and a positive projection
$P : \A_\star \to \ran(j)$.
\end{enumerate}
\end{proposition}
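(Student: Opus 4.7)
The plan is to establish (1) and (2) by separate arguments. For (1), I would use the structure theorem for atomic von Neumann algebras to write $\A \cong (\sum_{i \in I} B(H_i))_{\ell_\infty(I)}$, so that $\A_\star \cong (\sum_{i \in I} \is_1(H_i))_{\ell_1(I)}$. This $\ell_1$-sum embeds isometrically into $\is_1(\bigoplus_i H_i)$ via block-diagonal trace-class operators. Since $L_1(0,1)$ is separable, any isomorphic copy of it inside $\A_\star$ would lie in a countable sub-sum in which each $H_i$ may be taken separable, and thus would embed into the standard Schatten class $\is_1 = \is_1(\ell_2)$. The claim then reduces to the classical fact that $L_1(0,1)$ does not embed isomorphically into $\is_1$, which I would cite from the Schatten-class literature (in the spirit of results by Arazy and Lindenstrauss).

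For (2), I would proceed in four steps. \emph{Step 1 (reduction):} since $\A$ is not atomic, choose a nonzero projection $p \in \A$ that contains no minimal subprojection of $\A$, and pass to the non-atomic corner $p \A p$; any positive isometric embedding into $(p \A p)_\star$ lifts to one into $\A_\star$ through the canonical inclusion $\phi \mapsto \phi(p \cdot p)$, and the accompanying positive projection extends as well. \emph{Step 2 (diffuse abelian subalgebra):} inside $p \A p$, produce a $w^*$-closed abelian $*$-subalgebra $\mathcal{N}$ with $\mathcal{N} \cong L_\infty([0,1], \lambda)$, where $\lambda$ is Lebesgue measure; non-atomicity supplies a self-adjoint element of $p \A p$ with continuous spectrum on $[0,1]$, whose bicommutant (suitably cut down) is the desired $\mathcal{N}$. \emph{Step 3 (conditional expectation):} construct a faithful normal conditional expectation $E \colon \A \to \mathcal{N}$. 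In the semifinite case this is just the trace-preserving expectation; in general, by Takesaki's theorem such $E$ exists provided $\mathcal{N}$ lies in the centralizer of some faithful normal state of $\A$, and this can be arranged since the centralizer of any faithful normal state on a non-atomic algebra itself contains a diffuse abelian subalgebra. \emph{Step 4 (passage to preduals):} set $j := E_\star \colon L_1(0,1) \cong \mathcal{N}_\star \to \A_\star$; because $E$ is a positive isometric surjective projection, $j$ is positive, isometric, and order-preserving. Composing with the restriction $r \colon \A_\star \to \mathcal{N}_\star$ (the predual of the inclusion $\mathcal{N} \hookrightarrow \A$) gives the desired positive projection $P := j \circ r$ with range $j(L_1(0,1))$.

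The main obstacle will be Step 3. In the semifinite case the conditional expectation is routine, but for type III $\A$ one must carefully place $\mathcal{N}$ inside the centralizer of a faithful normal state, which requires some modular-theoretic input. A conceivable alternative that sidesteps Takesaki's theorem is to build $j$ directly from a well-chosen positive normal functional with diffuse spectral distribution, using functional calculus on a generator of $\mathcal{N}$, and then verify positivity, isometric embedding, and positive complementedness by hand.
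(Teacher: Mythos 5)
Your part (1) is essentially the paper's own argument: reduce by separability to the standard Schatten class $\is_1$ and quote the non-embeddability of $L_1(0,1)$ there (the paper cites Friedman for this). Your block-diagonal embedding of $(\sum_i \is_1(H_i))_{\ell_1}$ into $\is_1(\bigoplus_i H_i)$ supplies a detail the paper leaves implicit, and is correct.

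For part (2) you take a genuinely different, unified route --- a diffuse abelian subalgebra $\mathcal{N}\cong L_\infty[0,1]$ together with a faithful normal conditional expectation $E:\A\to\mathcal{N}$, then pass to preduals --- whereas the paper splits into cases: for non-atomic type $I$ it reads off a positively complemented copy of $L_1(\nu_s)\otimes\is_1(H_s)$ from the structure theorem $\A_s=\mathcal{C}_s\overline{\otimes}B(H_s)$, and when a non-type-$I$ summand is present it invokes the existence of a copy of the hyperfinite $II_1$ factor $\mathcal{R}$ with a normal conditional expectation (citing Marcolino Nhany) and then builds $L_1(0,1)$ inside $\mathcal{R}_\star$ explicitly via the dyadic filtration $M_{2^n}$ with its diagonal and averaging projections. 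Your Steps 1, 2 and 4 are sound; in particular the predual formalism in Step 4 is exactly right ($j=E_\star$ is a positive order isometry and $P=E_\star\circ\iota_\star$ is a positive projection onto its range). The genuine gap is Step 3, specifically the sentence with which you dispose of the type $III$ case: it is not true, in the generality stated, that the centralizer of \emph{any} faithful normal state on a non-atomic von Neumann algebra contains a diffuse abelian subalgebra. Centralizers of faithful normal states can have minimal projections, and the existence of states with very small (even trivial) centralizer on type $III$ factors is a delicate modular-theoretic matter. What you actually need is that \emph{some} faithful normal state has a diffuse centralizer containing your $\mathcal{N}$, and establishing this for a general non-type-$I$ algebra is essentially equivalent to the hyperfinite-subfactor-with-expectation theorem that the paper imports as a black box. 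So either you cite that result --- at which point your Steps 2--4, run inside $\mathcal{R}$ with the trace-preserving expectation onto its diagonal masa, reproduce the paper's construction --- or you must supply the missing modular-theoretic argument; as written, Step 3 does not go through for type $III$ algebras. The semifinite case is indeed routine as you say, provided you first cut down to a diffuse corner of finite trace so that the trace-preserving expectation onto $\mathcal{N}$ exists.
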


\begin{proof}
$(1)$
Note that, for any Hilbert space $H$, $\is_1(H)$ does not contain $L_1(0,1)$
isomorphically. Indeed, otherwise, by the separability argument, we would
be able to embed $L_1(0,1)$ into $\is_1$. This, however, is impossible, by e.g.
\cite{Fr}. To finish the proof of $(1)$, recall that, if $\A$ is atomic,
then it can be identified with $(\sum_i B(H_i))_\infty$, and
$\A_\star$ is isometric to $(\sum_i \is_1(H_i))_1$.

$(2)$
We can write $\A = \A_I \oplus \A_{\lnot I}$, where $\A_I$ has type $I$, and
$\A_{\lnot I}$ has no type $I$ components (that is, it is a direct sums
of von Neumann algebras of types $II$ and $III$). Either $\A_I$ is not atomic,
or $\A_{\lnot I}$ is non-trivial.

If $\A_I$ is not an atomic von Neumann algebra,
write $\A_I = (\sum_{s \in S} \A_s)_{\ell_\infty(S)}$, with
$\A_s = {\mathcal{C}}_s \overline{\otimes} B(H_s)$
(${\mathcal{C}}_s$ is the center of $\A_s$).
By \cite[Theorem III.1.18]{Tak}, ${\mathcal{C}}_s$
is isomorphic to $L_\infty(\nu_s)$, for some locally
finite measure $\nu_s$. Consequently,
$\A_\star$ contains $L_1(\nu_s) \otimes \is_1(H_s)$ as a positively
and completely contractively complemented subspace.
As $\A_I$ is not an atomic von Neumann algebra,
then $\nu_s$ is not a purely atomic measure, for some $s$.
By the above,
$\A_\star$ contains $L_1(\nu_s) \otimes \is_1(H_s)$ as a positively
and completely contractively complemented subspace.
Furthermore, $L_1(\nu_s)$ is complemented in
$L_1(\nu_s) \otimes \is_1(H_s)$ via a positive projection $Q$: just
pick a rank one projection $e \in B(H_s)$, and set
% , for $f \in L_1(\nu_s)$ and $a \in \is_1(H_s)$,
% $Q(f \otimes a) = f \otimes \langle a \xi, \xi \rangle
$Q(x) = (I_{L_1(\nu_s)} \otimes e)x(I_{L_1(\nu_s)} \otimes e)$.
Finally, $L_1(\nu_s)$ contains a positively complemented copy of $L_1(0,1)$.
Indeed, we can represent $L_1(\nu_s)$ a direct sum of spaces $L_1(\sigma_i)$,
where $\sigma_i$ is a finite measure. Since $\nu_s$ is not purely atomic,
the same is true for $L_1(\sigma_i)$, for some $i$.
By \cite[Theorem III.1.22]{Tak} (or \cite[Theorem 9.4.1]{KR2}),
$L_1(\nu_s)$ contains a positively complemented copy of $L_1(0,1)$.

Now suppose $\A_{\lnot I}$ is non-trivial. By the reasoning of
\cite[Page 217]{MNha}, $\A_{\lnot I}$ contains a von Neumann subalgebra
${\mathcal{B}}$, isomorphic to the hyperfinite $II_1$ factor ${\mathcal{R}}$.
Furthermore, there exists a normal contractive projection (conditional expectation)
$P : \A_{\lnot I} \to {\mathcal{B}}$. By \cite[Theorem III.3.4]{Tak}, $P$ is positive.
Consequently, $\A_\star$ contains a copy of ${\mathcal{R}}_\star$, complemented via
a positive contractive projection.

Let $\mu$ be the ``canonical'' measure on the Cantor set $\Delta$, defined
as follows: represent $\Delta = \{0,1\}^\N$, and write $\mu = \nu^\N$,
where the measure $\nu$ on $\{0,1\}$ satisfies $\nu(0) = \nu(1) = 1/2$.
For $\alpha = (i_1, \ldots, i_n) \in I = \{0,1\}^{< \N}$, define the function
$f_\alpha$ by setting % $f_\alpha(j_1, j_2, \ldots) = \prod_{k=1}^n \one_{i_k(j_k)}$.
$f_\alpha(j_1, j_2, \ldots) = \prod_{k=1}^n \delta_{i_k,j_k}$
(here, $\delta_{i,j}$ stands for Kronecker's delta).
Note that $f_\alpha$ and $f_\beta$ have disjoint supports if $\alpha$
and $\beta$ are different bit strings of the same length.
Moreover, $f_\alpha = f_{(\alpha,0)} + f_{(\alpha,1)}$.
Clearly, $L_1(\mu)$ is the closed linear span of the functions $f_\alpha$.
Subdividing $(0,1)$ appropriately, one can also construct an isometric
order isomorphism between $L_1(\mu)$ and $L_1(0,1)$.

It therefore suffices to show that there exists an order isometry
$J : L_1(\mu) \to {\mathcal{R}}_\star$, so that the range of $J$
is the range of a positive projection.
To prove this, let $\Delta_n = \{0,1\}^n$, and denote by $\mu_n$ the
product of $n$ copies of $\nu$. In this notation, $L_1(\mu_n)$ is
isometric to $\ell_1^{2^n}$. We can also identify $L_1(\mu_n)$ with
$\span[f_\alpha : |\alpha| = n]$. Let
$i_n$ be the formal identity $L_1(\mu_{n-1}) \to L_1(\mu_n)$
(taking $f_\alpha$ to itself, when $|\alpha| \leq n$).

For $n \in \N$, consider the map
$j_n : M_{2^{n-1}} \to M_{2^n} : x \mapsto x \otimes M_2$.
Denote by $\tr_n$ the normalized trace on $M_{2^n}$, and
by $M_{2^n}^\star$ the dual of $M_{2^n}$ defined using $\tr_n$.
Then $j_n : M_{2^{n-1}}^\star \to M_{2^n}^\star$ is an isometry.
Furthermore, the diagonal embedding
$u_n : L_1(\mu_n) \to M_{2^n}^\star$ is an isometry, and
$u_n i_n = j_n u_{n-1}$. We can view both
$M_{2^{n-1}}^\star$ and $L_1(\mu_n)$ as subspaces of $M_{2^n}^\star$,
Furthermore, for any $n$ there exist positive contractive
unital projections $p_n : M_{2^n}^\star \to L_1(\mu_n)$ and
$q_n : M_{2^n}^\star \to M_{2^{n-1}}^\star$ (the ``diagonal''
and ``averaging'' projections, respectively). We then have
$p_n j_n = i_n p_{n-1}$.

It is well known (see e.g. \cite[Theorem 3.4]{PiLP}) that
${\mathcal{R}}_\star$ can be viewed as $\overline{\cup_n M_{2^n}^\star}$.
Moreover, for any $n$ there exists a positive contractive
unital projection $\tilde{q}_n : {\mathcal{R}}_\star \to M_{2^n}^\star$
(with $\tilde{q}_n|_{M_{2^n}^\star} = q_{n+1} \ldots q_N$). 
Now identify $L_1(\mu)$ with $\overline{\cup_n L_1(\mu_n)}$,
and define the projection $P : {\mathcal{R}}_\star \to J(L_1(\mu))$
by setting $P|_{M_{2^n}^\star} = q_n$.
\end{proof}

\begin{proof}[Proof of Theorem \ref{thm:ord int}]
If (1) holds, then $\A = (\sum_i B(H_i))_\infty$, hence
$\A_\star = (\sum_i \is_1(H_i))_1$.
(2) and (3) follow from Propositions \ref{prop:se_PSP} and
\ref{lem:psol comp}, respectively.

Now suppose $\A$ is not atomic. By Proposition \ref{prop:compl L1},
$\A_\star$ contains a (positively and contractively complemented) lattice copy of $L_1(0,1)$.
To finish the proof, note that $L_1(0,1)$ fails the
SPSP, and has non-compact order intervals. Indeed, let $f = \one$, and
$f_n = \one + r_n$, where $r_1, r_2, \ldots$ are Rademacher functions.
Then $f_n \to f$ weakly, but not in norm. This witnesses the failure of
the SPSP. Moreover, $f_n/2 \in [0,\one]$, hence the order interval
$[0,\one]$ is not compact.
\end{proof}

% In certain cases, we have been able to strengthen
% Theorem \ref{thm:ord int}, by proving that $\A_\star$ contains
% an order copy of $L_1(0,1)$, complemented via a positive projection.

\section{Main results on majorization}\label{sec:main}

\subsection{Compact operators on non-commutative function spaces}\label{ss:schatten_main}

First we consider maps from ordered Banach spaces into Schatten spaces.

\begin{proposition}\label{prop:compact2}
Suppose $\ce$ is a separable symmetric sequence space, $H$ is a Hilbert space,
$A$ is a generating OBS, and $0 \le T \le S: A \to \se(H)$
(not necessary linear).  If $S$ is compact, then $T$ is compact.
\end{proposition}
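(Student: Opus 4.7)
The plan is to apply Lemma \ref{lem:psol comp} directly: the image of a bounded set of positive elements under $S$ is relatively compact, its positive solid is therefore relatively compact, and $T$ maps that set into this solid. The generating hypothesis on $A$ is used only to reduce the problem to inputs in $A_+$.

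First I would observe that the hypotheses of Lemma \ref{lem:psol comp} are available for $\se(H)$. Indeed, separability of the symmetric sequence space $\ce$ is equivalent to its order continuity (as noted in Subsection \ref{ss:intro}), and by \cite[Section 3]{DDdP93} (the fact invoked in the proof of Corollary \ref{cor:ord cont}) this transfers to $\se(H)$. Thus $\se(H)$ is an order continuous non-commutative symmetric sequence space, and Lemma \ref{lem:psol comp} applies to it.

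The core step is as follows. Let $B \subset A_+$ be a bounded set. Since $S$ is compact, $S(B)$ is relatively compact in $\se(H)$. For each $a \in B$, the hypothesis $0 \le T(a) \le S(a)$ shows that $T(a)$ lies in the order interval $[0, S(a)]$, and hence $T(a) \in \psol(S(B))$. Consequently $T(B) \subset \psol(S(B))$, which is relatively compact by Lemma \ref{lem:psol comp}. To pass from a bounded set $B \subset A_+$ to a bounded set $B \subset A$, I would invoke the generating constant $\gen_A$: every $x \in B$ can be written as $x = u - v$ with $u, v \in A_+$ and $\max\{\|u\|,\|v\|\} \le \gen_A \|x\|$, so $B$ is contained in $B^+ - B^+$ for some bounded $B^+ \subset A_+$, and the preceding argument applied to $B^+$ controls $T$ on $B$ (this is the role of the parenthetical ``not necessarily linear'' clause, which makes the reduction a purely set-theoretic observation about the positive cone).

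I do not expect any significant obstacle: the compactness of positive solids of relatively compact sets in order continuous Schatten spaces, which is the genuine content here, has already been established in Lemma \ref{lem:psol comp}. The only subtle point is interpreting the domination $0 \le T \le S$ in the absence of linearity, and that is handled naturally by working pointwise on $A_+$ and by the generating property of $A$.
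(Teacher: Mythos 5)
Your argument is correct and is essentially identical to the paper's proof: the paper also reduces to $\ball(A)_+$ via the generating property and then observes that $T(\ball(A)_+) \subseteq \psol(S(\ball(A)_+))$, which is relatively compact by Lemma \ref{lem:psol comp}. Your added verification that separability of $\ce$ yields order continuity of $\se(H)$ (so that Lemma \ref{lem:psol comp} applies) is a detail the paper leaves implicit, but it is the same route.
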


\begin{proof}
It is enough to show $T(\ball(A)_+)$ is relatively compact. Thus follows from
Lemma \ref{lem:psol comp}, since $T(\ball(A)_+) \subseteq \psol(S(\ball(A)_+))$.
\end{proof}

For operators into Schatten spaces, we have:

\begin{proposition}\label{prop:compact}
Suppose $\ce$ is a separable symmetric sequence space,
and $H$ is a Hilbert space.

$(1)$
If $\ce$ does not contain $\ell_1$, and operators $T$ and $S$ from $\se(H)$
to a normal OBS $Z$ satisfy $0 \leq T \leq S$, then the compactness of
$S^\star$ implies the compactness of $T^\star$.

$(2)$
Conversely, suppose $\ce$ contains $\ell_1$,
and a Banach lattice $Z$ is either not atomic, or not order continuous.
Then there exist $0 \leq T \leq S : \se(H) \to Z$ so that
$S$ is compact, but $T$ is not.
\end{proposition}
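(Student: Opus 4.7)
For part $(1)$, my starting point is Schauder's theorem: $S^{\star}$ is compact if and only if $S$ is, and likewise for $T$, so it suffices to prove that $T$ itself is compact. Separability of $\ce$ forces order continuity, so Lemma~\ref{lem:diagonal} gives $\lim_n \|S|_{R_n(\se)}\| = 0$. Since $R_n x = P_n^\perp x P_n^\perp$ is positive whenever $x$ is, the hypothesis $0 \leq T \leq S$ together with the normality of $Z$ yields $\|T R_n x\| \leq \nor_Z \|S R_n x\| \leq \nor_Z \|S|_{R_n(\se)}\|\,\|x\|$ for $x \in \se_+$. Decomposing a general $x \in \se$ into four positive summands (real/imaginary, positive/negative parts, each of norm at most $\|x\|$) upgrades this to $\|T R_n\| \leq 4 \nor_Z \|S|_{R_n(\se)}\| \to 0$. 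Corollary~\ref{cor:maps from S_E} then bounds $\|T(I - Q_n)\|$ by a quantity that also tends to zero (note $\|T Q_n\| \leq \|T\| < \infty$). Since each $T Q_n$ is finite rank, $T$ is a uniform limit of finite rank operators, hence compact, and $T^\star$ is compact by Schauder again.

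For part $(2)$, my plan is to first build a counterexample $0 \leq T_0 \leq S_0 \colon \ell_1 \to Z$ with $S_0$ compact and $T_0$ non-compact, then transport it to $\se(H)$. Proposition~\ref{prop:l_1_S_E_cont} provides a positive isomorphism $j\colon \ell_1 \to \se(H)$ onto a positively complemented subspace spanned by a disjoint positive sequence; let $\pi\colon \se(H) \to \ell_1$ be the associated positive projection with $\pi \circ j = \mathrm{id}$. Setting $T = T_0 \circ \pi$ and $S = S_0 \circ \pi$ gives $0 \leq T \leq S$ (positivity of $\pi$ preserves the domination), $S$ compact (because $S_0$ is), and $T \circ j = T_0$, so non-compactness of $T_0$ is inherited by $T$.

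To produce $T_0$ and $S_0$, I would locate $e \in Z_+$ and a sequence $(z_n) \subset [0,e]$ with no norm-convergent subsequence; then $S_0(\xi) = \bigl(\sum_i \xi_i\bigr)e$ is rank one and $T_0(\xi) = \sum_i \xi_i z_i$ satisfies $0 \leq T_0 \leq S_0$ with $T_0(e_n) = z_n$ supplying the required non-compactness. When $Z$ is not order continuous, the standard characterization via disjoint order-bounded sequences supplies a disjoint positive $(z_n) \subset [0,e]$ with $\inf_n \|z_n\| > 0$; continuity of the lattice operations forces any limit of a subsequence to be disjoint from itself, hence zero, contradicting the norm lower bound. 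When instead $Z$ is order continuous but not atomic, I would mirror the atomless reduction from the proof of Proposition~\ref{prop:SPSP_atomless}: pass to a principal band with weak unit $e$, apply \cite[Theorem~1.b.4]{LT2} to realize it between $L_\infty(\mu)$ and $L_1(\mu)$ for an atomless $\mu$, and take $z_n = (\one + r_n)/2 \in [0,\one] \subset [0,e]$ with $(r_n)$ the Rademachers. Independence yields $\|z_n - z_m\|_Z \geq \|z_n - z_m\|_{L_1(\mu)} = 1/2$, precluding any norm Cauchy subsequence.

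The main technical obstacle is the non-atomic, order-continuous subcase: disjoint order-bounded sequences are automatically norm null there and so cannot serve as $(z_n)$, forcing one to invoke the Lindenstrauss-Tzafriri function-space representation to produce Rademacher-type witnesses and then verify that they actually lie in a single order interval of $Z$ (which is why one first passes to a principal band where the weak unit plays the role of $e$).
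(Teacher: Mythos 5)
Your proof is correct, but both halves take routes that are genuinely different from the paper's. For part (1) the paper's argument is two lines: since $\ce$ does not contain $\ell_1$, $\ce^\star$ is separable by \cite[Theorem 1.c.9]{LT1}, and Proposition \ref{prop:compact2} applies directly to $0 \leq T^\star \leq S^\star : Z^\star \to \se(H)^\star$ (with $Z^\star$ generating because $Z$ is normal). Your direct argument --- push the compactness of $S$ through Lemma \ref{lem:diagonal} to get $\|S|_{R_n(\se)}\| \to 0$, transfer this bound to $T$ on the positive cone via normality, upgrade to all of $\se$ by the four-positive-parts decomposition, and conclude with Corollary \ref{cor:maps from S_E} that $T$ is a norm limit of the finite-rank operators $TQ_n$ --- is a legitimate alternative that avoids dualizing and avoids identifying $\se(H)^\star$ with a Schatten space over $\ce^\star$; the one thing you should add is the reduction to separable $H$ (the truncations $P_n$, $Q_n$, $R_n$ live on $\ell_2$; since every bounded sequence in $\se(H)$ is supported on a common separable block, sequential compactness of $T$ follows from the separable case, but this step deserves a sentence). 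For part (2) the paper transports a counterexample exactly as you do, via the positively complemented lattice copy of $\ell_1$ from Proposition \ref{prop:l_1_S_E_cont}, but it obtains the pair $0 \leq \tilde{T} \leq \tilde{S} : \ell_1 \to Z$ by citing Wickstead \cite{Wi} outright; you instead rebuild that example from a non-compact order interval $[0,e]$ in $Z$, splitting into the non-order-continuous case (a disjoint normalized order-bounded sequence via \cite[Theorem 2.4.2]{M-N}) and the order-continuous non-atomic case (Rademacher-type elements of $[0,\one]$ in the Lindenstrauss--Tzafriri representation of a principal atomless band, where the $L_1$-lower bound on the norm rules out Cauchy subsequences). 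Both subcases are sound and mirror machinery the paper uses elsewhere (Lemma \ref{lem:not_comp} and the proof of Theorem \ref{thm:ord int}), so your version is self-contained at the cost of length, while the paper's is shorter but leans on an external citation.
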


\begin{proof}%[Proof of Proposition~\ref{prop:compact}]
$(1)$
By \cite[Theorem 1.c.9]{LT1}, $\ce^\star$ is separable.
Now apply Proposition \ref{prop:compact2}.

$(2)$
% Suppose $Z$ is not atomic, or not order continuous.
By \cite{Wi},
there exist $0 \leq \tilde{T} \leq \tilde{S} : \ell_1 \to Z$
so that $\tilde{S}$ is compact, but $\tilde{T}$ is not. By
Proposition \ref{prop:l_1_S_E_cont}, there exists a lattice
isomorphism $j : \ell_1 \to \se$, and a positive projection $P$
from $\se$ onto $j(\ell_1)$. Then the operators $T = \tilde{T} j^{-1} P$
and $S = \tilde{S} j^{-1} P$ have the desired properties.
\end{proof}

% Proposition~\ref{prop:compact} fails for operators from $\is_1$. Indeed, consider
% $T$ and $S$, taking $\is_1$ to $B(\ell_2)$, defined via $S(x) = \tr(x) \one$, and
% $T(x) = x$. In this case, $S \geq T$. Indeed, any $x \geq (\is_1)_+$ can be written as
% $x = \sum_{i=1}^\infty c_i \xi_i \otimes \xi_i$, where $(c_i)$ is a summable sequence of
% non-negative numbers, and $(\xi_i)$ is an orthonormal basis in $\ell_2$. Then
% $(S - T) x = \sum_{i=1}^\infty \tilde{c}_i \xi_i \otimes \xi_i \geq 0$, where
% $\tilde{c}_i = \sum_{j \neq i } c_j \geq 0$.

Finally we deal with operators on general non-commutative function spaces.

\begin{proposition}\label{prop:no OC*}
Suppose $\ce$ is a strongly symmetric non-commutative function space, such that
$\ce^\times$ is not order continuous. Suppose, furthermore, that a
symmetric non-commutative function space ${\mathcal{F}}$ contains
non-compact order intervals. Then there exist
$0 \leq T \leq S : \ce \to {\mathcal{F}}$, so that
$S$ has rank $1$, and $T$ is not compact.
\end{proposition}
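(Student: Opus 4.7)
The plan is to mimic, in the non-commutative setting, the classical Wickstead-type construction of dominated non-compact operators. I would take $S$ of rank one, $Sx = \phi(x)\,y$, for a well-chosen $\phi \in (\ce^\times)_+$ and $y \in \mathcal{F}_+$, and set
\[
Tx = \sum_n \phi_n(x)\, w_n,
\]
where $(\phi_n)$ is a disjointly supported positive sequence beneath $\phi$ in $\ce^\times$ and $(w_n) \subset [0,y]$ witnesses the non-compactness of the order interval in $\mathcal{F}$.

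The first step is to extract the $\phi_n$'s from the failure of order continuity of $\ce^\times$, in the spirit of Lemma~\ref{lem:+l_infty}. I would produce $\phi_0 \in (\ce^\times)_+$ and mutually orthogonal projections $(e_n) \subset \pp(\A)$ with $\inf_n \|e_n \phi_0 e_n\|_{\ce^\times} \geq c > 0$. The pinching $y \mapsto \sum_i e_i y e_i$ is contractive on $\A$ and on $\A_\star$, so $\sum_i e_i y e_i \pprec y$; by strong symmetry of $\ce^\times$, $\phi := \sum_n e_n \phi_0 e_n$ lies in $(\ce^\times)_+$ with $\|\phi\|_{\ce^\times} \leq \|\phi_0\|_{\ce^\times}$. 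Setting $\phi_n := e_n \phi e_n$, one has $\phi_n \leq \phi$, $\phi(x) = \sum_n \phi_n(x)$ for $x \in \ce_+$ (using normality of the trace applied to the increasing partial sums), and $\|\phi_n\|_{\ce^\times} \geq c$ for every $n$.

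Next, invoke the hypothesis on $\mathcal{F}$ to fix $y \in \mathcal{F}_+$ with $[0,y]$ not compact and a sequence $(w_n) \subset [0,y]$ with no convergent subsequence (by normality, the $w_n$ are automatically norm-bounded). Set $Sx = \phi(x)\,y$ (rank one, positive) and $Tx = \sum_n \phi_n(x) w_n$. On positives, the series converges absolutely because $\sum_n \phi_n(x) = \phi(x) < \infty$ and $\|w_n\|$ is bounded; the generating property of $\ce$ extends this to all $x \in \ce$. The inequality $0 \leq T \leq S$ then follows pointwise on $\ce_+$ from $0 \leq \phi_n(x) w_n \leq \phi_n(x)\, y$ after summing over $n$.

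To see that $T$ is not compact, I would exhibit, for each $n$, a positive $x_n \in \ball(\ce)$ supported in the corner $e_n \ce e_n$ with $\phi_n(x_n) \geq c' > 0$: such $x_n$ exists because $\phi_n$ acts only on $e_n \ce e_n$ and has norm at least $c$ there, so passing through real and positive parts costs only a factor controlled by the generating constant of $\ce$. Orthogonality of the $e_n$'s gives $\phi_k(x_n) = 0$ for $k \neq n$, so $T(x_n) = \phi_n(x_n)\, w_n$. A convergent subsequence $(T(x_{n_k}))$, combined with a further extraction making $\phi_{n_k}(x_{n_k}) \to \alpha \geq c'$, would produce a convergent subsequence of $(w_n)$, contradicting its choice. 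The principal obstacle is the first step: converting non-order-continuity of $\ce^\times$ into a genuinely disjointly supported sequence with a common positive upper bound, which requires adapting the $\ell_\infty$-extraction of Lemma~\ref{lem:+l_infty} to the Köthe dual $\ce^\times$ — strongly symmetric, but not a priori fully symmetric — crucially via the $\pprec$-contractivity of pinching.
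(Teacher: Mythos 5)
Your proof is correct and follows essentially the same route as the paper: the paper packages your first step as a separate lemma producing a positively complemented positive copy of $\ell_1$ in $\ce$ (via the functionals $y \mapsto \langle e_i x e_i, y\rangle$ coming from Lemma~\ref{lem:+l_infty} applied to $\ce^\times$), and then runs the Wickstead construction $\ell_1 \to {\mathcal{F}}$ with $S\delta_i = y$ and $T\delta_i = y_i$ for a non-convergent sequence $(y_i)\subset[0,y]$ --- which is exactly your $S$ and $T$ written as a composition. The one obstacle you flag (membership of $\sum_n e_n \phi_0 e_n$ in $\ce^\times$, for which strong symmetry alone would not suffice) is resolved just as you anticipate: $\ce^\times$ is automatically fully symmetric by the K\"othe duality results of Dodds--Dodds--de~Pagter, so the $\pprec$-contractivity of the pinching gives membership as well as the norm bound.
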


Note that many spaces ${\mathcal{F}}$ contain non-compact order
ideals. Suppose, for instance, that ${\mathcal{F}}$ arises from a von
Neumann algebra $\A$ that is not atomic, and is equipped with a normal
faithful semifinite trace $\tau$. Using the type decomposition,
we can find a projection $p \in \A$ with a finite trace.
Then the interval $[0,p]$ is not compact. Indeed,
\cite[Proposition V.1.35]{Tak} allows us to construct a family
of projections $(p_{ni})$ ($n \in \N$, $1 \leq i \leq 2^n$), so
that (i) $p = p_{11} + p_{12}$, and
$p_{ni} = p_{n+1,2i-1} + p_{n+1,2i}$ for any $n$ and $i$, and
(ii) all projections $p_{ni}$ are equivalent.
Then the family $q_n = \sum_{i=1}^{2^{n-1}} p_{n,2i}$
is a sequence in $[0,p]$, with no convergent subsequences.

Note that, for fully symmetric non-commutative sequence spaces,
order continuity is fully described by Corollary \ref{cor:crit OC}.

\begin{lemma}\label{lem:dual OC}
Suppose $\ce$ is a strongly symmetric non-commutative function space,
so that $\ce^\times$ is not order continuous. Then there exists
an isomorphism $j : \ell_1 \to \ce$, so that both $j$ and $j^{-1}$
are positive, and $j(\ell_1)$ is the range of a positive projection.
% and a positive projection from $\ce$ onto $j(\ell_1)$.
\end{lemma}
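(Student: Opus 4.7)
The plan is to bridge the hypothesis on $\ce^\times$ to a conclusion about $\ce$ via K\"othe duality: first use Lemma~\ref{lem:+l_infty} to produce an $\ell_\infty$-isomorphic block-diagonal family inside $\ce^\times$, then dualize via the trace pairing to obtain the required $\ell_1$-copy in $\ce$. The preliminary step is to verify that $\ce^\times$ is itself fully symmetric: starting from $\|y\|_{\ce^\times}=\sup\{|\tau(yz)|:\|z\|_\ce\le 1\}$, the non-commutative H\"older inequality $|\tau(yz)|\le\int_0^\infty\mu_y(t)\mu_z(t)\,dt$ together with a standard integration-by-parts argument gives $\|y\|_{\ce^\times}\le\|y'\|_{\ce^\times}$ whenever $y\pprec y'$, so in particular $y\in\ce^\times$ as soon as $y'\in\ce^\times$, and Lemma~\ref{lem:+l_infty} applies.

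That lemma furnishes $x\in(\ce^\times)_+$, mutually orthogonal projections $(e_i)\subset\A$ with $c:=\inf_i\|e_ixe_i\|_{\ce^\times}>0$, an element $y_\infty\in\ce^\times$ with $e_jy_\infty=y_\infty e_j=e_jxe_j$ and $\|y_\infty\|_{\ce^\times}\le\|x\|_{\ce^\times}$, and a positive isomorphism $j_0:\ell_\infty\to\ce^\times$, $j_0(\beta)=(\sum_i\beta_ie_i)y_\infty$. For each $i$, the norming definition of the K\"othe dual produces $z_i'\in\ce$ with $\|z_i'\|_\ce\le 1$ and $|\tau((e_ixe_i)z_i')|\ge c/2$; decomposing $z_i'$ into its real/imaginary, then positive/negative parts (strong symmetry of $\ce$ giving $\|w_\pm\|_\ce\le\|w\|_\ce$ via $\mu_{w_\pm}\le\mu_{|w|}=\mu_w$) and sandwiching by $e_i$ on both sides, one obtains $z_i\in\ce_+$ with $z_i=e_iz_ie_i$, $\|z_i\|_\ce\le 1$, and $d_i:=\tau((e_ixe_i)z_i)\ge c/C$ uniformly for some absolute constant $C$.

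Define $j:\ell_1\to\ce$ by $j(\alpha)=\sum_i\alpha_iz_i$. The upper bound $\|j(\alpha)\|_\ce\le\|\alpha\|_1$ is immediate from the triangle inequality. For the lower bound, for finitely supported $\alpha$ choose $\epsilon_i=\overline{\alpha_i}/|\alpha_i|$ (set to $0$ when $\alpha_i=0$), so that $\|\epsilon\|_\infty\le 1$ and $\phi:=j_0(\epsilon)$ satisfies $\|\phi\|_{\ce^\times}\le\|y_\infty\|_{\ce^\times}\le\|x\|_{\ce^\times}$; the disjointness of supports gives $\tau((e_ixe_i)z_j)=d_j\delta_{ij}$, hence $\tau(\phi\,j(\alpha))=\sum_i|\alpha_i|d_i\ge (c/C)\|\alpha\|_1$, producing $\|j(\alpha)\|_\ce\ge (c/(C\|x\|_{\ce^\times}))\|\alpha\|_1$. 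Positivity of $j^{-1}$ on $j(\ell_1)$ is automatic: compressing $j(\alpha)\ge 0$ by $e_i$ on both sides yields $\alpha_iz_i\ge 0$, forcing $\alpha_i\ge 0$. Finally, $P(w):=\sum_i d_i^{-1}\tau((e_ixe_i)w)\,z_i$ is the sought positive projection: positivity because $\tau$ of a product of positives is non-negative, the projection identity $P(z_j)=z_j$ from the orthogonality relation above, and boundedness by splitting $w$ into its self-adjoint and then positive/negative parts and dominating $\sum_i\tau((e_ixe_i)w_+)=\lim_N\tau(p_Ny_\infty p_Nw_+)\le\|y_\infty\|_{\ce^\times}\|w_+\|_\ce$, where $p_N=\sum_{i\le N}e_i$.

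The main obstacle I anticipate lies at the outset: establishing that the formal block-diagonal element $y_\infty=\sum_i e_ixe_i$ actually belongs to $\ce^\times$ (rather than merely to $\tilde{\A}$), which is precisely where the full symmetry of $\ce^\times$ and the Hardy--Littlewood contractivity of $y\mapsto\sum_i e_iye_i$ on both $\A$ and $L_1(\tau)$ become indispensable. A secondary delicate point is the passage from a nearly-norming $z_i'\in\ce$ to a positive $z_i$: in the non-commutative setting the identity $\|\,|z|\,\|_\ce=\|z\|_\ce$ is not automatic, but the four-part decomposition of $z_i'$ together with strong symmetry bypasses this at the cost of an absolute constant.
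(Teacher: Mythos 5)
Your proposal follows essentially the same route as the paper's proof: apply Lemma~\ref{lem:+l_infty} to the fully symmetric space $\ce^\times$ to get $x$ and the orthogonal projections $(e_i)$, choose positive norming elements $z_i = e_i z_i e_i \in \ce_+$ for the functionals $e_i x e_i$, let $j$ send the unit vectors to the $z_i$, and build the projection from the pairing $w \mapsto (\tau((e_i x e_i)w))_i$. The extra details you supply (full symmetry of $\ce^\times$, the four-part decomposition to pass to positive norming elements, the lower bound via $j_0(\epsilon)$) are correct fillings-in of steps the paper leaves implicit or cites.
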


\begin{proof}
By \cite{DDdP93}, $\ce^\times$ is fully symmetric. By
Lemma \ref{lem:+l_infty}, there exists $x \in \ball(\ce^\times)_+$,
and a sequence of mutually orthogonal projections $(e_i)$, so
that $(\alpha_i) \mapsto \sum \alpha_i e_i x e_i$ determines
a positive embedding of $\ell_\infty$ into $\ce^\times$.
For each $i$, find $y_i \in \ce_+$ so that $e_i y_i e_i = y_i$,
$\|y_i\| < 2 \|e_i x e_i\|^{-1}$, and $\langle e_i x e_i, y_i \rangle = 1$.
The map $j : \ell_1 \to \ce : (\alpha_i) \mapsto \sum_i \alpha_i y_i$
determines a positive isomorphism. Furthermore, define
$U : \ce \to \ell_1 : y \mapsto (\langle e_i x e_i , y \rangle)_i$.
Clearly, $U$ is a bounded positive map, and $Uj = I_{\ell_1}$.
Therefore, $jU$ is a positive projection onto $j(\ell_1)$.
\end{proof}

\begin{proof}[Proof of Proposition \ref{prop:no OC*}]
In view of Lemma \ref{lem:dual OC}, it suffices to construct
$0 \leq T \leq S : \ell_1 \to {\mathcal{F}}$, so that $S$
has rank $1$, and $T$ is not compact.
Pick $y \in {\mathcal{F}}$, so that $[0,y]$ is not compact.
Then find a sequence $(y_i) \subset [0, y]$, without convergent subsequences.
Denote the canonical basis of $\ell_1$ by $(\delta_i)$. Let
$\delta_i^\star$ be the biorthogonal functionals in $\ell_\infty$.
Following \cite{Wi}, define $S$ and $T$ by setting $S \delta_i = y$, and
$T \delta_i = y_i$. In other words, for $a = (\alpha_i) \in \ell_1$,
$S a = \langle \one, a \rangle y$, and
$T a = \sum_i \langle \delta_i^\star, a \rangle y_i$.
It is easy to see that $\rank S = 1$, and $0 \leq T \leq S$.
Moreover, $T(\ball(\ell_1))$ contains the non-compact set
$\{y_1, y_2, \ldots\}$, hence $T$ is not compact.
\end{proof}

\subsection{Compact operators on $C^*$-algebras and their duals}\label{ss:C*alg}
In this section, we determine the $C^*$-algebras $\A$ with the property that
every operator on $\A$, dominated by a compact operator, is itself compact.
First we introduce some definitions. Let $\A$ be a $C^*$-algebra, and
consider $f \in \A^\star$. Let $e \in \A^{\star\star}$ be its support
projection. Following \cite{Je}, we call $f$ \emph{atomic} if every non-zero
projection $e_1 \leq e$ dominates a minimal projection (all projections are
assumed to ``live'' in the enveloping algebra $\A^{\star\star}$).
Equivalently, $f$ is a sum of pure positive functionals. We say that $\A$
is \emph{scattered} if every positive functional is atomic. By \cite{Hu}, \cite{Je},
the following three statements are equivalent: (i) $\A$ is scattered;
(ii) $\A^{\star\star} = (\sum_{i \in I} B(H_i))_\infty$; (iii) the
spectrum of any self-adjoint element of $\A$ is countable. Consequently
(see \cite[Exercise 4.7.20]{Dix}), any compact $C^*$-algebra is scattered. 
% In particular, the equivalence (i) $\Leftrightarrow$ (ii) shows
% that a non-unital $C^*$-algebra is scattered if and only if its
% unitization is.
In \cite{Woj}, it is proven that a separable $C^*$-algebra
has separable dual if and only if it is scattered.

The main result of this section is:

\begin{theorem}\label{thm:scatter}
% For a $C^*$-algebra $\A$, we have:
Suppose $\A$ and ${\mathcal{B}}$ are $C^*$-algebras,
and $E$ is a generating OBS.
\begin{enumerate}
\item
Suppose $\A$ is a scattered.
Then, for any $0 \leq T \leq S : E \to \A^\star$, the compactness of
$S$ implies the compactness of $T$.
\item
Suppose ${\mathcal{B}}$ is a compact.
Then, for any $0 \leq T \leq S : E \to {\mathcal{B}}$, the compactness of
$S$ implies the compactness of $T$.
\item
Suppose $\A$ is not scattered, and ${\mathcal{B}}$ is not compact.
Then there exist $0 \leq T \leq S : \A \to {\mathcal{B}}$,
so that $S$ has rank $1$, while $T$ is not compact.
\end{enumerate}
\end{theorem}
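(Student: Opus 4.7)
For parts (1) and (2), the plan follows a common pattern: since $E$ is generating, $T$ is compact if and only if $T(\ball(E)_+)$ is relatively compact, and the inclusion $T(\ball(E)_+)\subseteq\psol(S(\ball(E)_+))$ reduces the problem to showing that positive solids of relatively compact subsets of the target space are relatively compact. Part (2) is then immediate from Proposition~\ref{prop:dual_alg}(4). For part (1), the scattered assumption, via Jensen's characterization, gives $\A^{\star\star}\cong(\sum_{i\in I}B(H_i))_{\ell_\infty}$, hence $\A^\star\cong(\sum_{i\in I}\is_1(H_i))_{\ell_1}$. Starting from a relatively compact $M\subseteq\A^\star$, one first reduces to a countable index set by separability, then uses order continuity of the $\ell_1$-direct-sum to pick a finite $F\subseteq I$ with $\|\pi_F^\perp x\|<\vr$ uniformly on $M$, and observes that the same bound holds uniformly on $\psol(M)$ because $0\le z\le x$ and positivity of $\pi_F^\perp$ force $\|\pi_F^\perp z\|\le\|\pi_F^\perp x\|$. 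Lemma~\ref{lem:psol comp} applied coordinate by coordinate on $(\sum_{i\in F}\is_1(H_i))_{\ell_1}$ then handles the complementary finite piece.

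For part (3), the plan is to produce a counterexample that mirrors the Wickstead-type construction of Proposition~\ref{prop:no OC*}. Since $\B$ is not compact, Proposition~\ref{prop:dual_alg} supplies $y\in\B_+$ with $[0,y]$ not norm compact; since $[0,y]$ is then also not weakly compact by Proposition~\ref{prop:comp_int}, Eberlein--Smulian produces $(y_n)\subseteq[0,y]$ without convergent subsequence. Since $\A$ is not scattered, $\A^{\star\star}$ has a non-atomic summand, so Proposition~\ref{prop:compl L1}(2) supplies a positive isometric embedding $j:L_1(0,1)\to\A^\star$ complemented by a positive projection. This furnishes a rich family of disjoint positive functionals in $\A^\star$, playing the role of the $\ell_1$-basis that drives Wickstead's construction. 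Taking $\phi=j(\chi_{[0,1]})$ as the norming functional and $\phi_n=j(\chi_{I_n})$ for a suitable partition $[0,1]=\sqcup_n I_n$, one defines $S(a)=\phi(a)y$ (rank one) and $T$ as an operator built from the $\phi_n$ and the $y_n$ that satisfies $0\le T\le S$, with $T(\ball(\A))$ containing --- up to Goldstine-type approximants in $\ball(\A)_+$ of the supports of $\phi_n$ in $\A^{\star\star}$ --- a non-relatively-compact sequence in $\B$.

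The \textbf{main obstacle} lies precisely in the last step of part (3). The most natural construction, $T(a)=\sum_n\phi_n(a)y_n$, turns out to be compact: because $\sum_{n\le N}\phi_n\to\phi$ in norm in $\A^\star$, one has $\|T-T_N\|\le\|y\|\,\|\phi-\sum_{n\le N}\phi_n\|\to 0$, so $T$ is a norm-limit of finite-rank operators. Escaping this requires a subtler design of $T$, for example rescaling the $y_n$ within $[0,y]$ so the full series is not norm-convergent in operator norm yet still respects $T\le S$ pointwise on $\A_+$, or replacing the countable sum by a Pettis-type weakly measurable $\B$-valued integral that exploits the non-separability of $\B$ together with the non-atomic part of $\A^\star$. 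Arranging matters so that the Goldstine approximants $a_n$ produce $T(a_n)$ actually hitting a non-relatively-compact subset of $[0,y]$, rather than collapsing to zero, is the technical heart of the argument.
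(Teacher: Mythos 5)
Your parts (1) and (2) are sound. Part (2) coincides with the paper's argument. For part (1) the paper takes a different route: Pfitzner's theorem (Theorem \ref{th:weak_comp}) gives that $T$ is weakly compact whenever $S$ is, and then the SPSP of the predual of the atomic algebra $\A^{\star\star}$ (Theorem \ref{thm:ord int}) upgrades relative weak compactness of $T(\ball(E)_+)$ to norm compactness. Your direct $\psol$-argument on $(\sum_i\is_1(H_i))_{\ell_1}$ --- uniform tail control on $\psol(M)$ from positivity of the coordinate projections, plus Lemma \ref{lem:psol comp} on the finitely many remaining coordinates --- is a legitimate alternative that avoids Pfitzner and runs parallel to the proof of (2).

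The gap is in part (3), and it is exactly where you locate it: you never produce the operator $T$. Your framework (disjoint $\phi_n=j(\chi_{I_n})$ summing in norm to $\phi$ inside $\A^\star$, paired with an arbitrary non-convergent sequence $(y_n)\subset[0,y]$ in $\B$) cannot be salvaged, as you yourself observe, because $\sum_n\|\phi_n\|<\infty$ forces $\sum_n\phi_n(\cdot)\,y_n$ to be a norm limit of finite-rank operators. The paper's Lemma \ref{lem:not_comp} resolves this by distributing the two ingredients the other way around. The non-compact order interval is taken in $\A^\star$: since $\A$ is not scattered, $\A^{\star\star}$ is not atomic, so by Theorem \ref{thm:ord int} there are $\psi\in\A^\star_+$ and a sequence $(\phi_n)\subset[0,\psi]$ with no norm-convergent subsequence; one may assume $\phi_n\to\phi$ weak$^*$. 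The disjointness is taken in $\B$: since $\B$ is not compact, some $b\in\B_+$ has a non-isolated point of its spectrum, and the functional calculus produces normalized pairwise disjoint $y_n$ in an order interval $[0,y]$ of an abelian subalgebra (Corollary \ref{cor:not_comp}). Then $Sx=\psi(x)y$ and $Tx=\phi(x)y+\sum_n(\phi_n-\phi)(x)\,y_n$: the series converges because $(\phi_n-\phi)(x)\to0$ for each $x$ and the $y_n$ are disjoint elements of $[0,y]$, so a tail has norm at most $\sup_{n>m}|(\phi_n-\phi)(x)|\,\|y\|$ --- no absolute summability is required; the inequalities $0\le T\le S$ are verified termwise; and non-compactness is read off the adjoint, since for functionals $f_m$ biorthogonal to $(y_n)$ one gets $T^\star f_m=(f_m(y)-1)\phi+\phi_m$, which has no convergent subsequence. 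Neither of your proposed repairs (rescaling the $y_n$, a Pettis-type integral) is needed; the missing idea is this role reversal together with the correction term $\phi(x)y$, which is what makes a conditionally but not absolutely convergent series possible.
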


From this, we immediately obtain:

\begin{corollary}\label{cor:scat}
Suppose $\A$ and ${\mathcal{B}}$ are $C^*$-algebras.
Then the following are equivalent:
\begin{enumerate}
\item
At least one of the two conditions holds:
(i) $\A$ is scattered, (ii) ${\mathcal{B}}$ is compact.
\item
If $0 \leq T \leq S : \A \to {\mathcal{B}}$, and $S$ is compact,
then $T$ is compact.
\end{enumerate}
\end{corollary}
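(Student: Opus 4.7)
The corollary will follow by combining the three parts of Theorem \ref{thm:scatter}. For the direction (2) $\Rightarrow$ (1), I would argue by contrapositive: if $\A$ is not scattered and ${\mathcal{B}}$ is not compact, then part (3) of Theorem \ref{thm:scatter} supplies $0 \leq T \leq S : \A \to {\mathcal{B}}$ with $S$ of rank one (hence compact) but $T$ non-compact, contradicting (2).

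For (1) $\Rightarrow$ (2), I would split into the two subcases of hypothesis (1). If ${\mathcal{B}}$ is compact, the conclusion is exactly Theorem \ref{thm:scatter}(2) applied with $E = \A$; here one only needs that a $C^*$-algebra is a generating OBS, which is standard (every element is a linear combination of four positives of comparable norm). If instead $\A$ is scattered, I would pass to Banach-space adjoints. Given $0 \leq T \leq S : \A \to {\mathcal{B}}$ with $S$ compact, the adjoints $T^\star, S^\star : {\mathcal{B}}^\star \to \A^\star$ satisfy $0 \leq T^\star \leq S^\star$ (positivity of $T^\star$ follows by pairing against positive elements of $\A$ and $\mathcal{B}^\star$), and $S^\star$ is compact by Schauder's theorem. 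Setting $E = {\mathcal{B}}^\star$, Theorem \ref{thm:scatter}(1) then yields compactness of $T^\star$, whence $T$ is compact by Schauder once more.

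The only preliminary verification is that $E = {\mathcal{B}}^\star$ is a generating OBS. This is where the normality of a $C^*$-algebra enters: if $-a \leq b \leq a$ in the self-adjoint part of a $C^*$-algebra, then $\|b\| \leq \|a\|$, so ${\mathcal{B}}$ is normal, and hence (by the general duality recalled in Subsection \ref{ss:intro}, following \cite{An,BR}) its dual is generating. I do not anticipate any serious obstacle: both implications reduce mechanically to the three parts of Theorem \ref{thm:scatter} together with these standard facts about $C^*$-algebras and Schauder duality.
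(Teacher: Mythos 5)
Your proposal is correct and follows exactly the route the paper intends: the paper derives the corollary "immediately" from Theorem \ref{thm:scatter}, with (2) $\Rightarrow$ (1) being the contrapositive via part (3), and (1) $\Rightarrow$ (2) splitting into the case ${\mathcal{B}}$ compact (part (2) with $E=\A$) and the case $\A$ scattered (part (1) applied to the adjoints, using that ${\mathcal{B}}^\star$ is generating and Schauder's theorem). Your explicit verifications of the generating property and of the positivity of the adjoints are exactly the routine details the paper leaves unstated.
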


It is easy to see that a von Neumann algebra is scattered
if an only if it is finite dimensional if and only if it
is compact. This leads to:

\begin{corollary}\label{cor:scat_vna}
If von Neumann algebra $\A$ and ${\mathcal{B}}$ are infinite dimensional,
then there exist $0 \leq T \leq S : \A \to {\mathcal{B}}$,
so that $S$ has rank $1$, while $T$ is not compact.
\end{corollary}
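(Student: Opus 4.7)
The plan is to reduce the corollary to part (3) of Theorem \ref{thm:scatter}. It suffices to verify the assertion made immediately before the statement: for a von Neumann algebra $\A$, being scattered, being compact (as a $C^*$-algebra), and being finite-dimensional are mutually equivalent properties. Granting this, the assumption that $\A$ and ${\mathcal{B}}$ are infinite-dimensional immediately implies that $\A$ is not scattered and that ${\mathcal{B}}$ is not compact, and Theorem \ref{thm:scatter}(3) produces the desired operators $0 \leq T \leq S : \A \to {\mathcal{B}}$.

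The implications finite-dimensional $\Rightarrow$ compact $\Rightarrow$ scattered are immediate (the first from the structure of finite-dimensional $C^*$-algebras as direct sums of matrix algebras; the second by \cite[Exercise 4.7.20]{Dix}, as already noted in Subsection \ref{ss:C*alg}). For compact $\Rightarrow$ finite-dimensional in the von Neumann setting, I would invoke the structure theorem $\A = (\sum_{i \in I} K(H_i))_{c_0}$ for compact $C^*$-algebras and observe that a $c_0$-direct sum is unital precisely when the index set $I$ is finite and each summand $K(H_i)$ is itself unital, which forces every $H_i$ to be finite-dimensional.

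The one implication requiring genuine argument, and the step I expect to be the main obstacle, is scattered $\Rightarrow$ finite-dimensional for a von Neumann algebra. I would argue by contrapositive: assuming $\A$ is an infinite-dimensional von Neumann algebra, I would exhibit a self-adjoint element whose spectrum is uncountable, which by the Huruya--Jensen characterization \cite{Hu,Je} precludes $\A$ from being scattered. For this, a standard spectral-theoretic argument produces an infinite sequence $(p_n)_{n \in \N}$ of mutually orthogonal non-zero projections in $\A$: if the projection lattice of $\A$ were finite, then, by the Borel functional calculus, every self-adjoint element would be a finite linear combination of those projections, forcing $\A_{sa}$ and hence $\A$ itself to be finite-dimensional. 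Enumerating $\mathbb{Q} \cap [0,1]$ as $(q_n)$, the strong-operator convergent sum $a = \sum_n q_n p_n \in \A$ is self-adjoint with each $q_n$ an eigenvalue, so $\sigma(a) \supseteq \overline{\{q_n\}_n} = [0,1]$, which is uncountable, as required. With the equivalence established, the corollary follows by direct citation of Theorem \ref{thm:scatter}(3).
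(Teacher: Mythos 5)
Your route is exactly the paper's: the paper deduces the corollary from Theorem \ref{thm:scatter}(3) by asserting, without proof, that a von Neumann algebra is scattered iff compact iff finite dimensional, and your proposal simply supplies the details of that equivalence. Your treatment of ``compact $\Rightarrow$ finite dimensional'' (unitality of a $c_0$-sum) and your construction of a self-adjoint element with spectrum containing $[0,1]$ are both correct; the Huruya--Jensen criterion then does the rest.

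One step is justified insufficiently as written: you need an \emph{infinite mutually orthogonal} family of nonzero projections, but your argument only shows that the projection lattice of $\A$ is infinite, which is strictly weaker -- $M_n$ has uncountably many projections yet no orthogonal family with more than $n$ nonzero members. The fact you need is nevertheless true and standard; a quick patch is the dichotomy: either some self-adjoint $a \in \A$ has infinite spectrum, in which case an accumulation point of $\sigma(a)$ gives infinitely many disjoint open intervals meeting $\sigma(a)$, whose (nonzero, mutually orthogonal) spectral projections lie in $\A$; or every self-adjoint element of $\A$ has finite spectrum, and then $\A$ is finite dimensional (Kaplansky--Ogasawara), contradicting the hypothesis. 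With that repair your proof is complete and matches the paper's (unwritten) argument.
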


We establish similar results about preduals of von Neumann algebras.

\begin{lemma}\label{lem:C*alg_comp_int}
(1)
Suppose $\A$ is an atomic von Neumann algebra, and $E$ is a generating OBS.
% $C^{*}$ algebra such that $A^{*}$ has all its intervals  compact.
Then $0 \leq T \leq S: E \to \A_\star$, where $S$ is a compact operator,
implies $T$ is compact.

(2)
Suppose $\A$ is a von Neumann algebra, and $\A_I, \A_{II}, \A_{III}$ are
its summands of type $I$, $II$, and $III$, respectively. Suppose,
furthermore, that one of the three statements is true:
$(i)$ $\A_I$ is not atomic, $(ii)$ $\A_{II}$ is not empty,
$(iii)$ $\A_{III}$ is non-empty, and has separable predual.
Then there exists $0 \leq T \leq S: \A_\star \to \A_\star$,
so that $S$ is compact, and $T$ is not.
\end{lemma}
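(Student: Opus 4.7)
My plan is to reduce to a positive-solid statement, as in Proposition \ref{prop:compact2}, and then handle the $\ell_1$-sum structure of $\A_\star$ directly. Since $E$ is generating, $\ball(E) \subseteq \gen_E(\ball(E)_+ - \ball(E)_+)$, so it suffices to show $T(\ball(E)_+)$ is relatively compact. Because $0 \leq T \leq S$, we have $T(\ball(E)_+) \subseteq \psol(M)$, where $M := S(\ball(E)_+)$ is relatively compact by hypothesis. Using that $\A$ is atomic, identify $\A_\star$ with $(\sum_{i\in I} \is_1(H_i))_{\ell_1}$, a sum in which the positive cone is coordinate-wise. Given $\vr > 0$, relative compactness of $M$ in the $\ell_1$-sum produces a finite set $I_0 \subseteq I$ with $\sum_{i\notin I_0}\|x_i\|_1 < \vr/2$ for all $x \in M$, and coordinate-wise domination transfers the same tail bound to every element of $\psol(M)$. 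For each $i \in I_0$, Lemma \ref{lem:psol comp} applied inside the order continuous space $\is_1(H_i)$ yields relative compactness of the $i$-th coordinate image of $\psol(M)$. Assembling finitely many compact pieces with the uniform tail estimate produces a finite $\vr$-net for $\psol(M)$, so $T$ is compact.

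\textbf{Part (2).} In each of the three listed cases $\A$ is non-atomic in precisely the manner invoked by the proof of Proposition \ref{prop:compl L1}(2); that proposition then supplies a positive isometric order embedding $j : L_1(0,1) \to \A_\star$ together with a positive projection $P : \A_\star \to j(L_1(0,1))$. It therefore suffices to construct $0 \leq \tilde{T} \leq \tilde{S} : L_1(0,1) \to L_1(0,1)$ with $\tilde{S}$ of rank one but $\tilde{T}$ non-compact; setting $S := j\tilde{S}j^{-1}P$ and $T := j\tilde{T}j^{-1}P$ then produces the desired operators $\A_\star \to \A_\star$ with $0 \leq T \leq S$ and $S$ compact, and since $\tilde{T} = j^{-1}PTj$, compactness of $T$ would force compactness of $\tilde{T}$. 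Such a pair $(\tilde{T},\tilde{S})$ is furnished by Proposition \ref{prop:no OC*} applied with $\ce = {\mathcal{F}} = L_1(0,1)$: its K\"othe dual $L_\infty$ is not order continuous, and the order interval $[0,\one]$ is not compact (by the Rademacher argument used at the end of the proof of Theorem \ref{thm:ord int}).

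\textbf{Main obstacle.} The only real subtlety lies in part (1): Lemma \ref{lem:psol comp} is stated for non-commutative symmetric sequence spaces on $B(\ell_2)$, whereas for a general atomic $\A$ the predual $\A_\star$ is an $\ell_1$-sum of such spaces rather than one of them. The two-step scheme above -- tail control (which passes to $\psol$ because the cone is coordinate-wise) combined with the sequence-space statement in each coordinate -- bridges this gap. Part (2) is essentially a packaging step once the positive complementation of $L_1(0,1)$ inside $\A_\star$ is secured via Proposition \ref{prop:compl L1} and the $L_1(0,1)\to L_1(0,1)$ counterexample is extracted from Proposition \ref{prop:no OC*}.
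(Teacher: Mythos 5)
Your proof is correct, and part (2) follows the paper's route almost verbatim (the paper quotes the non-compact pair $0 \leq T_0 \leq S_0$ on $L_1(0,1)$ directly from Wickstead's paper rather than re-deriving it from Proposition \ref{prop:no OC*}, but that is the same construction). Part (1), however, is genuinely different from the paper's argument. The paper first upgrades $0 \leq T \leq S$ to weak compactness of $T$ via Theorem \ref{th:weak_comp} (which rests on Pfitzner's theorem on weak compactness in duals of $C^*$-algebras), and then invokes the SPSP of $\A_\star$ for atomic $\A$ (Theorem \ref{thm:ord int}) to convert relative weak compactness of $T(\ball(E)_+)$ into norm compactness. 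You instead argue total boundedness of $\psol(S(\ball(E)_+))$ directly, exploiting the concrete identification $\A_\star \cong (\sum_i \is_1(H_i))_{\ell_1}$: a uniform tail estimate over the index set (which passes to the positive solid because the trace norm is monotone on positive elements, coordinate by coordinate) plus Lemma \ref{lem:psol comp} in each coordinate $\is_1(H_i)$. Your route is more elementary in that it bypasses Pfitzner's theorem and the SPSP machinery entirely, at the cost of leaning on the explicit $\ell_1$-sum structure; the paper's route is softer and reuses its general weak-compactness domination theorem, which is why the authors can state it in two lines. Both arguments are sound, and your extension of Lemma \ref{lem:psol comp} from a single Schatten space to an $\ell_1$-sum of them is the one genuinely new step, carried out correctly.
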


\begin{proof}
(1) The weak compactness of $S$ implies, by Theorem~\ref{th:weak_comp},
the weak compactness of $T$. By Theorem~\ref{thm:ord int}, $\A_\star$
has the SPSP, hence $T(\ball(E)_+)$ is relatively compact. Thus,
$T(\ball(E))$ is relatively compact as well, hence $T$ is compact.

(2) It suffices to show that there exists an order isomorphism
$j: L_1(0,1) \to \A_\star$, so that there exists a positive
projection $P$ onto $\ran(j)$. Indeed, by \cite{Wi}, there exist
operators $0 \leq T_0 \leq S_0 : L_1(0,1) \to L_1(0,1)$, so that
$S_0$ is compact, and $T_0$ is not. Then $T = j T_0 j^{-1} P$
and $S = j S_0 j^{-1} P$ have the desired properties.
The existence of $j$ and $P$ as above follows from the proof of
Proposition \ref{prop:compl L1}.
\end{proof}

To establish Theorem \ref{thm:scatter}, we need a series of lemmas.

\begin{lemma}\label{lem:not_comp}
Suppose $\A$ is a $C^*$-algebra for which $\A^\star$ has non-compact
order intervals, and a Banach lattice $E$ is not order continuous.
Then there exist $0 \leq T \leq S : \A \to E$,
so that $S$ has rank $1$, while $T$ is not compact.
\end{lemma}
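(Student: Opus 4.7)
The plan is to factor the desired operator through $C([0,1])$, using Arzel\`a--Ascoli to produce non-compactness via lack of equicontinuity. First, I extract the ingredients. Since $E$ is not order continuous, I pick $e\in E_+$ and a pairwise disjoint sequence $(e_n)\subset[0,e]$ with $\|e_n\|\ge 1$. Since $\A^\star$ has non-compact order intervals, I pick $f\in\A^\star_+$ and $(f_n)\subset[0,f]$ with no norm-convergent subsequence; weak-$\star$ compactness of $[0,f]$ together with a standard separation argument lets me pass to a subsequence with $f_n\to f_\infty$ weak-$\star$ for some $f_\infty\in[0,f]$ and $\|f_n-f_m\|\ge\delta>0$ for all $n\ne m$.

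Next I would define a curve $\psi:[0,1]\to[0,f]$ by $\psi(0)=f_\infty$, $\psi(1/n)=f_n$, and linear interpolation on each $[1/(n+1),1/n]$; convexity of $[0,f]$ keeps $\psi$ inside $[0,f]$, and $\psi$ is weak-$\star$ continuous on $[0,1]$ but not norm continuous at $0$. Setting $\phi(a)(t):=\psi(t)(a)$ then defines a positive bounded operator $\phi:\A\to C([0,1])$ dominated by $a\mapsto f(a)\,\one$, and by Arzel\`a--Ascoli $\phi$ fails to be compact (failure of equicontinuity at $0$). I would then build a positive bounded $\iota:C([0,1])\to E$ by
\[
\iota(g):=g(0)\,e+\sum_{n=1}^{\infty}\bigl(g(1/n)-g(0)\bigr)\,e_n,
\]
whose series converges in norm because $\bigl\|\sum_{n=N+1}^{M}\alpha_n e_n\bigr\|\le\bigl(\sup_{N<n\le M}|\alpha_n|\bigr)\|e\|$ (using disjointness of $(e_n)$ and $\sum_{n=N+1}^{M}e_n\le e$) together with $g(1/n)-g(0)\to 0$.

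The main obstacle is verifying $\iota\ge 0$ without assuming $E$ is Dedekind complete, and I handle it by passing to $E^{\star\star}$: rewrite $\iota(g)=g(0)\bigl(e-\bigvee_n e_n\bigr)+\bigvee_n g(1/n)\,e_n$ there, where both summands are visibly $\ge 0$ when $g\ge 0$; the canonical isometric lattice embedding $E\hookrightarrow E^{\star\star}$ then transfers positivity back to $E$. Since $\iota(\one)=e$, positivity of $\iota$ gives $(\iota\circ\phi)(a)\le f(a)\,e$ for $a\ge 0$. Setting $T:=\iota\circ\phi$ and $S(a):=f(a)\,e$, I have $0\le T\le S$ with $\rank S=1$. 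For non-compactness of $T$ I argue dually: Hahn--Banach applied to the closed subspace $\mathbb{R}e_m\oplus\{y\in E:|y|\wedge e_m=0\}$ (in which every $e_n$ with $n\ne m$ lies by disjointness) yields $\phi_m\in E^\star$ with $\|\phi_m\|\le 1$ and $\phi_m(e_n)=\delta_{mn}$. A direct computation gives $T^\star\phi_m=f_m+(\phi_m(e)-1)f_\infty$; since $(\phi_m(e))$ is bounded in $\mathbb{R}$ and $(f_m)$ has no norm-convergent subsequence in $\A^\star$, neither does $(T^\star\phi_m)$, so $T^\star$ -- and hence $T$ -- is not compact.
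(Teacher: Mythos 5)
Your construction is correct and, once the $C([0,1])$ packaging is unwound, produces exactly the paper's operators: $T a = f_\infty(a)e + \sum_n (f_n - f_\infty)(a)e_n$ and $Sa = f(a)e$, with non-compactness of $T^\star$ detected on the same biorthogonal functionals $\phi_m$. The only cosmetic differences are the factorization through $C([0,1])$ and the detour through $E^{\star\star}$ to verify $\iota \geq 0$, where the paper simply observes that the partial sums $g(0)\bigl(e - \sum_{n\leq N} e_n\bigr) + \sum_{n\leq N} g(1/n)e_n$ are positive and invokes closedness of the cone.
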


\begin{proof}%[Proof of Lemma \ref{lem:not_comp}]
By \cite[Theorem 2.4.2]{M-N},
there exists $y \in E_+$, and normalized elements
$y_1, y_2, \ldots \in [0,y]$ with disjoint supports.
%  Thus, $B$
% contains an order bounded disjoint sequence, not convergent to $0$.
% Thus,
% there exists a disjoint normalized sequence $(y_i) \subset [0,y] \subset B$
% such that $\sum_{i=1}^{N}y_i<y$ for every $N \in \mathbb{N}$.
By our assumption there exist $\psi \in \A^\star_{+}$ and a sequence
$(\phi_i) \subset [0, \psi]$ which does not have convergent subsequences.
By Alaoglu\rq{s} theorem we may assume $\phi_i \to \phi$ in weak$^*$ topology.
Define  two operators via
\begin{displaymath}
Sx=\psi(x)y \text{ and } Tx=\phi(x)y+\sum_{n=1}^{\infty}{(\phi_n-\phi)(x)y_n}.
\end{displaymath}
Note that $T$ is well defined: $(\phi_n -\phi)(x) \to 0$ for all $x$, hence
\begin{displaymath}
\norm{\sum_{n=m+1}^{k}{(\phi_n-\phi)(x)y_n}} \leq
 \sup_{m > n}|(\phi_m-\phi)(x)|\norm{y}
{\underset{n \to \infty}{\longrightarrow}} 0 .
\end{displaymath}
% Thus $Tx \in \mathcal{A}$ for every $x>0$. Hence, $Tx \in \mathcal{A}$ for every $x \in \mathcal {A}$.
Moreover, for  any $x>0$ and $N \in \mathbb{N}$ we have
\begin{equation*}
 \phi(x)y+\sum_{n=1}^{N}{(\phi_n-\phi)(x)y_n}=
\phi(x)(y-\sum_{n=1}^{N}{y_n})+\sum_{n=1}^{N}{\phi_n(x)} y_n \ge 0 ,
\end{equation*}
and
\begin{eqnarray*}
  & \psi(x) y - \phi(x) y - \sum_{n=1}^N (\phi_n-\phi)(x)y_n = \\
  & \psi(x) y - \sum_{n=1}^n \phi_n(x) y_n -
     \phi(x) \big( y - \sum_{n=1}^N y_n \big) \geq \\
  & \big( \psi(x) - \phi(x) \big) \big( y - \sum_{n=1}^N y_n \big) .
\end{eqnarray*}
By sending $N$ to infinity, we obtain that $0 \le Tx \le Sx$ for every $x>0$.
Clearly, $\rank S = 1$. It remains to show that $T^\star$ is not compact.
Note that there exist norm one $f_1, f_2, \ldots \in E^\star$ so that
$f_n(y_m) = \delta_{nm}$.
% Note that $c_n = f_n(y)$ has absolute value not exceeding $2 \alpha^{-1}$.
It is easy to see that
$T^\star f = f(y) \phi + \sum_{n=1}^\infty f(y_n) (\phi_n - \phi)$, hence
$T^\star  f_m = (f_m(y) - 1) \phi + \phi_m$. The sequence
$(T^\star f_m)$ has no
convergent subsequences, since if it had, $(\phi_m)$ would have
a convergent subsequence, too. This rules out the compactness of $T^\star$.
\end{proof}

\begin{corollary}\label{cor:not_comp}
Suppose a $C^*$-algebra ${\mathcal{B}}$ is not compact,
and $\A^\star$ has non-compact order intervals. Then there exist
$0 \leq T \leq S : \A \to {\mathcal{B}}$,
so that $S$ has rank $1$, while $T$ is not compact.
\end{corollary}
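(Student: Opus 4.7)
The plan is to imitate the proof of Lemma \ref{lem:not_comp} almost verbatim, replacing the disjoint sequence $(y_n) \subset [0,y]$ in a Banach lattice by a sequence of mutually orthogonal positive elements of ${\mathcal{B}}$ obtained from continuous functional calculus. The first step is to manufacture this sequence together with its upper bound. Since ${\mathcal{B}}$ is not compact, Proposition \ref{prop:dual_alg} produces $c \in {\mathcal{B}}_+$ whose order interval $[0,c]$ is not compact, and, as in the proof of Proposition \ref{prop:alg_ideal}, the spectrum of $c$ must contain a strictly positive non-isolated point $\alpha$. I would then select mutually disjoint open subintervals $I_n \subset (\alpha/2, \|c\|]$ each meeting $\sigma(c)$ at a point $t_n$, and pick continuous $f_n \colon [0, \|c\|] \to [0,1]$ supported in $I_n$ with $f_n(t_n) = 1$. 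Setting $y_n := f_n(c)$ and $y := (2/\alpha) c$, continuous functional calculus gives $y_n \in {\mathcal{B}}_+$ with $\|y_n\| = 1$ and $y_n y_m = 0$ for $n \neq m$, while the pointwise estimate $\sum_{n=1}^N f_n(t) \leq 1 \leq (2/\alpha) t$ on $(\alpha/2, \infty) \cap \sigma(c)$ yields $\sum_{n=1}^N y_n \leq y$ in ${\mathcal{B}}$ for every $N$. Orthogonality also gives $\bigl\|\sum_n \beta_n y_n\bigr\| = \max_n |\beta_n|$ for finite sums, which is all the analytic control of the $y_n$ needed below.

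Next I would reproduce the construction of Lemma \ref{lem:not_comp} in this setting. The hypothesis on $\A^\star$ supplies $\psi \in \A^\star_+$ and $(\phi_n) \subset [0, \psi]$ without a convergent subsequence, and, as in the original lemma, Banach--Alaoglu lets me assume $\phi_n \to \phi$ weak$^*$. I would then define $Sx := \psi(x) y$, so that $\rank S = 1$, and
$$
Tx := \phi(x) y + \sum_n (\phi_n - \phi)(x) y_n .
$$
The tail estimate $\bigl\|\sum_{n=m+1}^k (\phi_n-\phi)(x) y_n\bigr\| = \max_{m < n \leq k} |(\phi_n - \phi)(x)|$ tends to zero by weak$^*$ convergence, so $T$ is well-defined and bounded. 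The same telescoping identities as in Lemma \ref{lem:not_comp}, now relying on $\sum_{n=1}^N y_n \leq y$ and on $0 \leq \phi, \phi_n \leq \psi$, then give $0 \leq T \leq S$.

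Finally, to see that $T$ is not compact, I would exhibit a sequence in $\ball({\mathcal{B}}^\star)$ whose images under $T^\star$ have no convergent subsequence. Evaluation at $t_n$ is a state of $C^*(c)$, whose Hahn--Banach extension yields $\omega_n \in \ball({\mathcal{B}}^\star)$ with $\omega_n(y_m) = f_m(t_n) = \delta_{nm}$. A direct computation gives
$$
T^\star \omega_n = \omega_n(y) \phi + \sum_m \omega_n(y_m)(\phi_m - \phi) = \bigl(\omega_n(y) - 1\bigr) \phi + \phi_n ,
$$
so, after passing to a further subsequence along which the bounded numerical sequence $\bigl(\omega_n(y)\bigr)$ converges, any norm-convergent subsequence of $(T^\star \omega_n)$ would force a norm-convergent subsequence of $(\phi_n)$, contradicting our choice. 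The main obstacle is the first step: producing an upper bound $y$ that lies in ${\mathcal{B}}$ itself (not merely in ${\mathcal{B}}^{\star\star}$) and dominates every partial sum $\sum_{n=1}^N y_n$. This is exactly why the $f_n$ must be supported in $(\alpha/2, \infty)$ and why $y$ is taken proportional to $c$ rather than to a spectral projection of $c$.
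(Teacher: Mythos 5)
Your overall construction is sound and is essentially the paper's: the paper proves the corollary by exhibiting, inside ${\mathcal{B}}$, a normalized disjoint positive sequence $y_i=\sigma_i(b)$ dominated by $2\alpha^{-1}b$ (via functional calculus on an element $b$ whose spectrum has a strictly positive non-isolated point $\alpha$), and then feeds the resulting non-order-continuous abelian sublattice into Lemma \ref{lem:not_comp}; you simply inline the proof of that lemma instead of quoting it. Your verifications of well-definedness, of $0\le T\le S$, and of the non-compactness of $T^\star$ via the functionals $\omega_n$ with $\omega_n(y_m)=\delta_{nm}$ are all correct.

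However, your first step contains a genuine error. From the non-compactness of ${\mathcal{B}}$, Proposition \ref{prop:dual_alg} does produce some $c\in{\mathcal{B}}_+$ with $[0,c]$ non-compact, but it does not follow that the spectrum of \emph{this particular} $c$ contains a positive non-isolated point: take ${\mathcal{B}}=B(H)$ with $H$ infinite dimensional and $c=\one$; then $[0,\one]=\ball(B(H))_+$ is not compact, while $\sigma(\one)=\{1\}$ consists of a single isolated point. Non-compactness of $[0,c]$ is equivalent (Proposition \ref{prop:comp_int}) to $c$ failing to be multiplication compact, which is a statement about a faithful representation of ${\mathcal{B}}$, not about the spectrum of $c$ alone, so the inference ``as in the proof of Proposition \ref{prop:alg_ideal}'' does not apply to your $c$. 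The correct move --- and the one the paper makes --- is to invoke \cite[Exercise 4.7.20]{Dix} directly: a $C^*$-algebra is compact if and only if every positive element has only isolated non-zero spectral points, so non-compactness of ${\mathcal{B}}$ yields \emph{some} $b\in{\mathcal{B}}_+$ whose spectrum has a strictly positive non-isolated point. Running your construction with this $b$ in place of $c$ repairs the argument; nothing after that first sentence needs to change.
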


\begin{proof}
By Lemma \ref{lem:not_comp}, it suffices to show that ${\mathcal{B}}$
contains a Banach lattice which is not order continuous.
By \cite[Exercise 4.7.20]{Dix}, ${\mathcal{B}}$ contains a positive
element $b$, whose spectrum contains a positive non-isolated point.
Then the abelian $C^*$-algebra ${\mathcal{B}}_0$, generated by $b$, is not order
continuous. Indeed, suppose $\alpha > 0$ is not an isolated point of
$\sigma(a)$. Then there exist disjoint subintervals
$I_i = (\beta_i,\gamma_i) \subset (\alpha/2, 3\alpha/2)$, so that
$\delta_i = (\beta_i + \gamma_i)/2 \in \sigma(b)$ for every $i \in \N$.
For each $i$, consider the function $\sigma_i$, so that
$\sigma_i(\beta_i) = \sigma_i(\gamma_i) = 0$,
$\sigma_i((\beta_i+\gamma_i)/2) = 1$, and $\sigma_i$
is defined by linearity elsewhere. Then the elements
$y_i = \sigma_i(b)$ belongs to ${\mathcal{B}}_0$, are
disjoint and normalized, and $y_i \leq y = 2 \alpha^{-1} b$.
\end{proof}

\begin{proof}[Proof of Theorem \ref{thm:scatter}]
(1) If $\A$ is scattered, then $\A^{\star\star}$ % $ = (\sum_{i \in I} B(H_i))_\infty$.
is atomic.
% The construction of the universal enveloping von Neumann algebra
% (as presented e.g.~in \cite[Section III.2]{Tak}) shows that $\A^\star$
% is isometrically order isomorphic to $(\sum_{i \in I} \is_1(H_i))_1$.
Now invoke Lemma \ref{lem:C*alg_comp_int}(1).

(2) By assumption, $M = S(\ball(E)_+)$ is relatively compact,
and $T(\ball(E)_+) \subset \psol(M)$. By Proposition \ref{prop:dual_alg},
$T(\ball(E)_+)$ is relatively compact.

(3) Combine Theorem \ref{thm:ord int} with
Corollary \ref{cor:not_comp}.
% with Lemma \ref{lem:C*alg_not_comp_int}.
%  Then $\phi$ extends to a state $\psi$
% on $\A$. Indeed, suppose $\psi$ is a Hahn-Banach extension of norm $1$.
% By passing to $(\psi^* + \psi)/2$, we can assume that $\psi$ is
% self-adjoint. To verify that $\psi$ is positive, take
% an approximate unit $(x_\lambda)$ in ${\mathcal{C}}$.
% Then $\lim_\lambda \phi(x_\lambda) = 1$
% (see e.g. \cite[Theorem 1.8.3]{Arv}).
% If $\psi$ is not positive, then there exists $a \in \ball(\A)_+$
% so that $\psi(a) < 0$. Then $\|x_\lambda - a\| \leq 1$,
% and $\psi(x_\lambda - a) > 1$ for $\lambda$ sufficiently large.
%
% Moreover, the proof the uniqueness of the
% predual of a von Neumann algebra (see e.g. \cite[Section III.3]{Tak})
% shows that, for a von Neumann algebra $B$, the order on $B_\star$ is
% uniquely determined by duality.
\end{proof}

\subsection{Comparisons with multiplication operators}\label{ss:mult_ops}

Suppose $\A$ is a $C^*$-subalgebra of $B(H)$, where $H$ is a Hilbert space.
For $x \in B(H)$ we define an operator $M_x : \A \to B(H) : a \mapsto x^* a x$.
In this section, we study domination of, and by, multiplication operators,
in relation to compactness. First, record some consequences of the results
from Section \ref{ss:intervals}.

\begin{proposition}\label{prop:mult from A}
Suppose $x$ is an element of a $C^*$-algebra $\A$.
\begin{enumerate}
\item
If $M_x$ is weakly compact, and $0 \leq T \leq M_x : \A \to \A$, then $T$ is compact.
\item
If $0 \leq M_x \leq S : \A \to \A$, and $S$ is weakly compact, then $M_x$ is compact.
\end{enumerate}
\end{proposition}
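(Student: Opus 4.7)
My plan for (1) is to combine Lemma~\ref{lem:M_a} with Proposition~\ref{prop:comp_int}. By Lemma~\ref{lem:M_a}, the weak compactness of $M_x$ is equivalent to $x$ being multiplication compact, which by the equivalences preceding that lemma means that $b \mapsto xb$ is weakly compact. Left-composing with the bounded map $c \mapsto x^*c$, I obtain that $b \mapsto (x^*x)b$ is weakly compact, so the positive element $x^*x \in \A$ is itself multiplication compact. Proposition~\ref{prop:comp_int} then yields that the order interval $[0, x^*x]$ is norm compact in $\A$. Now for any $a \in \ball(\A)_+$ the relation $a \leq \one$ (in the unitization, or in $\A^{\star\star}$) gives $0 \leq Ta \leq x^*ax \leq x^*x$, an inequality between elements of $\A$ that is therefore valid in $\A$. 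Consequently $T(\ball(\A)_+) \subset [0, x^*x]$ is relatively compact, and since any $C^*$-algebra is generating this forces $T$ to be compact.

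For (2) my plan is to invoke the domination theorem for weak compactness proved in Subsection~\ref{ss:WC} (Theorem~\ref{th:weak_comp}, already used in the proof of Lemma~\ref{lem:C*alg_comp_int}): since $0 \leq M_x \leq S$ and $S$ is weakly compact, $M_x$ is weakly compact as well. A second application of Lemma~\ref{lem:M_a} then upgrades this to norm compactness of $M_x$.

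The only real subtlety I foresee is in (1), namely justifying the step $x^*ax \leq x^*x$ when $\A$ is non-unital: one has to carry out the comparison with $\one$ in a unitization (or in $\A^{\star\star}$) and then observe that the resulting inequality, written in terms of $x^*ax$ and $x^*x$, lies inside $\A_+$ and so is valid in $\A$. Part (2) carries essentially no obstacle once the weak-compactness domination theorem is in hand; the whole argument is really a two-line application of Subsection~\ref{ss:WC} together with Lemma~\ref{lem:M_a}.
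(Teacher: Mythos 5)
Your argument is correct, and for part (1) it lands on the same key step as the paper: compactness of the order interval $[0,x^*x]$ via Proposition~\ref{prop:comp_int}, combined with the containment $T(\ball(\A)_+)\subseteq[0,x^*x]$ (your care about performing the comparison $a\leq\one$ in $\A^{\star\star}$ or the unitization is exactly right; the paper handles this by passing to the second adjoint at the outset). Where you diverge is in how the hypotheses are converted into compactness of that interval. The paper proves the exact identity $M_x(\ball(\A)_+)=[0,x^*x]$ --- the nontrivial inclusion uses the factorization $b^{1/2}=v\,(x^*x)^{1/2}$ with $v\in\ball(\A)$ together with the polar decomposition of $x$ --- so that weak compactness of $M_x$ (resp.\ of $S$) is transferred directly to the interval, and both parts are then handled by one mechanism. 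You never need that reverse inclusion: in (1) you extract multiplication compactness of $x^*x$ from Lemma~\ref{lem:M_a} and the Ylinen equivalences by composing the weakly compact map $b\mapsto xb$ with the bounded map $c\mapsto x^*c$, and in (2) you first invoke Theorem~\ref{th:weak_comp} (Pfitzner's theorem) to pass weak compactness from $S$ down to $M_x$ and then apply Lemma~\ref{lem:M_a} to upgrade to norm compactness. Both routes are legitimate and stay within the paper's toolkit (the paper itself forward-references Theorem~\ref{th:weak_comp} in the proof of Lemma~\ref{lem:C*alg_comp_int}); yours is arguably the more explicit for part (2), where the paper only says ``(2) is established similarly,'' while the paper's identity $M_x(\ball(\A)_+)=[0,x^*x]$ is the more reusable fact.
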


\begin{proof}
By passing to the second adjoint if necessary, we can assume $\A$ is a
von Neumann algebra. Note that $[0,x^*x] = M_x(\ball(\A)_+)$.
Indeed, if $a \in \ball(\A)_+$, then $0 \leq a \leq \one$, hence
$0 \leq M_x a \leq M_x \one = x^* x$, hence $M_x a \in [0, x^* x]$.
Next show that any $b \in [0, x^* x]$ belongs to $M_x a \in [0, x^* x]$.
By \cite[p. 11]{DixVNA}, there exists $v \in \ball(\A)$ so that
$b^{1/2} = v c$, where $c = (x^* x)^{1/2}$. Write $x = u c$,
where $u$ is a partial isometry from $(\ker x)^\perp$ onto
$\overline{\ran x}$. Then $c = u^* x = x^* u$, and therefore,
$b = M_x(u v^* v u^*)$.

Therefore, $M_x$ is (weakly) compact if and only if the interval
$[0,x^*x]$ is (weakly) compact. By Proposition \ref{prop:comp_int},
the compactness and weak compactness of $[0, x^*x]$
are equivalent. To establish (1), suppose
$0 \leq T \leq M_x$, and $M_x$ is weakly compact. Then
$T(\ball(\A)_+)$ is relatively compact, as a subset of $[0, x^* x]$.
Thus, $T$ is compact. (2) is established similarly.
\end{proof}

If the ``symbol'' $x$ of the operator $M_x$ comes from the ambient $B(H)$,
we obtain:

\begin{proposition}\label{prop:below_multipl}
Suppose $\A$ is an irreducible $C^*$-subalgebra of $B(H)$, $x \in B(H)$,
$M_x : \A \to B(H)$ is compact, and $0 \leq T \leq M_x$.
Then $T$ is compact.
\end{proposition}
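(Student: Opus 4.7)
The plan is to show that $T(\ball(\A)_+)$ is relatively norm-compact in $B(H)$; this suffices, since every element of $\A$ is a linear combination of four elements of $\ball(\A)_+$ with comparable norm, so compactness of $T$ on $\ball(\A)_+$ upgrades to compactness of $T$ on $\ball(\A)$. The initial reduction is easy: for any $a \in \ball(\A)_+$ one has $0 \leq a \leq \one$ in $B(H)$, hence
$0 \leq Ta \leq M_x a = x^* a x \leq x^* x$,
so $T(\ball(\A)_+) \subseteq [0, x^* x]$. Since order intervals in $B(H)$ are norm-closed, the problem reduces to verifying that the single interval $[0, x^* x]$ is norm-compact in $B(H)$.

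The key step is to transfer the compactness of $M_x$ from $\A$ to the larger algebra $B(H)$, and the tool for this is Kaplansky's density theorem. Because $\A$ is irreducible, $\A^{\prime\prime} = B(H)$, so $\ball(\A)_+$ is dense in $\ball(B(H))_+$ in the strong operator topology. Given any $b \in \ball(B(H))_+$, choose a net $(a_\alpha) \subseteq \ball(\A)_+$ with $a_\alpha \to b$ strongly; then $x^* a_\alpha x \to x^* b x$ in the weak operator topology. By hypothesis $M_x|_{\A}$ is compact, so the net $(x^* a_\alpha x) \subset M_x(\ball(\A)_+)$ admits a norm-convergent subnet, whose norm limit necessarily coincides with its weak limit $x^* b x$. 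Hence $x^* b x$ lies in the compact set $K := \overline{M_x(\ball(\A)_+)}^{\|\cdot\|}$. As in the proof of Proposition~\ref{prop:comp_int} (via Douglas' lemma / polar decomposition of $x$), $[0, x^* x] = \{x^* b x : b \in \ball(B(H))_+\}$, so $[0, x^* x] \subseteq K$. Being norm-closed inside the norm-compact $K$, the interval $[0, x^* x]$ is itself norm-compact, completing the proof.

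The substantive difficulty is exactly the second paragraph. The hypothesis gives compactness of $M_x$ only on $\A$, but to control $[0, x^*x]$ one needs all of $\ball(B(H))_+$; irreducibility of $\A$ enters decisively here via Kaplansky. One could alternatively repackage this as a proof that $x \in K(H)$ (so $x^* x \in K(H)$, and then $[0, x^* x] \subseteq K(H)$ is automatically compact by Proposition~\ref{prop:dual_alg} since $K(H)$ is a compact $C^*$-algebra), appealing to \cite{Yl68Suomi} to pass from ``$x^* x$ multiplication compact in $B(H)$'' to ``$x^* x \in K(H)$''; but this is no shorter than the direct argument above.
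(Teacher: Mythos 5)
Your proof is correct, but it takes a genuinely different route from the paper's. The paper first proves a stand-alone lemma (Lemma \ref{lem:compact}): for irreducible $\A$, the set $c^*\ball(\A)_+c$ is relatively compact if and only if $c$ is a compact operator on $H$; the irreducibility enters there through Kaplansky density in the \emph{negative} direction, producing a $1/3$-separated family in $c^*\ball(\A)_+c$ whenever $c$ is non-compact. Granted that lemma, the paper concludes $x\in K(H)$, hence $x^*x$ is multiplication compact in $B(H)$ and $[0,x^*x]$ is compact by Proposition \ref{prop:comp_int}, and finishes exactly as you do via $T(\ball(\A)_+)\subset[0,x^*x]$ and polarization. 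You instead bypass Lemma \ref{lem:compact} entirely: you use Kaplansky density in the \emph{positive} direction, approximating an arbitrary $b\in\ball(B(H))_+$ strongly by $a_\alpha\in\ball(\A)_+$, extracting a norm-convergent subnet of $(x^*a_\alpha x)$ from the relatively compact set $M_x(\ball(\A)_+)$, and identifying its limit with the WOT-limit $x^*bx$; combined with $[0,x^*x]=x^*\ball(B(H))_+x$ (which the paper itself establishes in the proof of Proposition \ref{prop:mult from A}, via \cite[p.~11]{DixVNA}, rather than in Proposition \ref{prop:comp_int}) this gives compactness of $[0,x^*x]$ directly. Each approach has its merits: yours is shorter and self-contained for this proposition, showing that compactness of $M_x$ on $\A$ automatically extends to $\ball(B(H))_+$; the paper's detour through Lemma \ref{lem:compact} yields the sharper characterization ($M_x$ compact iff $x\in K(H)$), which is then reused in the proof of Proposition \ref{prop:above_multipl}, where your softer argument would not suffice.
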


\begin{proposition}\label{prop:above_multipl}
Suppose $\A$ is an irreducible $C^*$-subalgebra of $B(H)$,
$S : \A \to B(H)$ is compact, $x \in B(H)$, and $0 \leq M_x \leq S$.
Then $M_x$ is compact.
\end{proposition}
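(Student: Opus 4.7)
My plan is to deduce that $x$ must itself be a compact operator on $H$, after which the compactness of $M_x$ follows easily. The main thrust goes through weak compactness: first I upgrade the domination hypothesis to get $M_x$ weakly compact, then I show that the order interval $[0, x^* x]$ is weakly compact, and finally I invoke Proposition \ref{prop:comp_int} to conclude $x \in K(H)$. Since $S$ is compact (hence weakly compact) and $0 \le M_x \le S$, the weak-compactness preservation theorem from Subsection \ref{ss:WC} (the same Theorem \ref{th:weak_comp} cited in the proof of Lemma \ref{lem:C*alg_comp_int}) yields that $M_x$ is weakly compact.

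The central point is passing from the weak compactness of $M_x$ to the weak compactness of $[0, x^* x]$. Because $\A$ is irreducible, $\A'' = B(H)$ by the bicommutant theorem, and Kaplansky's density theorem gives that $\ball(\A)_+$ is SOT-dense in $\ball(B(H))_+$. Write $\widetilde M_x : B(H) \to B(H)$, $b \mapsto x^* b x$; the identification used in the proof of Proposition \ref{prop:mult from A} shows $\widetilde M_x(\ball(B(H))_+) = [0, x^* x]$, and trivially $M_x(\ball(\A)_+) \subset [0, x^* x]$. Since $\widetilde M_x$ is SOT-continuous on bounded subsets (both one-sided multiplications by a fixed operator being SOT-continuous on bounded sets), $M_x(\ball(\A)_+)$ is SOT-dense in $[0, x^* x]$. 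For any $y \in [0, x^* x]$ I pick a net $y_\alpha \in M_x(\ball(\A)_+)$ with $y_\alpha \to y$ SOT, and then extract a weakly convergent subnet $y_{\alpha_\beta} \to y'$ from the weak relative compactness of $M_x(\ball(\A)_+)$. Since the weak topology of $B(H)$ refines WOT, $y_{\alpha_\beta} \to y'$ in WOT as well, while the SOT convergence of the original net gives $y_{\alpha_\beta} \to y$ in WOT; by Hausdorffness of WOT, $y' = y$. Thus $[0, x^* x]$ is contained in the weakly compact weak-closure of $M_x(\ball(\A)_+)$, and being weakly closed (as the intersection of the WOT-closed cones $\{y : y \ge 0\}$ and $\{y : y \le x^* x\}$) it is itself weakly compact. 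Proposition \ref{prop:comp_int}, applied to the $C^*$-algebra $B(H)$ and the positive element $x^* x$, then says that $x^* x$ is multiplication compact, which in the identity representation of $B(H)$ amounts to $x^* x \in K(H)$, and hence $x \in K(H)$.

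Finally, once $x$ is compact, $\widetilde M_x$ is WOT-to-norm continuous on bounded subsets of $B(H)$ (approximate $x$ in operator norm by finite-rank operators $F_n$; WOT convergence of a bounded net becomes norm convergence after composition on both sides with a fixed finite-rank operator, while the error from replacing $x$ by $F_n$ is of order $\|x - F_n\|$ uniformly in the net). Since $\ball(B(H))$ is WOT-compact by Alaoglu, $\widetilde M_x(\ball(B(H)))$ is norm compact, and restricting to $\A$ shows $M_x$ is compact. The main obstacle is the middle paragraph: extracting compactness of $x$ itself from the mere weak compactness of $M_x$ crucially uses both the irreducibility of $\A$, via Kaplansky density, and the $C^*$-algebraic fact from Proposition \ref{prop:comp_int} that an order interval $[0, a]$ is weakly compact iff it is compact iff $a$ is multiplication compact.
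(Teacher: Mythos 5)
Your proof is correct, but it takes a genuinely different route from the paper's. The paper argues directly and quantitatively: assuming (after polar decomposition and scaling) that $x\geq 0$ has a spectral projection $\chi_{(1,\infty)}(x)$ of infinite rank, it uses the spectral theorem and Kaplansky density to build a \emph{decreasing} chain $a_0\geq a_1\geq\cdots\geq a_n$ in $\ball(\A)_+$ with $\|x(a_{k-1}-a_k)x\|>2/3$; since the differences $a_i-a_j$ ($i<j$) are positive, domination yields $\|Sa_i-Sa_j\|\geq\|M_x(a_i-a_j)\|>2/3$, which for large $n$ contradicts the existence of a finite $1/3$-net for $S(\ball(\A)_+)$. (The monotone chain is exactly what lets one exploit $0\leq M_x\leq S$, which only controls $M_x$ on positive elements.) You instead pass through weak compactness: Theorem \ref{th:weak_comp}(2) makes $M_x$ weakly compact, your Kaplansky-plus-subnet argument correctly upgrades the relative weak compactness of $M_x(\ball(\A)_+)$ to weak compactness of the whole interval $[0,x^*x]$ in $B(H)$, and Proposition \ref{prop:comp_int} then forces $x^*x$ (hence $x$) to be compact; the final step, that $M_x$ is compact once $x$ is, is common to both arguments. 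Your route buys a strictly stronger theorem -- only the \emph{weak} compactness of $S$ is used, so the conclusion parallels Proposition \ref{prop:mult from A}(2) -- at the cost of leaning on the heavier machinery behind Theorem \ref{th:weak_comp} (Pfitzner's theorem) and Proposition \ref{prop:comp_int}, whereas the paper's argument is elementary and self-contained at this point.
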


\begin{remark}\label{rem:irreduc}
The irreducibility of $\A$ is essential here. Below we construct an abelian
$C^*$-subalgebra $\A \subset B(H)$, and operators $x_1, x_2 \in B(H)$,
so that $0 \leq M_{x_1} \leq M_{x_2}$, $M_{x_2}$ is compact, while
$M_{x_1}$ is not (here, $M_{x_1}$ and $M_{x_2}$ are viewed as
taking $\A$ to $B(H)$). By \cite{Wi}, there
exist operators $0 \leq R_1 \leq R_2 : C[0,1] \to C[0,1]$ so that
$R_2$ is compact, and $R_1$ is not. Let $\lambda$ be the usual
Lebesgue measure on $[0,1]$, and let $j : C[0,1] \to B(L_2(\lambda))$
be the diagonal embedding (taking a function $f$ to the multiplication
operator $\phi \mapsto \phi f$). By \cite[Theorem 3.11]{Pa}, $R_1$ and $R_2$
are completely positive. Thus, by Stinespring Theorem, these operators
can be represented as $R_i(f) = V_i^* \pi_i(f) V_i$ ($i = 1,2$),
where $\pi_i : C[0,1] \to B(H_i)$ are representations, and
$V_i \in B(L_2(\lambda), H_i)$. Let $H = L_2(\lambda) \oplus_2 H_1 \oplus_2 H_2$.
Then $\pi = j \oplus \pi_1 \oplus \pi_2 : C[0,1] \to B(H)$
is an isometric representation. Let $\A = \pi(C[0,1])$. Furthermore,
consider operators $x_1$ and $x_2$ on $H$, defined via
$$
x_1 = \begin{pmatrix}  0 & 0 & 0 \\ V_1 & 0 & 0 \\ 0 & 0 & 0  \end{pmatrix}
\, \, {\mathrm{and}} \, \,
x_2 = \begin{pmatrix}  0 & 0 & 0 \\ 0 & 0 & 0 \\ V_2 & 0 & 0  \end{pmatrix} .
$$
Then, for any $f \in C[0,1]$, $j R_i(f) = x_i^* \pi(f) x_i$.
% (here, we identify $f$ with the corresponding operator in
% $B(L_2(\lambda)) \subset B(H)$).
Considering $M_{x_1}$ and $M_{x_2}$ as
operators on $\A$, we see that $0 \leq M_{x_1} \leq M_{x_2}$,
$M_{x_2}$ is compact, and $M_{x_1}$ is not.
\end{remark}

% For $c \in B(H)_+$, set $[0,c] = \{x \in B(H) : 0 \leq x \leq c\}$.
% Clearly, $[0,c]$ is closed. 

The following lemma establishes
a criterion for compactness of $M_x$. This result may be known to experts,
but we could not find any references in the literature.

\begin{lemma}\label{lem:compact}
Suppose $\A$ is an irreducible $C^*$-subalgebra of $B(H)$, and $c \in B(H)$.
Then $c^* \ball(\A)_+ c$ is a relatively compact set if and only if
$c$ is a compact operator.
\end{lemma}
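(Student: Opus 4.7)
The plan is to prove both directions, with the ``only if'' part carrying most of the content. The ``if'' direction is routine: approximating $c$ in operator norm by finite-rank operators $c_n$, the sets $c_n^* \ball(\A)_+ c_n$ are bounded subsets of the finite-dimensional spaces $c_n^* B(H) c_n$, hence totally bounded, and the uniform estimate
$$\|c^* a c - c_n^* a c_n\| \leq \bigl(\|c\| + \|c_n\|\bigr) \|c - c_n\|$$
for $a \in \ball(\A)_+$ transfers total boundedness to $c^* \ball(\A)_+ c$.

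For the nontrivial direction, suppose $c^* \ball(\A)_+ c$ is relatively compact. The first step is to bootstrap from $\ball(\A)_+$ up to $\ball(B(H))_+$. Irreducibility of $\A$ gives $\A' = \C \one$, hence $\A'' = B(H)$, and Kaplansky's density theorem provides, for each $b \in \ball(B(H))_+$, a net $a_\lambda \in \ball(\A)_+$ with $a_\lambda \to b$ in the strong operator topology. The map $x \mapsto c^* x c$ is SOT-continuous on norm-bounded sets, so $c^* a_\lambda c \to c^* b c$ strongly. Because $(c^* a_\lambda c)$ lies in the relatively norm-compact set $c^* \ball(\A)_+ c$, a norm-convergent subnet exists, and by uniqueness of strong limits its norm limit must be $c^* b c$. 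Thus $c^* \ball(B(H))_+ c \subseteq \overline{c^* \ball(\A)_+ c}$, a compact set.

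The main obstacle is the second step: extracting compactness of $c$ from relative compactness of $c^* \ball(B(H))_+ c$. I would argue by contradiction, using that $c$ is compact iff $c^*$ is. Suppose $c^*$ is not compact; pick unit vectors $\eta_n \in H$ with $\|c^* \eta_n - c^* \eta_m\| \geq \delta > 0$ for $n \neq m$. The key choice is the rank-one projection $B_n = \eta_n \otimes \eta_n^* \in \ball(B(H))_+$, for which a direct computation yields $c^* B_n c = (c^*\eta_n) \otimes (c^*\eta_n)^*$. By Step~1 some subsequence $c^* B_{n_k} c$ converges in norm; being a limit of rank-at-most-$1$ positive operators, the limit has the form $v \otimes v^*$. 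The case $v = 0$ forces $\|c^*\eta_{n_k}\| \to 0$, immediately contradicting the separation. If $v \neq 0$, applying the norm convergence to $v/\|v\|$ gives $\langle v, c^*\eta_{n_k}\rangle\, c^*\eta_{n_k} \to \|v\|^2 v$; combining this with $\|c^*\eta_{n_k}\|^2 \to \|v\|^2$ yields $|\langle v, c^*\eta_{n_k}\rangle| \to \|v\|^2$, so a further subsequence extraction (to make the phase of $\langle v, c^*\eta_{n_k}\rangle$ converge) recovers norm convergence of $c^*\eta_{n_k}$ itself --- again contradicting the separation. Hence $c^*$ is compact, and so is $c$.
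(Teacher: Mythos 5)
Your proof is correct, and in the hard direction it takes a genuinely different route from the paper's. The paper argues contrapositively: after a polar-decomposition reduction to $c \geq 0$, it uses the spectral projection $\chi_{(1,\infty)}(c)$ (which has infinite rank when $c$ is not compact) to produce orthonormal vectors $\xi_i$ with $c\xi_i$ pairwise orthogonal, and then invokes the Kaplansky density theorem to manufacture, for each $n$, elements $a_1,\dots,a_n \in \ball(\A)_+$ with $\|c(a_i-a_j)c\| > 1/3$, so that $c\,\ball(\A)_+\,c$ admits no finite $1/6$-net. You instead argue directly: Kaplansky density together with the SOT-continuity of $x \mapsto c^*xc$ and the assumed norm-compactness show that $c^*\ball(B(H))_+c$ lies in the (compact) closure of $c^*\ball(\A)_+c$, after which testing against the rank-one projections $\eta_n\otimes\eta_n^*$ reduces everything to an elementary fact about norm-convergent sequences of positive operators of rank at most one. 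Both proofs use irreducibility only through Kaplansky's theorem. Your bootstrapping step is a clean modular reduction that dispenses with the polar decomposition and the spectral analysis of $c$; the paper's more quantitative construction is, however, designed to be recycled (with a nested chain $a_0 \geq a_1 \geq \cdots$) in the proof of Proposition \ref{prop:above_multipl}, where a soft compactness argument like yours would not adapt as readily. For the easy direction you give a self-contained finite-rank approximation, whereas the paper cites Ylinen's result that $c\,\ball(B(H))\,c$ is relatively compact for compact $c$; both are fine.
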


\begin{proof}
By polar decomposition, it suffices to consider the case of $c \geq 0$.
Indeed, write $c = d u$, where $d = (c c^*)^{1/2}$, and $u$ is a
partial isometry from $(\ker c)^\perp = \overline{\ran c^*}$ to
$(\ker c^*)^\perp = \overline{\ran c}$. Then $M_c = M_u M_d$,
and $M_d = M_{u^*} M_c$ (here, we abuse the notation slightly, and
allow $M_u$ and $M_{u^*}$ to act on $B(H)$). Therefore, the sets
$c^* \ball(\A)_+ c = M_c(\ball(\A)_+)$ and
$d \ball(\A)_+ d = M_d(\ball(\A)_+)$ are compact simultaneously.
% and furthermore, $M_u$ is as isometry on

If $c$ is compact, then, by \cite{Yl72PAMS}, $c \ball(B(H)) c$ is relatively
compact. The set $c \ball(\A)_+ c$ is also relatively compact, since
it is contained in $c \ball(B(H)) c$.

Now suppose $c$ is not compact. By scaling, we
can assume that the spectral projection $p = \chi_{(1,\infty)}(c)$ has
infinite rank. We shall show that, for every $n \in \N$, there exist
$a_1, \ldots, a_n \in \ball(\A)_+$ so that
$\|c(a_i - a_j)c\| > 1/3$ for $i \neq j$. Note first that there exist
mutually orthogonal unit vectors $\xi_1, \ldots, \xi_n$ in $\ran p$,
so that $\langle \xi_i, \xi_j \rangle = \langle c \xi_i, c \xi_j \rangle = 0$
whenever $i \neq j$. Indeed, if $\sigma(c) \cap (1,\infty)$ is infinite,
then there exist disjoint Borel sets $E_i \subset (1,\infty)$ ($1 \leq i \leq n$),
so that $\sigma(c) \cap E_i \neq \infty$. Then we can take $\xi_i \in \chi_{E_i}(c)$.
On the other hand, if $\sigma(c) \cap (1,\infty)$ is finite, then for some
$s \in \sigma(c) \cap (1,\infty)$, the projection $q = \chi_{\{s\}}(c)$
has infinite rank. Then we can take $\xi_1, \ldots, \xi_n \in \ran q$.

Let $\eta_i = c \xi_i/\|c \xi_i\|$ (by construction,
these vectors are mutually orthogonal).
As $\A$ is irreducible, its second commutant is $B(H)$. By
Kaplansky Density Theorem (see e.g.~\cite[Theorem I.7.3]{Da}),
$\ball(\A)_+$ is strongly dense in $\ball(B(H))_+$.
% We use this fact to ``approximate'' the projections onto
% $\span[\eta_i]$, where $\eta_i = c \xi_i/\|c \xi_i\|$.
Thus, for every $1 \leq i \leq n$ there exist
$a_i \in \ball(\A)_+$ so that $\|a_i \eta_k\| < 1/3$ for $i \neq k$, and
$\|a_i \eta_i - \eta_i\| < 1/3$. Consider
$b_i = c a_i c \in c (\ball(\A)_+) c$.
For $i \neq j$,
$$
\|b_i - b_j\| \geq \langle c (a_i - a_j) c \xi_i, \xi_i \rangle =
\|c \xi_i\|^2 \langle(a_i - a_j) \eta_i, \eta_i\rangle >
\frac{2}{3} - \frac{1}{3} = \frac{1}{3} .
$$
As $n$ is arbitrary, we conclude that $c (\ball(\A)_+) c$ is not
relatively compact.
\end{proof}

\begin{proof}[Proof of Proposition \ref{prop:below_multipl}]
Suppose $x \in B(H)$ is such that $M_x : \A \to B(H)$ is compact.
By polar decomposition, we can assume that $x \geq 0$.
Then $x \ball(A)_+ x$ is relatively compact, and therefore,
By Lemma \ref{lem:compact}, $x$ is a compact operator.
By Proposition \ref{prop:comp_int}, $[0, x^2]$ is compact.
But $T(\ball(\A)_+) \subset [0,x^2]$, hence $T(\ball(\A)_+)$
is relatively compact. By polarization, $T(\ball(\A))$ is compact.
\end{proof}

To prove Proposition \ref{prop:above_multipl}, we need a technical result.

\begin{lemma}\label{lem:comparison}
Suppose $z \in B(H)$, and $x, y \in [0, \one_H]$. Then
$z x z^* \geq zxyxz^*$.
\end{lemma}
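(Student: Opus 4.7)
The plan is to reduce the inequality $zxz^* \geq zxyxz^*$ to showing the scalar-free positivity $x - xyx \geq 0$ in $B(H)$. Conjugation by an arbitrary element of $B(H)$ preserves positivity: if $a \geq 0$, write $a = b^*b$, and then $zaz^* = (bz^*)^*(bz^*) \geq 0$. So it suffices to verify $x \geq xyx$ for $x, y \in [0, \one_H]$, and then apply this principle to $a = x - xyx$ and to $z$.

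To prove $x \geq xyx$, I would chain two elementary comparisons:
\begin{equation*}
xyx \;\leq\; x^2 \;\leq\; x.
\end{equation*}
For the first, since $y \leq \one_H$ we have $\one_H - y \geq 0$, and sandwiching a positive element by the self-adjoint $x$ preserves positivity, so $x(\one_H - y)x = x^2 - xyx \geq 0$. For the second, since $x \in [0,\one_H]$ we have $\one_H - x \geq 0$, hence $x - x^2 = x^{1/2}(\one_H - x)x^{1/2} \geq 0$. Combining these gives $x - xyx \geq 0$, and conjugating by $z$ yields $zxz^* - zxyxz^* \geq 0$, as required.

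There is no substantive obstacle here: the argument is a short chain of standard $C^*$-algebraic inequalities. The only points to state cleanly are (i) the stability of the positive cone of $B(H)$ under maps $a \mapsto zaz^*$, and (ii) the two basic facts that $0 \leq y \leq \one$ gives $xyx \leq x^2$ and $0 \leq x \leq \one$ gives $x^2 \leq x$, both of which follow from sandwiching $\one - y$ and $\one - x$ by suitable self-adjoints.
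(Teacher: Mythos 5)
Your proof is correct and follows essentially the same route as the paper: the paper writes $zxz^* - zxyxz^* = z(x-x^2)z^* + zx(\one-y)xz^*$, which is precisely your decomposition $x - xyx = (x - x^2) + x(\one - y)x$ conjugated by $z$. The only cosmetic difference is that you perform the conjugation by $z$ as a final step rather than carrying it through the identity.
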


\begin{proof}
Note that $z x z^* - z x y x z^* = z (x - x^2) z^* + z x (\one - y) x z^*$,
and both terms on the right are positive.
\end{proof}

\begin{proof}[Proof of Proposition \ref{prop:above_multipl}]
As in the proof of Proposition \ref{prop:below_multipl}, we can assume
that $x \geq 0$, and that $p = \chi_{(1,\infty)}(x)$ is a
projection of infinite rank. It suffices to show that
there exist $a_0 \geq a_1 \geq \ldots \geq a_n$ in $\ball(\A)_+$,
so that $\|x(a_{k-1} - a_k)x\| > 2/3$ for $1 \leq k \leq n$.
Indeed, if $S$ is compact, then there exist
$u_1, \ldots, u_m \in B(H)$, so that for every $a \in \ball(\A)_+$
there exists $j \in \{1, \ldots, m\}$ so that $\|Sa - u_j\| < 1/3$.
By the pigeon-hole principle, if $n > m$, there exist
$i < j$ in $\{1, \ldots, n\}$ and $k$ in $\{1, \ldots, m\}$, so that
$\max\{\|Sa_i - u_k\|, \|S a_j - u_k\|\} < 1/3$. However,
$\|S a_i - S a_j\| \geq \|x(a_i - a_j)x\| > 2/3$, leading to a contradiction.

Imitating the proof of Proposition \ref{prop:below_multipl}, we use the
spectral decomposition of $x$ to find mutually orthogonal unit vectors
$\xi_1, \ldots, \xi_n$ in $\ran p$, so that (i) $x^k \xi_i$ is orthogonal
to $x^\ell \xi_j$ for any $i \neq j$, and $k, \ell \in \{0,1,\ldots\}$, and
(ii) for any $i$,
$1 = \|\xi_i\| \leq \|x \xi_i\| \leq \|x^2 \xi_i\| \leq \ldots$.
To construct $a_0, \ldots, a_n$, let $c = (2/3)^{1/(2n+1)}$, and
let $\eta_i = x \xi_i/\|x \xi_i\|$. By Kaplansky Density Theorem,
for $0 \leq k \leq n$ there exist $b_k \in \ball(\A)_+$, so that
$$
b_k \eta_i = \left\{ \begin{array}{ll}
   c \eta_i  &  1 \leq i \leq n-k   \\
   0        &  i > n-k
\end{array} \right.
$$
(we can take $b_n = 0$). Let $a_0 = b_0$, $a_1 = b_0 b_1 b_0$,
$a_2 = b_0 b_1 b_2 b_1 b_0$, etc.. By Lemma~\ref{lem:comparison},
$a_0 \geq a_1 \geq \ldots \geq a_n$. Furthermore,
$$
a_k \eta_i = \left\{ \begin{array}{ll}
   c^{2k-1} \eta_i  &  1 \leq i \leq n-k   \\
   0               &  i > n-k
\end{array} \right. ,
$$
and therefore,
$$  \eqalign{
\|x(a_{k-1} - a_k)x\|
&
\geq
 \langle x (a_{k-1} - a_k) x \xi_{n-k+1}, \xi_{n-k+1} \rangle
\cr
&
=
 \langle (a_{k-1} - a_k) \eta_{n-k+1}, \eta_{n-k+1} \rangle =
 c^{2k-1} > \frac{2}{3} .
}  $$
Therefore, the sequence $(a_k)_{k=0}^n$ has the desired properties.
\end{proof}

\subsection{Dunford-Pettis Schur multipliers}\label{ss:DP_mult}

Recall that a map $T : \is_{\mathcal{F}} \to \se$ is called a
\emph{Schur} (or \emph{Hadamard}) \emph{multiplier} if it
can be written in the coordinate form, as $(T x)_{ij} = \phi_{ij} x_{ij}$.
The infinite matrix $\phi$ is called the \emph{symbol} of $T$,
which we denote by $\schu_\phi$. The main goal of this section
is to prove:

\begin{theorem}\label{thm:sch_mult}
Suppose $0 \leq \schu_\phi \leq \schu_\psi$ are Schur multipliers
from $\is_1$ to $\se$ ($\ce$ is a symmetric sequence space).
If $\schu_\psi$ is Dunford-Pettis, then
the same is true for $\schu_\phi$.
\end{theorem}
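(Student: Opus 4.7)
The goal is to show that for every weakly null sequence $(x_n) \subset \is_1$, the sequence $\schu_\phi(x_n)$ tends to $0$ in the norm of $\se$.

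First, I would reduce to the self-adjoint case. The involution $x \mapsto x^*$ on $\is_1$ is anti-linear and weakly continuous (being the pre-adjoint of the involution on $B(H) = (\is_1)^\star$), so $\Re(x_n) = (x_n + x_n^*)/2$ and $\Im(x_n) = (x_n - x_n^*)/(2i)$ are weakly null self-adjoint sequences. Because the PSD symbol $\phi$ is Hermitian, $\schu_\phi$ respects this decomposition, and it suffices to treat the case $x_n = x_n^*$. Using the Jordan decomposition $x_n = x_n^+ - x_n^-$, the positivity of both $\schu_\phi$ and $\schu_\psi - \schu_\phi$ gives $0 \leq \schu_\phi(x_n^\pm) \leq \schu_\psi(x_n^\pm)$. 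Since $-a \leq b \leq a$ in $\se$ implies $\mu_b \leq \mu_a$ and hence $\|b\|_\ce \leq \|a\|_\ce$, the triangle inequality yields
\[
\|\schu_\phi(x_n)\|_\ce \;\leq\; \|\schu_\psi(x_n^+)\|_\ce + \|\schu_\psi(x_n^-)\|_\ce .
\]
The task therefore reduces to showing that each of the two norms on the right tends to $0$.

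The main obstacle is that the positive parts $(x_n^{\pm})$ are only bounded, not weakly null, in $\is_1$, so the DP hypothesis on $\schu_\psi$ cannot be applied to them directly. The hypothesis does however give
\[
\|\schu_\psi(x_n^+) - \schu_\psi(x_n^-)\|_\ce \;=\; \|\schu_\psi(x_n)\|_\ce \;\longrightarrow\; 0 ,
\]
so the task is to upgrade this ``difference'' estimate to a ``sum'' estimate. My plan is to combine three ingredients: (i) the Super Positive Schur Property of $\is_1$ (Proposition~\ref{prop:S_1_SPSP}), which rules out bounded positive sequences in $\is_1$ from being weakly null with non-trivial norm; (ii) a subsequence extraction via the weak-$*$ sequential compactness of balls in $\is_1 = K(H)^*$, along which both $x_n^+$ and $x_n^-$ converge weak-$*$ to a common positive element $a \geq 0$ (the weak-$*$ limits necessarily coincide since $x_n \to 0$ weak-$*$); and (iii) a gliding-hump argument in $\se$ along the lines of Proposition~\ref{prop:glid_hump}, exploiting the Arveson Radon--Nikodym theorem for completely positive maps, which gives Stinespring factorizations $\schu_\psi(x) = V^*(x\otimes I_K)V$ and $\schu_\phi(x) = V^*(x\otimes S)V$ for the same isometry $V$ and some contraction $0 \leq S \leq I_K$. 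The principal technical difficulty lies in transferring a disjointification of $(\schu_\psi(x_n^+))$ in $\se$ (produced via Proposition~\ref{prop:glid_hump}) back to a corresponding disjoint-like structure in $\is_1$; the resulting $\ell_1$-equivalent subsequence in $\is_1$ then clashes with the SPSP of $\is_1$ and the ``difference goes to zero'' estimate, yielding the required contradiction to the assumption $\|\schu_\psi(x_n^\pm)\|_\ce \not\to 0$.
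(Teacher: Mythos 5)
Your reduction collapses at the step ``The task therefore reduces to showing that each of the two norms on the right tends to $0$'': the statement $\|\schu_\psi(x_n^{\pm})\|_{\ce}\to 0$ is simply false for Dunford--Pettis $\schu_\psi$ and weakly null self-adjoint $(x_n)$, so you have reduced the theorem to something unprovable. Concretely, take $x_n=E_{1,n+1}+E_{n+1,1}$, which is weakly null in $\is_1$, and let $\schu_\psi$ be the Schur multiplier with symbol $\psi_{ij}=\delta_{ij}$ (the projection onto the main diagonal). This $\psi$ is formally positive and $\schu_\psi$ is Dunford--Pettis (it factors through the diagonal map $\is_1\to\ell_1$ and $\ell_1$ has the Schur property; alternatively every row and column of $\psi$ tends to $0$). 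But $x_n^{+}=\tfrac12(E_{11}+E_{1,n+1}+E_{n+1,1}+E_{n+1,n+1})$, so $\schu_\psi(x_n^{+})=\tfrac12(E_{11}+E_{n+1,n+1})$ has norm bounded below by $\tfrac12\|e_1\|_{\ce}>0$ for all $n$. No ``upgrade from difference to sum'' can repair this, because the sum estimate you are aiming for is not true; and the remainder of your plan (SPSP of $\is_1$, weak-$*$ limits of $x_n^{\pm}$, a Stinespring/Radon--Nikodym factorization, and an unspecified transfer of a disjointification from $\se$ back to $\is_1$) is a list of ingredients rather than an argument -- the ``principal technical difficulty'' you name is exactly the missing proof.

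The paper proceeds along entirely different lines, and much more concretely. It first characterizes Dunford--Pettis Schur multipliers $\is_1\to\se$: $\schu_\sigma$ is Dunford--Pettis iff every row and every column of the symbol tends to $0$ (Corollary \ref{cor:DP_Schur}, via the observation that the rows and columns of $\is_1$ are copies of $\ell_2$). If $\schu_\phi$ were not Dunford--Pettis while $\schu_\psi$ is, one extracts indices $n_0<n_1<\cdots$ with $|\phi_{n_0 n_k}|>c$ while $|\psi_{n_i n_j}|<10^{-2(i+j)}$ off the diagonal. Since positivity of a Schur multiplier means formal positivity of its symbol, the finite submatrices $C=(\phi_{n_in_j})$ and $D=(\psi_{n_in_j})$ are positive with $C\leq D$, and a quantitative finite-dimensional lemma about positive matrices (Lemma \ref{lem:matrices}) shows that a positive matrix with a large off-diagonal row cannot be dominated by a positive, essentially diagonal one -- a contradiction. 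If you want to salvage your approach, you would at minimum need to replace the false sum estimate by an argument exploiting the positivity of the symbols themselves, which is exactly where the paper's matrix lemma does the work.
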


Recall that an operator is called \emph{Dunford-Pettis} if it maps weakly
null sequences to norm null ones. Equivalently, it carries relatively
weakly compact sets to relatively norm compact sets. The reader is referred
to e.g.~\cite[Section 5.4]{AB} for more information.

The proof relies on several technical lemmas, which may be known to experts.

\begin{lemma}\label{lem:w_null_l1}
A bounded sequence $(x_n)$ in $\is_1$ is weakly null if and only if the following
two conditions are satisfied: (1) $\lim_m \sup_n \|R_m x_n\| = 0$, and
(2) for every $m$, $\lim_n \|Q_m x_n\| = 0$.
\end{lemma}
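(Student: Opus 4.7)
My plan is to use the duality $\is_1^\star = B(H)$ together with order continuity of $\is_1$ (Corollary~\ref{cor:ord cont}, Lemma~\ref{lem:ord cont}), decomposing each $x_n$ via the block structure induced by the projection $P_m$. For necessity, I would assume $(x_n)$ is weakly null; condition (2) follows immediately, since $Q_m : \is_1 \to Q_m(\is_1) \cong M_m$ is a bounded linear map into a finite-dimensional space, hence carries weakly null sequences to norm null ones. For (1), I would argue by contradiction via a sliding-hump construction: assuming $\sup_n \|R_m x_n\|_1 \not\to 0$, combine (2) with order continuity to extract $m_k \uparrow \infty$ and $n_k \uparrow \infty$ with $\|R_{m_k} x_{n_k}\|_1 > c$ for some $c > 0$, while $\|Q_{m_k} x_{n_k}\|_1$, $\|R_{m_{k+1}} x_{n_k}\|_1$, $\|x_{n_k} P_{m_{k+1}}^\perp\|_1$, and $\|P_{m_{k+1}}^\perp x_{n_k}\|_1$ are all less than $2^{-k}$. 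Then pick $v_k \in \ball(B(H))$ with $v_k = P_{m_k}^\perp v_k P_{m_k}^\perp$ and $|\tr(v_k R_{m_k} x_{n_k})| > c/2$, and truncate to $v_k' = E_k v_k E_k$ where $E_k = P_{m_{k+1}} - P_{m_k}$; the truncation error $\|R_{m_k} x_{n_k} - E_k x_{n_k} E_k\|_1$ is $O(2^{-k})$. Pairwise orthogonality of the $E_k$ makes $a := \sum_k v_k' \in \ball(B(H))$, and the pinching inequality gives $\sum_{j \neq k} |\tr(v_j' x_{n_k})| \leq \|Q_{m_k} x_{n_k}\|_1 + \|R_{m_{k+1}} x_{n_k}\|_1 = O(2^{-k})$, while $|\tr(v_k' x_{n_k})| > c/4$ for large $k$. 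Hence $|\tr(a x_{n_k})| \not\to 0$, contradicting weak nullity.

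For sufficiency, I would fix $a \in B(H)$ and $\vr > 0$, choose $m$ with $\|a\|\sup_n \|R_m x_n\|_1 < \vr/4$, and expand
\[
\tr(a x_n) = \tr(a Q_m x_n) + \tr(a R_m x_n) + \tr((P_m^\perp a P_m) x_n) + \tr((P_m a P_m^\perp) x_n)
\]
by cyclicity of the trace. The second term is at most $\vr/4$ uniformly in $n$, and the first vanishes in $n$ by (2). For the off-diagonal terms, the rank-$\leq m$ operator $c := P_m^\perp a P_m$ admits an SVD $c = \sum_{j=1}^m \lambda_j |\xi_j\rangle\langle\eta_j|$ with $\eta_j \in P_m H$, $\xi_j \in P_m^\perp H$, $|\lambda_j| \leq \|a\|$, so $\tr(c x_n) = \sum_j \lambda_j \langle\eta_j, x_n \xi_j\rangle$. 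I would split $\xi_j = P_{m'}\xi_j + P_{m'}^\perp \xi_j$: since $\eta_j = P_{m'}\eta_j$ for $m' \geq m$, the first piece equals $\langle\eta_j, Q_{m'} x_n \xi_j\rangle$ and vanishes in $n$ by (2), while the second is bounded by $M\|P_{m'}^\perp \xi_j\|$ (with $M = \sup_n \|x_n\|_1$), which tends to $0$ as $m' \to \infty$ because $\xi_j \in \ell_2$. Preselecting $m'$ large enough to handle all $m$ SVD terms and then letting $n \to \infty$ gives $\tr(c x_n) \to 0$; the $P_m a P_m^\perp$ term is handled symmetrically. Thus $\limsup_n |\tr(a x_n)| \leq \vr$, and $(x_n)$ is weakly null.

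The hard part will be the necessity of (1): the sliding-hump construction must simultaneously suppress the early block (via (2)), the deep tail (via order continuity), and the band-truncation error, and then the pairwise orthogonality of the $E_k$ is what keeps $\sum_k v_k'$ a bounded operator genuinely detecting the non-convergence of $(\tr(a x_{n_k}))$.
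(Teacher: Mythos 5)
Your proposal is correct and follows essentially the same route as the paper: finite rank of $Q_m$ for necessity of (2), a sliding-hump construction over orthogonal bands $E_k$ (the paper's block-diagonal pinching) for necessity of (1), and for sufficiency the decomposition of $\tr(ax_n)$ into the $Q_m$, $R_m$, and corner pieces, with the key point being the norm decay of $P_{m'}^\perp a P_m$ as $m' \to \infty$ (which the paper gets from $B(\ell_2)P_m$ being a Hilbert space and you get from the SVD of the rank-$m$ corner). The only cosmetic difference is that in the necessity of (1) you build an explicit functional $a = \sum_k v_k'$ witnessing non-weak-nullity, whereas the paper concludes by showing the extracted subsequence is equivalent to the $\ell_1$ basis; both rest on the same pinching estimate.
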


\begin{proof}
Suppose first $(x_n)$ is weakly null. As $Q_m$ has finite rank, (2) must be
satisfied. If (1) fails, then one can assume, by passing to a subsequence,
that there exists $c > 0$, and a sequence $n_1 < n_2 < \ldots$, so that,
for every $k$, $\|Q_{n_{k+1}} R_{n_k} x_k\| > c$, while
$\|R_{n_{k+1}} x_k\| + \|Q_{n_k} x_k\| < 10^{-k} c$.
Consider the block-diagonal truncation
$P : \is_1 \to \is_1 : x \mapsto \sum_k Q_{n_{k+1}} R_{n_k} x$.
Clearly, $P$ is contractive. Letting, for every $k$,
$y_k = Q_{n_{k+1}} R_{n_k} x_k$, we see that $\|P x_k - y_k\| < 10^{-k} c$.
Thus, for every sequence $(\alpha_k)$,
$$
\|\sum_k \alpha_k x_k\| \geq \|\sum_k \alpha_k y_k\| -
\sum_k |\alpha_k| \cdot 10^{-k} c > \frac{c}{2} \sum_k |\alpha_k| .
$$
Thus, the sequence $(x_k)$ is equivalent to the canonical basis of $\ell_1$,
hence not weakly null.

Now suppose (1) and (2) are satisfied for a bounded sequence $(x_n)$,
and show that, for any $f \in B(\ell_2)$, $\lim_n f(x_n) = 0$.
Indeed, otherwise, by passing to a subsequence, and by scaling,
we can assume that $\sup_n \|x_n\| \leq 1$, and
there exists $f \in \ball(B(\ell_2))$ so that
$\inf_n |f(x_n)| > c$.
% Furthermore, we can assume $\|x_n\| \leq 1$ for every $n$.
Pick $m$ so that $\sup_n \|R_m x_n\| < c/5$.
Note that there exists $M > m$ so that $\|(I - Q_M) (I - R_m) f\| < c/5$.
Indeed,
$$
(I - Q_M) (I - R_m) f = P_M^\perp f P_m + P_m f P_M^\perp .
$$
For a fixed $m$, $B(\ell_2) P_m$ is isomorphic to a Hilbert space.
For every $y \in B(\ell_2) P_m$, $P_M^\perp y \to 0$, hence
$\lim_M P_M^\perp f P_m = 0$. Similarly, $\lim_M P_m f P_M^\perp = 0$.

Finally, pick $N$ so that, for $n > N$, $\|Q_M x_n\| < c/5$. As
$$  \eqalign{
&
\langle f , x_n \rangle =
\Big\langle f ,\big( R_m + (I - R_m) Q_M + (I - Q_M) (I - R_m) \big) x_n \Big\rangle
\cr
&
=
\langle f, R_m x_n \rangle + \langle (I - R_m) f, Q_M x_n \rangle + 
\langle (I - Q_M) (I - R_m) f, x_n \rangle ,
}  $$
we have, for $n > N$,
$$
c < | \langle f, x_n \rangle |
\leq \|R_m x_n\| + 2 \|Q_M x_n\| + \|(I - Q_M) (I - R_m) f\| <
\frac{4c}{5} ,
$$
a contradiction.
\end{proof}

\begin{corollary}\label{cor:DP_from_S1}
An operator $T : \is_1 \to X$ is Dunford-Pettis if and only if,
for every $i$, the restrictions of $T$ to $\span[E_{ij} : j \in \N]$
and $\span[E_{ji} : j \in \N]$ are compact.
\end{corollary}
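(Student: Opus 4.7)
My strategy is to combine the characterization of weak nullity in $\is_1$ provided by Lemma \ref{lem:w_null_l1} with the elementary fact that a finite sum of compact operators is compact.

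For the forward direction, I would observe that $\span[E_{ij} : j \in \N] \subset \is_1$ consists of rank-one operators of the form $e_i \otimes \xi$, $\xi \in \ell_2$, and the $\is_1$-norm agrees with the $\ell_2$-norm under this identification. Hence the subspace is isometric to $\ell_2$, so reflexive, and its closed unit ball is weakly compact. As $T$ is Dunford-Pettis, it sends this weakly compact set to a norm compact set, which is to say that the restriction of $T$ to this subspace is compact. The argument for $\span[E_{ji} : j \in \N]$ is identical.

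For the backward direction, given a weakly null sequence $(x_n) \subset \ball(\is_1)$ and $\vr > 0$, I would apply Lemma \ref{lem:w_null_l1}(1) to choose $m \in \N$ with $\sup_n \|R_m x_n\| < \vr$, and decompose
\begin{equation*}
x_n = R_m x_n + \Phi_m x_n, \qquad \Phi_m x := P_m x + P_m^\perp x P_m = x - R_m x .
\end{equation*}
The range of $\Phi_m$ is contained in the subspace of matrices supported on the first $m$ rows together with the first $m$ columns. The crucial step is verifying that $T \Phi_m$ is compact: the operator $T P_m$ factors as $\sum_{i=1}^m T \circ (x \mapsto E_{ii} x)$, a finite sum where each summand is the composition of a bounded row projection with the restriction of $T$ to the $i$-th row, hence compact by hypothesis. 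Similarly, $x \mapsto T(P_m^\perp x P_m)$ decomposes as a finite sum of operators factoring through the column restrictions of $T$. Therefore $T \Phi_m$ is compact, and since $(x_n)$ is weakly null, $\|T \Phi_m x_n\| \to 0$. Combined with $\|T R_m x_n\| \leq \|T\| \vr$, this gives $\limsup_n \|T x_n\| \leq \|T\| \vr$, and letting $\vr \to 0$ completes the argument.

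There is essentially no conceptual obstacle, since Lemma \ref{lem:w_null_l1} already packages the key structural fact about weakly null sequences in $\is_1$: they split into a uniformly small tail in the $R_m$ direction and a piece concentrated on finitely many rows and columns, where the hypothesized compactness of $T$ is readily leveraged. The only technical point is unwinding $T\Phi_m$ as a finite sum of operators, each factoring through a single row or column.
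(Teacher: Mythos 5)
Your proof is correct, and both directions rest on the same two ingredients as the paper's (Lemma \ref{lem:w_null_l1} and the isometry of each row/column span with $\ell_2$), but the organization differs in a way worth noting. In the ``only if'' direction the paper runs a sequential gliding-hump argument: if the restriction to a row were not compact, one could extract a normalized block sequence $x_k$ supported in that row with $\|Tx_k\|$ bounded below, which is weakly null and so contradicts the Dunford--Pettis property; you instead invoke reflexivity of $\ell_2$ and the set-theoretic form of the Dunford--Pettis property (weakly compact sets go to norm compact sets), which the paper has already recorded as an equivalent formulation, so this is legitimate and slightly slicker. In the ``if'' direction the paper uses a three-term decomposition $I = R_m + (I-R_m)Q_M + (I-Q_M)(I-R_m)$ and must additionally choose $M$ so that the norm of $T$ on the tails $\span[E_{ij} : j > M]$, $i \leq m$, is small --- a quantitative fact about compact operators on $\ell_2$ that requires its own (implicit) justification. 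Your two-term decomposition $I = R_m + \Phi_m$ sidesteps this: writing $T\Phi_m = \sum_{i\leq m} T(E_{ii}\,\cdot\,) + \sum_{j\leq m} T(P_m^\perp\,\cdot\,E_{jj})$ exhibits it as a finite sum of compact operators, and then the standard fact that compact operators are completely continuous kills $\|T\Phi_m x_n\|$ outright. The trade-off is that you use ``compact $\Rightarrow$ Dunford--Pettis'' as a black box where the paper does the $\vr$-management by hand; your version is cleaner and avoids the tail-norm estimate entirely.
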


\begin{proof}
Suppose the restrictions of $T$ to $\span[E_{ij} : j \in \N]$
and $\span[E_{ji} : j \in \N]$ are compact, and $(x_n)$ is a
weakly null sequence in $\is_1$. We have to show that, for every
$c > 0$, $\|T x_n\| < c$ for $n$ large enough. Without loss of generality,
assume $T$ is a contraction, and $\sup_n \|x_n\| \leq 1$. Find $M > m$
so that $\sup_n \|R_m x_n\| < c/4$, and
$$
\|T|_{\span[E_{ij} : j > M]}\| + \|T|_{\span[E_{ji} : j > M]}\| < \frac{c}{4M} .
$$
Find $N \in \N$ so that $\sup_{n > N} \|Q_M x_n\| < c/4$.
Thus, for $n > N$, $\|T x_n\| < 3c/4$.

Conversely, suppose $T$ is Dunford-Pettis, but its restriction to
$\span[E_{ij} : j \in \N]$ is not compact. Then there exist
$n_1 < n_2 < \ldots$, and $\alpha_j \in \C$, so that the vectors
$x_k = \sum_{j = n_k+1}^{n_{k+1}} \alpha_j E_{ij}$, so that
$\|x_k\| = 1$, and $\limsup_k \|T x_k\| > 0$. However, the sequence
$(x_k)$ is weakly null, while the sequence $(T x_k)$ is not norm null,
yielding a contradiction. The restrictions to $\span[E_{ji} : j \in \N]$
are handled similarly.
\end{proof}

Specializing the previous result to Schur multipliers,
we immediately obtain:

\begin{corollary}\label{cor:DP_Schur}
A Schur multiplier with the symbol $\phi$, acting from $\is_1$
to $\se$, is Dunford-Pettis if and only if, for any $i$,
$\lim_j \phi_{ij} = \lim_j \phi_{ji} = 0$.
\end{corollary}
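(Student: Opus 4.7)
The plan is to invoke Corollary \ref{cor:DP_from_S1} and reduce the problem to a statement about diagonal operators on $\ell_2$. By Corollary \ref{cor:DP_from_S1}, $\schu_\phi$ is Dunford-Pettis if and only if for every $i \in \N$, both restrictions $\schu_\phi|_{\span[E_{ij} : j \in \N]}$ and $\schu_\phi|_{\span[E_{ji} : j \in \N]}$ are compact. So it suffices to show that for each fixed $i$, the restriction to the $i$-th row is compact iff $\lim_j \phi_{ij} = 0$, and likewise for the $i$-th column.

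First I would identify these row/column subspaces. For any fixed $i$, an element of the form $\sum_j \alpha_j E_{ij}$ is a rank-one operator whose unique nonzero singular value is $(\sum_j |\alpha_j|^2)^{1/2}$. Consequently, its norm in any symmetric Schatten space $\se$ (including $\is_1$) equals $\|e_1\|_\ce \cdot \|(\alpha_j)\|_{\ell_2}$, where $e_1$ is the first unit basis vector of $\ce$. Hence the map $(\alpha_j) \mapsto \sum_j \alpha_j E_{ij}$ is (up to a constant) an isometry from $\ell_2$ onto $\span[E_{ij} : j \in \N]$, both as a subspace of $\is_1$ and as a subspace of $\se$. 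An analogous statement holds for columns.

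Under these identifications, $\schu_\phi|_{\span[E_{ij} : j \in \N]}$ becomes the diagonal operator $D_i : \ell_2 \to \ell_2$, $(\alpha_j) \mapsto (\phi_{ij} \alpha_j)$. It is classical (and elementary) that such a diagonal operator on $\ell_2$ is compact if and only if its diagonal sequence tends to zero, i.e.\ $\lim_j \phi_{ij} = 0$. The column case is handled symmetrically, giving $\lim_j \phi_{ji} = 0$. Combining these equivalences with Corollary \ref{cor:DP_from_S1} yields the claim. There is no significant obstacle here; the only point requiring care is verifying that the row and column subspaces are isometric (up to a harmless constant) to $\ell_2$ in both the domain and the range, which follows from the rank-one structure of the basis matrices.
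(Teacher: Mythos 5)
Your argument is correct and follows essentially the same route as the paper: reduce via Corollary \ref{cor:DP_from_S1} to compactness of the row and column restrictions, identify $\span[E_{ij} : j \in \N]$ with $\ell_2$ isometrically in any symmetric Schatten space (by the rank-one structure of the basis matrices), and use the standard fact that a diagonal operator on $\ell_2$ is compact iff its diagonal tends to zero. No discrepancies to report.
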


\begin{proof}
By Corollary \ref{cor:DP_from_S1}, $\schu_\phi : \is_1 \to \se$ is
Dunford-Pettis iff, for every $i$, the restrictions of $\schu_\phi$ to
$\span[E_{ij} : j \in \N]$ and $\span[E_{ji} : j \in \N]$ are compact.
By the definition, $\schu_\phi$ maps $E_{ij}$ to $\phi_{ij} E_{ij}$.
It is well known that, for any $\ce$, $\span[E_{ij} : j \in \N] \subset \se$
is isometric to $\ell_2$, via an isometry sending the matrix units $E_{ij}$ to the
elements of the orthonormal basis. Thus, $\schu_\phi|_{\span[E_{ij} : j \in \N]}$ is
compact iff $\lim_j \phi_{ij} = 0$. Similarly,
$\schu_\phi|_{\span[E_{ji} : j \in \N]}$ is compact iff $\lim_j \phi_{ji} = 0$.
\end{proof}

\begin{lemma}\label{lem:matrices}
Suppose $c > 0$ and $m \in \N$ satisfy $(mc)^2 > m+1$. Suppose,
furthermore, that $C$ and $D$ are positive matrices, with entries
$C_{ij}$ and $D_{ij}$ ($0 \leq i,j \leq m$), respectively,
so that $\max_{i,j} \{\max\{ |C_{ij}|, |D_{ij}|\}\} \leq 1$,
$|C_{0j}| > c$ for $1 \leq j \leq m$, and
$|D_{ij}| < 10^{-2(i+j)}$ for $i \neq j$. Then
the inequality $C \leq D$ cannot hold.
\end{lemma}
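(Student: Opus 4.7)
The plan is to argue by contradiction: assuming $C \leq D$, I will derive incompatible upper and lower bounds on the operator norm $\|C\|$.

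First, I would bound $\|D\|$ from above via Gershgorin's theorem. The hypotheses $0 \leq D_{ii} \leq 1$ and $|D_{ij}| < 10^{-2(i+j)}$ for $i \neq j$ bound every row-sum of $|D|$:
\[
\sum_j |D_{ij}| \leq 1 + 10^{-2i} \sum_{k \geq 0} 10^{-2k} \leq 1 + \frac{1}{99},
\]
so $\|D\| \leq 1 + 1/99 < 2$. Since $0 \leq C \leq D$ gives $C \leq \|D\| I$, this forces $\|C\| \leq \|D\| < 2$.

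Next, I would extract a matching lower bound on $\|C\|$ from the largeness of the zeroth row of $C$. The key tool is the operator inequality $C^2 \leq \|C\|\cdot C$, valid for any positive $C$ since its spectrum lies in $[0,\|C\|]$. Evaluating at $e_0$ gives
\[
\sum_j |C_{0j}|^2 = \|Ce_0\|^2 = (C^2)_{00} \leq \|C\|\cdot C_{00},
\]
and by hypothesis the left side exceeds $C_{00}^2 + mc^2$. Cauchy--Schwarz for positive matrices, $|C_{0j}|^2 \leq C_{00} C_{jj}$, forces $C_{00} > 0$; dividing and applying AM--GM then yields
\[
\|C\| > C_{00} + \frac{mc^2}{C_{00}} \geq 2\sqrt{mc^2} = 2c\sqrt{m}.
\]
Combining with the Gershgorin bound produces $c\sqrt{m} < 1$, contradicting the hypothesis $(mc)^2 > m+1$, which rearranges to $c\sqrt{m} > \sqrt{1 + 1/m} > 1$.

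I expect the main subtlety to be extracting a sharp enough lower bound on $\|C\|$. The naive estimate $\|C\| \geq \|Ce_0\| > c\sqrt{m}$ on its own would only yield $c\sqrt{m} < 1 + 1/99$, which remains consistent with the hypothesis as $m \to \infty$. The factor of two gained from $C^2 \leq \|C\|\cdot C$ followed by AM--GM is precisely what closes the residual $1/99$ slack in the Gershgorin estimate.
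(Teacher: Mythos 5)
Your argument is correct and takes a genuinely different route from the paper. The paper's proof uses the block-matrix criterion: from $C\le D$ it extracts a contraction $U$ with $C=D^{1/2}U^*UD^{1/2}$, concludes that $\begin{pmatrix} D & C\\ C & D\end{pmatrix}\ge 0$, and then tests this against the single vector $\xi=(mc\,e_0,\,-\sum_{i=1}^m \omega_i e_i)$ with unimodular weights $\omega_i=C_{i0}/|C_{i0}|$; the three resulting terms give exactly $(mc)^2+(m+1)-2(mc)^2<0$. Your proof instead decouples the two hypotheses into a norm comparison: the off-diagonal decay of $D$ gives $\|D\|<2$ by Gershgorin, the large zeroth row of $C$ gives $\|C\|>2c\sqrt m$ via $C^2\le\|C\|\,C$ plus AM--GM, and $0\le C\le D$ forces $\|C\|\le\|D\|$. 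The doubling trick is exactly what you say it is -- the naive bound $\|C\|\ge\|Ce_0\|>c\sqrt m$ would not close the gap -- and it is a nice, modular alternative to the paper's test-vector computation; arguably it isolates more clearly which feature of $C$ and which feature of $D$ are in tension. One small repair is needed in your Gershgorin display: as written, $1+10^{-2i}\sum_{k\ge 0}10^{-2k}$ equals $1+100/99>2$ when $i=0$, so the chain $\le 1+\tfrac{1}{99}$ is literally false there. The fix is immediate: the off-diagonal sum in row $i$ runs over $j\ne i$, and every exponent $i+j$ with $j\ne i$ is at least $1$, with distinct values of $j$ giving distinct exponents, so $\sum_{j\ne i}|D_{ij}|<\sum_{k\ge 1}10^{-2k}=\tfrac{1}{99}$ for every row, and $\|D\|\le 1+\tfrac{1}{99}<2$ as you need. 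With that correction the proof is complete.
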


\begin{proof}
Suppose, for the sake of contradiction, that $D \geq C$.
Then, for any vector $\xi \in \ell_2^{m+1}$,
$$
\|D^{1/2} \xi\|^2 = \langle D^{1/2} \xi, D^{1/2} \xi \rangle =
\langle D \xi, \xi \rangle \geq \langle C \xi, \xi \rangle =
\|C^{1/2} \xi\|^2 ,
$$
hence there exists a contraction $U$ so that $U D^{1/2} \xi = C^{1/2} \xi$.
Thus, $C = D^{1/2} U^* U D^{1/2}$. By \cite[Lemma 1.21]{Zh}, the
block matrix $\begin{pmatrix} D & C \\ C & D \end{pmatrix}$ is positive.

Denote the canonical basis in $\ell_2^{m+1}$ by $(e_i)_{i=0}^m$. Consider
the vector $\xi = \begin{pmatrix} \xi_1 \\ \xi_2 \end{pmatrix} \in \ell_2^{2(m+1)}$,
where $\xi_1 = \alpha e_0$, and $\xi_2 = - \sum_{i=1}^m \omega_i e_i$.
Here, $\omega_i = C_{i0}/|C_{i0}|$, and
$\alpha = mc$. By the above,
\begin{equation}
0 \leq
\Big\langle \begin{pmatrix} D & C \\ C & D \end{pmatrix} \xi, \xi \Big\rangle =
\langle D \xi_1, \xi_1 \rangle + \langle D \xi_2, \xi_2 \rangle +
2 \re \langle C \xi_1, \xi_2 \rangle .
\label{eq:positive}
\end{equation}
Note that $\langle D \xi_1, \xi_1 \rangle = \alpha^2 D_{00} \leq \alpha^2$, and
$$
\langle D \xi_2, \xi_2 \rangle \leq
\sum_{i=1}^m D_{ii} + 2 \sum_{1 \leq i < j \leq m} |D_{ij}| \leq
m + 2 \sum_{1 \leq i < j \leq m} 10^{-2(i+j)} < m + 1 .
$$
On the other hand,
$$
\langle C \xi_1, \xi_2 \rangle =
- \alpha \sum_{i=1}^m C_{i0} \cdot \overline{\frac{C_{i0}}{|C_{i0}|}} <
- \alpha m c .
$$
Returning to \eqref{eq:positive}, we see that
$$
\Big\langle \begin{pmatrix} D & C \\ C & D \end{pmatrix} \xi, \xi \Big\rangle \leq
\alpha^2 + m + 1 - 2 \alpha mc < 0,
$$
a contradiction.
\end{proof}

\begin{proof}[Proof of Theorem \ref{thm:sch_mult}]
% \query{ have to explain the use of Theorem $3.7$ }
We say that an infinite matrix $\phi$ is \emph{formally positive}
if each of its finite submatrices is positive. By \cite[Theorem 3.7]{Pa},
$\schu_\sigma \geq 0$ iff $\sigma$ is formally positive.

Suppose, for the sake of contradiction, that
$0 \leq \schu_\phi \leq \schu_\psi$, where $\schu_\psi$ is
Dunford-Pettis, while $\schu_\phi$ is not. We can assume that
$\schu_\psi$ is contractive, hence, for any $(i,j)$,
$\max\{|\phi_{ij}|, |\psi_{ij}|\} \leq 1$.
Corollary \ref{cor:DP_Schur} shows that, for any
$i$, $\lim_{j \to \infty} \psi_{ij} = 0$. By rearranging rows
and columns if necessary, we can assume the existence of
$n_0 < n_1 < n_2 < \ldots$, so that $|\phi_{n_0 n_k}| > c > 0$.
Passing to further subsequence, we obtain
$|\psi_{n_i n_j}| < 10^{-2(i+j)}$ for $i \neq j$.

Now select $m$ so that $mc > 4(m+1)$, and define matrices $C$ and $D$,
with entries $C_{ij} = \phi_{n_i n_j}$ and $D_{ij} = \psi_{n_i n_j}$
($0 \leq i,j \leq m$), respectively. As noted above, the matrices
$C$ and $D$ are positive. By Lemma~\ref{lem:matrices}, we cannot
have $C \leq D$. Thus, a contradiction.
\end{proof}

\subsection{Weakly compact operators}\label{ss:WC}

In this section, we show that, under certain conditions, weak compactness
is inherited under domination.
First consider operators on $C^*$-algebras and their duals.
% The next theorem generalizes the result of \cite{Wi81}
% on the domination of weakly compact operators to the non-commutative
% setting.

% The next theorem extends the classical result that every operator
% dominated by a weakly compact operator from a $C(K)$ space to  any Banach
% lattice is weakly compact as well (to prove 
% Combination of Wickstead result and the
% fact that the dual of C(K) is order continuous)

\begin{theorem}\label{th:weak_comp}
Suppose $E$ is an OBS, and $\A$ is a $C^*$-algebra,
$S$ is a weakly compact operator, and one of the following holds:
\begin{enumerate}
\item
$E$ is generating, and $0 \leq T \leq S : E \to \A^\star$.
\item
$E$ is normal, and $0 \leq T \leq S : \A \to E$.
\end{enumerate}
Then $T$ is weakly compact.
\end{theorem}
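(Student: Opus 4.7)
The plan is to handle part (1) directly and to reduce part (2) to part (1) by taking adjoints. For part (1), since $E$ is generating, any element of $\ball(E)$ is, up to the factor $\gen_E$, a difference of two elements of $\ball(E)_+$, so it suffices to prove that $T(\ball(E)_+)$ is relatively weakly compact. From $0 \le T \le S$ I obtain the inclusion
$$
T(\ball(E)_+) \subseteq \psol\bigl(S(\ball(E)_+)\bigr),
$$
and $S(\ball(E)_+)$ is a relatively weakly compact subset of $\A^\star_+$, being contained in the relatively weakly compact set $S(\ball(E))$. Thus everything reduces to the following claim about the predual $\A^\star = (\A^{\star\star})_\star$ of the enveloping von Neumann algebra: \emph{the positive solid of a relatively weakly compact subset of $\A^\star_+$ is again relatively weakly compact.}

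To establish this claim, I would invoke the Akemann characterization of relative weak compactness in the predual of a von Neumann algebra $\mathcal{M}$: a bounded set $K \subseteq \mathcal{M}_\star$ is relatively weakly compact if and only if $\sup_{\phi \in K} \abs{\phi}(p_n) \to 0$ for every mutually orthogonal sequence of projections $(p_n) \subseteq \mathcal{M}$. If $K \subseteq \A^\star_+$ satisfies this condition and $\psi \in \psol(K)$, then $0 \le \psi \le \phi$ for some $\phi \in K$ forces $\abs{\psi}(p_n) = \psi(p_n) \le \phi(p_n)$, so the uniform vanishing property passes from $K$ to $\psol(K)$. Boundedness of $\psol(K)$ is automatic: $0 \le \psi \le \phi$ gives $\norm{\psi} = \psi(\one) \le \phi(\one) = \norm{\phi}$. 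This finishes part (1).

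For part (2), I dualize. The conjugate operators $T^\star, S^\star : E^\star \to \A^\star$ are positive with $0 \le T^\star \le S^\star$; since $E$ is normal, $E^\star$ is generating (as recorded in the preliminaries), and Gantmacher's theorem makes $S^\star$ weakly compact. Part (1) applied to the pair $(T^\star, S^\star)$ then yields weak compactness of $T^\star$, and a second application of Gantmacher gives weak compactness of $T$. The main obstacle in this plan is the positive-solid claim in part (1); although the Akemann characterization is classical, it is not recorded in the paper. As a fallback I would use Takesaki's criterion, which characterizes relative weak compactness in $\mathcal{M}_\star$ through uniform absolute continuity with respect to a single positive functional—a condition that is manifestly monotone under the order and hence passes to positive solids by the same argument.
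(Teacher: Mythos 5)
Your proof is correct, and part (2) coincides with the paper's (a one-line reduction to part (1) via adjoints and Gantmacher). For part (1), however, you take a genuinely different, though closely related, route. The paper argues by contradiction using Pfitzner's theorem (cited there as \cite{Pf}): if $T(\ball(E)_+)$ were not relatively weakly compact, there would exist a bounded sequence of pairwise orthogonal positive elements $(a_n)\subset\A$, functionals $\phi_n\in\ball(E)_+$, and $c>0$ with $T\phi_n(a_n)>c$; since $S-T\geq 0$ and $a_n\geq 0$, also $S\phi_n(a_n)>c$, contradicting the weak compactness of $S(\ball(E))$. You instead isolate the statement that the positive solid of a relatively weakly compact subset of $\A^\star_+$ is relatively weakly compact, and prove it by passing to $\A^\star=(\A^{\star\star})_\star$ and invoking Akemann's (or Takesaki's) characterization of relative weak compactness in a von Neumann algebra predual. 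Both arguments ultimately rest on the same phenomenon --- weak compactness in $\A^\star$ is detected by uniform vanishing along orthogonal families, a condition that is manifestly monotone on positive functionals --- but your version buys something: for sets of \emph{positive} functionals the absolute-value subtlety disappears, so the classical Akemann/Takesaki criterion suffices and the deep Pfitzner theorem is not needed; moreover your positive-solid lemma is a clean standalone statement that parallels Lemma \ref{lem:psol comp} and Proposition \ref{prop:dual_alg}(4) elsewhere in the paper. What the paper's route buys in exchange is brevity and the fact that it works directly with orthogonal elements of $\A$ itself rather than projections in the enveloping algebra. Your reduction of $\ball(E)$ to $\ball(E)_+$ via the generating constant, and the boundedness of the positive solid via $\|\psi\|=\psi(\one)\leq\phi(\one)=\|\phi\|$, are both fine.
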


Note that, for commutative $\A$, this theorem follows from
\cite{Wi81}, and the order continuity of $\A^\star$.

\begin{proof}
% By the definition of a generating OBS,
% it suffices to show that $T(\ball(E)_+)$
% is weakly compact. Suppose otherwise.
(1)
Suppose, for the sake of contradiction, that $T(\ball(E)_+)$
is not weakly compact.
By Pfitzner\rq{s} Theorem \cite{Pf},
there exist a bounded sequence $(a_n) \subset \A$
of positive pairwise orthogonal elements, a sequence
$(\phi_n) \subset \ball(E)_+$, and $c>0$, such that $T \phi_n (a_n) > c$.
Therefore, $S \phi_n(a_n) > c$,  which contradicts the weak compactness
of $S(\ball(E))$ (once again, by Pfitzner\rq{s} Theorem).

(2)
Apply part (1) to $0 \leq T^\star \leq S^\star$.
\end{proof}

\begin{remark}\label{rem:weak_comp}
Theorem \ref{th:weak_comp} fails for operators from duals
of $C^*$-algebras to $C^*$-algebras,
even in the commutative setting.
Indeed, by \cite[Theorem 5.31]{AB}, there exist
$0 \leq T \leq S : \ell_1 \to \ell_\infty$, so that $S$
is weakly compact, whereas $T$ is not.
% We can also notice that weakly compact operator from $S_1$ to normal OBS
% not necessarily dominates a weakly compact operator.
\end{remark}

For operators to or from general Banach lattices, we have:

\begin{theorem}\label{th:weak_dom}
Suppose either $(i)$ $A$ is a generating OBS,
and $B$ is order continuous Banach lattice,
or $(ii)$ $A$ is a Banach lattice with order continuous dual, and $B$ is an
normal OBS. If $0 \le T \le S:A \to B$, and $S$ is weakly compact,
then $T$ is weakly compact as well.
\end{theorem}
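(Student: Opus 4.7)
The plan is to reduce both cases to the classical Banach-lattice domination theorem for weakly compact operators, using the ``solid hull'' version of the Aliprantis--Burkinshaw theorem: if $B$ is an order continuous Banach lattice and $K\subseteq B$ is relatively weakly compact, then the solid hull of $K$ is relatively weakly compact (this is the key technical ingredient, essentially \cite[Theorem~4.39]{AB}).

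For part $(i)$, note that since $A$ is generating, every $a\in\ball(A)$ decomposes as $a_+-a_-$ with $a_\pm\in\gen_A\ball(A)_+$, so $\ball(A)\subseteq\gen_A(\ball(A)_+-\ball(A)_+)$. It therefore suffices to show that $T(\ball(A)_+)$ is relatively weakly compact in $B$: indeed, this will give the relative weak compactness of $T(\ball(A))\subseteq\gen_A\bigl(T(\ball(A)_+)-T(\ball(A)_+)\bigr)$. Now set $K:=S(\ball(A)_+)\subseteq S(\ball(A))$. Since $S$ is weakly compact, $K$ is relatively weakly compact. The inequality $0\le T\le S$ gives $T(\ball(A)_+)\subseteq\psol(K)$, which is contained in the (two-sided) solid hull of $K$ in $B$. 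By the order continuity of $B$, that solid hull is relatively weakly compact, and hence so is $T(\ball(A)_+)$.

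For part $(ii)$, the plan is to dualize and invoke part $(i)$. Pass to the adjoints $T^\star,S^\star\colon B^\star\to A^\star$; positivity of $T,S$ and the inequality $T\le S$ yield $0\le T^\star\le S^\star$ (check on $\phi\in B^\star_+$ and $a\in A_+$: $\langle T^\star\phi,a\rangle=\phi(Ta)\le\phi(Sa)=\langle S^\star\phi,a\rangle$). By Gantmacher's theorem, $S^\star$ is weakly compact. The normality of $B$ makes $B^\star$ a generating OBS (Subsection \ref{ss:intro}), while $A^\star$ is an order continuous Banach lattice by hypothesis. Thus part $(i)$ applied to $0\le T^\star\le S^\star\colon B^\star\to A^\star$ yields that $T^\star$ is weakly compact, and a second application of Gantmacher's theorem gives the weak compactness of $T$.

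The main obstacle, and really the only non-routine point in the argument, is the solid-hull statement in an order continuous Banach lattice used in part $(i)$; everything else is bookkeeping with the generating/normality constants and a standard duality via Gantmacher. Since that solid-hull fact is well documented in the Banach lattice literature, the proof reduces to a clean two-step scheme: prove $(i)$ directly, then derive $(ii)$ by taking adjoints.
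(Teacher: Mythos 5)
Your proposal is correct and follows essentially the same route as the paper: part $(i)$ is exactly the opening argument of \cite[Theorem~5.31]{AB} (reduce to $T(\ball(A)_+)$, observe it sits in the solid hull of the relatively weakly compact set $S(\ball(A)_+)$, and invoke the solid-hull theorem for order continuous lattices), adapted to a generating OBS domain, and part $(ii)$ is obtained by duality via Gantmacher, just as the paper indicates. The only difference is that you spell out the details the paper leaves implicit.
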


\begin{proof}
The proof of $(i)$ is contained in the first few lines of
the proof of \cite[Theorem~5.31]{AB}. $(ii)$ follows by
duality.
\end{proof}

% Next we deal with non-commutative sequence spaces.

Next we obtain a partial generalization of the above results for
non-commutative function spaces. In the discrete case, we obtain a
characterization of order continuous Banach lattices.

% \begin{question}
% Let  $0 \le T \le S: E \to \se(H)$, where $A$ is a generalized OBS.
% If $S$ is weakly compact, then is $T$ necessarily weakly compact as well?
% \end{question}

\begin{proposition}\label{prop:se_criterion}
Suppose $\ce$ is a symmetric sequence space, containing
a copy of $\ell_1$, $H$ is an infinite dimensional Hilbert space,
and $X$ is a Banach lattice.
% Let $\se(H)$ be a corresponding non-commutative space.
Then the following are equivalent:
\begin{enumerate}
\item If $0 \le T \le S:\se(H) \to X$, and $S$ is weakly compact,
 then $T$ is weakly  compact.
\item $X$ is order continuous.
\end{enumerate}
\end{proposition}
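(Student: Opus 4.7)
The plan is to handle the two implications separately, reducing each to results already in place.

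For $(2) \Rightarrow (1)$, I note that $\se(H)$, as a non-commutative symmetric sequence space, is a generating OBS with constant $2$ (this was observed for every non-commutative function space in the paragraph preceding Lemma \ref{lem:maps from S_E}). Since $X$ is an order continuous Banach lattice, the conclusion is immediate from Theorem \ref{th:weak_dom}(i).

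The converse $(1) \Rightarrow (2)$ will be proved by contrapositive. The strategy is first to construct a counterexample with domain $\ell_1$ and then transfer it to $\se(H)$. Assuming $X$ is not order continuous, \cite[Theorem 2.4.2]{M-N} furnishes $x \in X_+$ whose order interval $[0,x]$ fails to be weakly compact; since $[0,x]$ is norm closed and convex (hence weakly closed), Eberlein--Smulian provides a sequence $(x_n) \subset [0,x]$ with no weakly convergent subsequence. Writing $(e_n)$ for the canonical basis of $\ell_1$, I define $S_0, T_0 : \ell_1 \to X$ by $S_0 e_n = x$ and $T_0 e_n = x_n$. The inequalities $0 \le T_0 \le S_0$ follow from $0 \le x_n \le x$ on $(\ell_1)_+$; the operator $S_0$ has rank one and so is weakly compact; while $T_0(\ball(\ell_1)) \supseteq \{x_n\}$ is not relatively weakly compact, so $T_0$ is not weakly compact.

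To transfer this example to $\se(H)$, I exploit the assumption that $\ce$ contains $\ell_1$: Proposition \ref{prop:l_1_S_E_cont} supplies a lattice isomorphism $j : \ell_1 \to \se(H)$ together with a positive projection $P : \se(H) \to j(\ell_1)$. Setting $T = T_0 \circ j^{-1} \circ P$ and $S = S_0 \circ j^{-1} \circ P$ then yields $0 \le T \le S : \se(H) \to X$, with $S$ weakly compact (it factors through the weakly compact $S_0$) and, from $P \circ j = j$ and hence $T \circ j = T_0$, with $T$ necessarily not weakly compact. I do not foresee a genuine obstacle; the one point requiring care is selecting the classical Banach-lattice characterization of order continuity in the right form (via non-weakly-compact order intervals) so that a single uniform construction covers every non order continuous $X$, rather than only the concrete lattices (such as $\ell_\infty$ in Remark \ref{rem:weak_comp}) appearing earlier in the paper.
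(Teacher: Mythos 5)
Your proof is correct and follows essentially the same route as the paper: the forward implication via Theorem~\ref{th:weak_dom}, and the converse by transferring an $\ell_1 \to X$ counterexample through the positively complemented lattice copy of $\ell_1$ in $\se(H)$ supplied by Proposition~\ref{prop:l_1_S_E_cont}. The only difference is that you construct the $\ell_1 \to X$ example explicitly (the rank-one $S_0 e_n = x$ dominating $T_0 e_n = x_n$ for a sequence in a non-weakly-compact order interval), whereas the paper simply cites \cite[Theorem~5.31]{AB} for its existence.
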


\begin{proof} % [Proof of Proposition \ref{prop:se_criterion}]
$(2) \Rightarrow (1)$ follows from Theorem~\ref{th:weak_dom}.

$(1) \Rightarrow (2)$: By Proposition~\ref{prop:l_1_S_E_cont}
$\se(H)$ contains a positive disjoint sequence, that spans a positively
complemented copy of $\ell_1$. Hence, the result follows from
\cite[Theorem~5.31]{AB}.
\end{proof}

Now consider domination by a weakly compact operator for
non-commutative function spaces.
% We do not know whether the analogues of Propositions \ref{prop:l_1_S_E_cont}
% and \ref{prop:se_criterion} holds for non-commutative function
% spaces. However, we have:

Recall that a non-commutative symmetric function space $\ce$
is said to have the \emph{Fatou Property} (sometimes referred to
as the \emph{Beppo Levi Property}) if for any norm-bounded increasing net
$(x_i) \subset \ce_+$, there exists $x \in \ce$ so that $x_i \uparrow x$, and
$\|x\| = \sup_i \|x_i\|$. In the commutative setting, any symmetric
space with the Fatou Property is order complete.

We say that a non-commutative function space $\ce$ is a \emph{KB space}
if any increasing norm bounded sequence in $\ce$ is
norm-convergent. Equivalently, $\ce$ is order continuous, and
has the Fatou Property (see \cite{DdP11}). Furthermore, the following
are equivalent: (i) $\ce$ is a KB space, (ii) $\ce$ is weakly
sequentially complete, and (iii) $\ce$ contains no copy of $c_0$.
It is clear from \cite{DDdP93} that, if $\ce$ is symmetric KB function
space, then the same is true of $\ce(\tau)$.

% Note that, by \cite[Section II.4]{KPS}, any separable symmetric
% function space is order continuous, and strongly symmetric.
% Conversely, any order continuous function space is
% separable. On the other hand, a non-order continuous
% function space $\ce$ contains a copy of $\ell_\infty$ if it
% $\sigma$-complete \cite[Proposition 1.a.7]{LT2}, or
% strongly symmetric (a more general result is proved in
% \cite{DdP11}).

% By \cite{DDdP91} % (combined with \cite[Section II.4]{KPS}),
% for a strongly symmetric function space $\ce$, the following are
% equivalent: (i) $\ce$ is weakly sequentially complete;
% (ii) $\ce$ does not contain a copy of $c_0$;
% (iii) $\ce$ coincides with $\ce^{\times\times}$, and is order continuous;
% (iv) any increasing norm bounded sequence in $\ce$ converges.
% Note that any symmetric function space with the Fatou
% Property must satisfy (i)-(iv).

The following result is contained in \cite[Section 5]{DDdP93}.

\begin{proposition}\label{prop:DDdP}
Suppose $\ce$ is a non-commutative strongly symmetric
function space. Then:
\begin{enumerate}
\item
$\ce^\times$ is strongly symmetric,
\item
$\ce^\times$ coincides with $\ce^\star$ if and only if $\ce$ is order continuous.
In this case, for every $f \in \ce^\star$ there exists a unique $y \in \ce^\times$
so that $f(x) = \tau(xy)$, for any $x \in \ce$.
\item
$\ce$ coincides with $\ce^{\times\times}$ if and only if $\ce$ has the Fatou Property.
\end{enumerate}
\end{proposition}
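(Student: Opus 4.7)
The plan is to verify each of the three parts using classical Köthe duality adapted to the non-commutative setting. Since the proposition assembles standard facts cited from \cite{DDdP93}, my aim is to organize the key identities and indicate where non-commutativity is handled, rather than to rebuild the theory from scratch.

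For (1), I would start from the non-commutative H\"older-type trace inequality $|\tau(xy)| \leq \int_0^\infty \mu_x(s)\mu_y(s)\,ds$ and the definition $\|y\|_{\ce^\times} = \sup\{|\tau(xy)| : \|x\|_\ce \leq 1\}$. Given $y_1 \pprec y_2$ with $y_2 \in \ce^\times$, the characterization of Hardy--Littlewood domination recalled in Subsection \ref{ss:intro} yields an operator $T$, contractive on both $\A$ and $\A_\star$, with $y_1 = Ty_2$. Dualizing $T$ through the trace pairing and using the strong symmetry of $\ce$ for test elements $x \in \ball(\ce)$ transfers the supremum bound from $y_2$ to $y_1$, giving $\|y_1\|_{\ce^\times} \leq \|y_2\|_{\ce^\times}$. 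Plain symmetry ($\mu_y \leq \mu_x$) is an immediate consequence of the trace inequality.

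For (2), the nontrivial implication is that order continuity forces $\ce^\star = \ce^\times$. I would define the natural map $J : \ce^\times \to \ce^\star$ by $(Jy)(x) = \tau(xy)$, which is a positive isometric embedding by the trace inequality together with the sharpness of Hardy--Littlewood bounds. To see $J$ is surjective, pick $f \in \ce^\star_+$ and restrict it to $\ce \cap \A$; order continuity of $\ce$ forces this restriction to be normal (since $x_\alpha \downarrow 0$ in $\A$-bounded subsets of $\ce$ implies $\|x_\alpha\|_\ce \to 0$ and hence $f(x_\alpha) \to 0$), so standard theory supplies a representing element $y \in L_1(\tau)$; the estimate $|\tau(xy)| \leq \|f\| \|x\|_\ce$ forces $y \in \ce^\times$. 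The converse uses Lemma \ref{lem:+l_infty}: if $\ce$ is not order continuous, there is a positive copy of $\ell_\infty$ in $\ce$ (or a similar obstruction via $\ce^{an}$), and composing a singular state on $\ell_\infty$ with the associated projection produces a functional in $\ce^\star \setminus J(\ce^\times)$.

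For (3), given the Fatou property, I would approximate $x \in \ce^{\times\times}_+$ from below by a norm-bounded net $x_{e,n} = e(x \wedge n\one)e$ indexed by finite-trace projections $e$; each $x_{e,n}$ lies in $L_1(\tau) \cap \A \subset \ce$, the net increases to $x$, and the Fatou property gives $x \in \ce$ with $\|x\|_\ce = \sup_{e,n}\|x_{e,n}\|_\ce$. Conversely, if $\ce = \ce^{\times\times}$, a norm-bounded increasing net in $\ce_+$ has a weak$^*$ supremum in $\ce^{\times\times} = \ce$ whose norm equals $\sup$ of the net norms, yielding the Fatou property.

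The main obstacle I expect is part (2): when $\tau$ is merely semi-finite rather than finite, the representation of a normal positive functional on $\A$ by an $L_1(\tau)$ element, together with the verification that this element lies in $\ce^\times$ (not just in $\tilde\A$), requires the order continuity hypothesis in an essential way and is exactly where my sketch would lean most heavily on the technology developed in \cite{DDdP93}.
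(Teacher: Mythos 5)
The paper offers no proof of this proposition at all: it is quoted verbatim from \cite[Section 5]{DDdP93} (``The following result is contained in...''), so there is no internal argument to compare yours against. Judged on its own terms, your outline of (1) and the forward direction of (2) follows the standard K\"othe-duality route (adjointing the contraction $T$ witnessing $y_1 \pprec y_2$ through the trace pairing; localizing $f$ to $e \A e$ for finite-trace $e$ and invoking normality), and the converse of (3) is fine since a K\"othe bidual always has the Fatou property.

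There is, however, a concrete error in your proof of the forward direction of (3): the net $x_{e,n} = e(x \wedge n\one)e$, indexed by finite-trace projections $e$, is \emph{not} increasing. For a positive operator $a$ and projections $e_1 \leq e_2$, the inequality $e_1 a e_1 \leq e_2 a e_2$ fails in general (take $a$ the $2\times 2$ matrix of all ones, $e_1 = E_{11}$, $e_2 = \one$; then $a - E_{11}$ has negative determinant). Since the whole point of the Fatou argument is to produce an increasing bounded net converging to $x$, this breaks the step ``the net increases to $x$.'' The repair is to compress by spectral projections of $x$ itself, e.g.\ $e_n = \chi_{[1/n,\,n]}(x)$, for which $e_n x e_n = x e_n \uparrow x$ because everything commutes; one then still has to check $x e_n \in L_1(\tau)\cap\A$, which requires a separate argument when $\tau$ is semi-finite. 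A smaller issue: in the converse of (2) you invoke Lemma \ref{lem:+l_infty}, which the paper states only for \emph{fully} symmetric spaces, whereas the proposition assumes only strong symmetry; your hedge via $\ce^{an}$ is the right instinct, but as written the lemma does not apply.
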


% \query{move?}
% A note on order continuity of symmetric spaces: since such spaces
% are order complete, it suffices to check that $\lim_n \|f_n\| = 0$
% whenever a positive sequence $(f_n)$ decreases to $0$
% \cite[Proposition 1.a.8]{LT2}.

\begin{proposition}\label{prop:to_order_cont}
Suppose $\ce = \ce(\tau)$ is a non-commutative strongly symmetric KB
function space, $X$ a generating OBS, and
$0 \leq T \leq S : X \to \ce$, with $S$ weakly compact.
Then $T$ is weakly compact as well.
% Suppose $0 \leq T \leq S$ are operators from $X$ to $Y$, where
% $X$ a generalized OBS, and $Y$ is either an order continuous Banach lattice,
% or a space $\ce(\tau)$, where the strongly symmetric function
% space $\ce$ has the Fatou Property. If $S$ is weakly compact,
% then the same is true of $T$.
\end{proposition}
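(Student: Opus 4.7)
The plan is to pass to biadjoints and invoke Gantmacher's theorem: $T$ will be weakly compact provided $T^{\star\star}(X^{\star\star}) \subset \ce$ under the canonical embedding $\ce \hookrightarrow \ce^{\star\star}$. Since $T$ and $S - T$ are positive operators, their adjoints $T^\star, (S-T)^\star : \ce^\star \to X^\star$ send $\ce^\star_+$ into $X^\star_+$, and this positivity lifts to the biadjoints. Thus for every $\xi \in X^{\star\star}_+$ one has $0 \leq T^{\star\star}\xi \leq S^{\star\star}\xi$ in $\ce^{\star\star}$, while the weak compactness of $S$ together with Gantmacher provides $S^{\star\star}\xi \in \ce$. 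Because $X$ is generating, $X^\star$ is normal and hence $X^{\star\star}$ is again generating, so every $\xi \in X^{\star\star}$ splits as a difference of two positive elements. The problem therefore reduces to the following ideal property of $\ce$ inside $\ce^{\star\star}$: if $z \in \ce^{\star\star}$ satisfies $0 \leq z \leq y$ for some $y \in \ce$, then $z \in \ce$.

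To establish this ideal property I would invoke the duality theory of non-commutative symmetric function spaces (Proposition \ref{prop:DDdP}, drawing on \cite{DDdP93} and \cite{DdP12}). Order continuity of $\ce$ identifies $\ce^\star$ with $\ce^\times$ via the trace pairing, so that $\ce^{\star\star} = (\ce^\times)^\star$. The Fatou property yields $\ce = \ce^{\times\times}$, and the canonical embedding $\ce \hookrightarrow (\ce^\times)^\star$ realizes $\ce$ precisely as the subspace of those functionals on $\ce^\times$ which are order-continuous, i.e., satisfy $\langle y, \eta_\alpha \rangle \to 0$ whenever $\eta_\alpha \downarrow 0$ in $\ce^\times_+$. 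This amounts to a Yosida--Hewitt-type splitting of $\ce^{\star\star}$ into order-continuous and singular parts, and makes the ideal property immediate: for $0 \leq z \leq y$ with $y \in \ce$, the sandwich $0 \leq \langle z, \eta_\alpha \rangle \leq \langle y, \eta_\alpha\rangle \to 0$ forces $z$ to inherit order continuity from $y$, whence $z \in \ce^{\times\times} = \ce$.

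The hard part is the identification of $\ce$ as the order-continuous part of its bidual; this uses essentially both halves of the KB hypothesis (order continuity, to get $\ce^\star = \ce^\times$; and Fatou, to get $\ce = \ce^{\times\times}$). The commutative analogue is classical, but verifying this in the present non-commutative setting requires extracting the appropriate Yosida--Hewitt-type decomposition from the cited literature on symmetric operator spaces.
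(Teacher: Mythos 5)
Your proposal is correct and follows essentially the same route as the paper: both pass to biadjoints via Gantmacher, use that $X^{\star\star}$ is generating, and reduce to the Yosida--Hewitt decomposition of $(\ce^\times)^\star$ from \cite{DDdP93}, where the paper phrases the key step as ``a normal functional cannot dominate a singular one'' and you unpack the very same fact as the order-continuity sandwich $0 \leq \langle z, \eta_\alpha \rangle \leq \langle y, \eta_\alpha\rangle \to 0$ together with $\ce = \ce^{\times\times}$.
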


\begin{proof}
By \cite[Section 5]{DDdP93}, any positive element
$\phi \in \ce^{\star\star} = (\ce^\times)^\star$ can be written as
$\phi(f) = \tau(a f) + \psi(f)$, where $a \in \ce$ is positive,
and $\psi$ is a positive singular functional. The canonical
embedding of $\ce$ into its double dual takes $a$ to the
linear functional $f \mapsto \tau(fa)$.

$S$ is weakly compact, hence $S^{\star\star}(X) \subset \ce$.
A normal functional cannot dominate a singular one,
hence $T^{\star\star}(\ball(X^{\star\star})_+) \subset \ce$.
As noted in Section~\ref{ss:intro}, $X^{\star\star}$ is a generating OBS,
hence $T^{\star\star}(\ball(X^{\star\star})) \subset \ce$.
Therefore, $T$ is weakly compact.
\end{proof}

Alternatively, one can prove the above result using the characterization of
$\sigma({\mathcal{F}}^\times,{\mathcal{F}})$-compact sets given in
\cite[Proposition 6.2]{DdP09}.

\begin{remark}\label{rem:ord_cont}
% \query{remove or modify}
Note that the assumptions of Proposition \ref{prop:to_order_cont}
are stronger than those of its commutative counterpart --
Theorem \ref{th:weak_dom}. For instance, the statement of
Theorem \ref{th:weak_dom}(i) holds when the range space is
order continuous. Propositions \ref{prop:to_order_cont} is
proved under the additional assumption of the Fatou property.
One reason for this is that much more is known about order
continuous Banach lattices (see e.g. \cite[Section 2.4]{M-N}).
One useful characterization states that a Banach lattice $\ce$
is order continuous iff it is an ideal in its second dual.
No such description seems to be known in the non-commutative setting.
\end{remark}

\section{Miscellaneous results}\label{sec:misc}

\subsection{$2$-positivity and decomposability: negative results}\label{ss:positive}

In this section we consider stronger versions of positivity,
such as $2$-positivity and indecomposability, as well as
the appropriate notions of domination. We show that these properties
are not, in general, inherited by the dominated operator.

\begin{proposition}\label{prop:nogo}
(a) There are $0 \leq T \leq_c S$, acting on $M_2$, so that $S$ is completely positive,
but $T$ is not $2$-positive.

(b) There are $0 \leq T \leq_c S$, acting on $M_3$, so that $S$ is completely positive,
but $T$ is not decomposable.
\end{proposition}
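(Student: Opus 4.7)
The plan is to build explicit counterexamples in $M_2$ and $M_3$ using two classical positive maps that fail stronger positivity properties, with very simple CP dominators constructed via the trace. In both cases the verification reduces to comparing two Choi matrices.

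For part (a), I would take $T$ to be the transposition map on $M_2$. This is the textbook example of a positive map that is not $2$-positive: its Choi matrix is the flip operator $F$ on $\C^2\otimes\C^2$, whose spectrum is $\{1,1,1,-1\}$. For the dominator I would set $S(X)=(\tr X) I_2$, whose Choi matrix is the $4\times 4$ identity. A one-line spectral computation gives $C_S-C_T = I-F\geq 0$ (its eigenvalues are $\{0,0,0,2\}$), so $S-T$ is CP and $S$ is manifestly CP. This yields $0\leq T\leq_c S$ with $T$ positive but not $2$-positive.

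For part (b), I would take $T$ to be Choi's classical indecomposable positive map on $M_3$, whose positivity and failure of decomposability are well-known; I would cite these facts rather than reprove them. Since $T$ is completely bounded (as is every linear map on a finite dimensional $C^*$-algebra), it is dominated in the CP order by any large enough ``trace map.'' Concretely, set $S(X)=c(\tr X)I_3$ with $c\geq \|C_T\|_{\mathrm{op}}$. Then $C_S=cI_9$ and $C_S-C_T\geq 0$ by the choice of $c$, so $S-T$ is CP and $S$ is CP. Hence $0\leq T\leq_c S$ with $T$ positive but not decomposable.

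The only nontrivial ingredient is the indecomposability of Choi's map, which I would pull from the literature rather than reprove. Everything else is a routine Choi-matrix computation, whose hardest step is verifying positive semidefiniteness of a single explicit matrix by reading off its spectrum.
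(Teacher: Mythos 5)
Your proposal is correct, and for part (a) it is the paper's example verbatim: $T$ is transposition on $M_2$ and $S=\trace(\cdot)\,\one$. The only difference is the certificate for $S-T\geq_c 0$: you read off the spectrum of the Choi matrix $I-F$, while the paper exhibits the Kraus form $(S-T)(a)=uau^*$ with $u=\left(\begin{smallmatrix}0&1\\-1&0\end{smallmatrix}\right)$; these are the same fact, since $I-F$ is twice the rank-one projection onto the antisymmetric vector $e_1\otimes e_2-e_2\otimes e_1$, whose reshaping is precisely $u$. For part (b) the paper also starts from the Choi--St{\o}rmer indecomposable positive map (written there as $T=U+V$, citing \cite{St82} for positivity and indecomposability, just as you cite the literature), but it tunes the dominator to $S=V+2W$ so that $S-T$ is literally the identity map of $M_3$, which is manifestly completely positive. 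Your generic dominator $S=c\,\trace(\cdot)\,I_3$ with $c\geq\|C_T\|_{\mathrm{op}}$ works equally well; the one point worth making explicit is that $cI_9-C_T\geq 0$ requires $C_T$ to be Hermitian, which holds because $T$, being positive, is Hermitian-preserving (your appeal to complete boundedness is not quite the right reason, but the conclusion is fine). The trade-off is that the paper's choice is fully explicit and self-contained, whereas your scaled-trace dominator is more robust: it shows that \emph{any} positive map on a matrix algebra is dominated in the CP order by a multiple of the trace map, so the same scheme would convert any known positive-but-not-decomposable (or not $k$-positive) map into a counterexample.
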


For the definition and basic properties of decomposable maps, see e.g.~\cite{St82}.
Note that part (b) is optimal in the sense that any positive map from $M_2$ to $M_3$
is decomposable \cite{Wo}.

In the proof below, we use the notation $E_{ij}$ for the matrix with $1$ in the $(i,j)$
position, and $0$'s elsewhere.

\begin{proof}
(a) Define $T(a) = a^t$, and $S(a) = \trace(a) \one$
($\trace( \cdot )$ stands for the canonical trace on $M_2$).
% ($\tr$ is the normalized trace on $M_2$).
Clearly, $T \geq 0$, and $S$ is completely
positive. Indeed, consider
$a = \sum_{i,j=1}^n E_{ij} \otimes a^{(ij)} \in M_n(M_2) \geq 0$
(here, $a^{(ij)} = (a^{ij}_{k\ell})_{k,\ell=1}^2 \in M_2$). 
Passing to submatrices, we see that
for $k=1,2$, the $n \times n$ matrix $a^\prime_k = (a^{(ij)}_{kk})$ is positive.
Thus, $(I_{M_n} \otimes S) a =
 (a^\prime_1 + a^\prime_2) \otimes (E_{11} + E_{22}) \geq 0$.

The fact that $T$ is not $2$-positive is folklore: just apply $I_{M_2} \otimes T$ to
$\sum_{i,j=1}^2 E_{ij} \otimes E_{ij}$.
To establish that $S-T \geq_{cp} 0$, note that $(S-T)(a) = u a u^*$,
where $u = \begin{pmatrix} 0 & 1 \\ -1 & 0 \end{pmatrix}$.

(b) Define % We use the example of \cite{St82}. Set
$$
U \begin{pmatrix}
   a_{11}  &  a_{12}  &  a_{13}  \\
   a_{21}  &  a_{22}  &  a_{23}  \\
   a_{31}  &  a_{32}  &  a_{33}
\end{pmatrix} =
\begin{pmatrix}
   a_{11}    &  - a_{12}  &  - a_{13}  \\
   - a_{21}  &  a_{22}    &  - a_{23}  \\
   - a_{31}  &  - a_{32}  &  a_{33}
\end{pmatrix} ,
$$
$$
V \begin{pmatrix}
   a_{11}  &  a_{12}  &  a_{13}  \\
   a_{21}  &  a_{22}  &  a_{23}  \\
   a_{31}  &  a_{32}  &  a_{33}
\end{pmatrix} =
\begin{pmatrix}
   a_{33}  &  0         &  0            \\
   0         &  a_{11}  &  0            \\
   0         &  0          &  a_{22}
\end{pmatrix} ,
$$
$$
W \begin{pmatrix}
   a_{11}  &  a_{12}  &  a_{13}  \\
   a_{21}  &  a_{22}  &  a_{23}  \\
   a_{31}  &  a_{32}  &  a_{33}
\end{pmatrix} =
\begin{pmatrix}
   a_{11}   &  0         &  0            \\
   0        &  a_{22}    &  0            \\
   0        &  0         &  a_{33}
\end{pmatrix} .
$$
Let $T = U + V$, and $S = V + 2W$.
By \cite{St82}, $T$ is positive, but not decomposable. On the other hand,
the maps $V$ and $W$ are completely positive, hence so is $S$. Furthermore,
$S - T = I$ (the identity map on $M_3$), hence it is completely positive as well.
\end{proof}

For powers of operators, we get:

\begin{proposition}\label{prop:nogo powers}
There are $0 \leq T \leq_c S$, acting on $M_2$, so that $S$ is completely positive,
while $T$ is not $2$-positive, and $T = T^2$.
\end{proposition}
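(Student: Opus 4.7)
The plan is to define $T(a) = \frac{1}{2}(a + a^t)$, the symmetrization map on $M_2$. I would then verify the three required properties in turn: positivity holds because transposition preserves positivity on $M_2$, so $a \geq 0$ gives $T(a) \geq 0$; idempotency $T^2 = T$ holds because $T(a)^t = T(a)$, so a second application of $T$ does nothing; and $T$ fails to be $2$-positive, as one sees by computing the Choi matrix $\sum_{i,j=1}^2 E_{ij} \otimes T(E_{ij})$ and observing that it contains the principal $2 \times 2$ submatrix $\begin{pmatrix} 0 & 1/2 \\ 1/2 & 0 \end{pmatrix}$, whose eigenvalues are $\pm 1/2$.

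For the dominating CP map, I would invoke the identity $uau^* = \trace(a)\one - a^t$ (with $u = \begin{pmatrix} 0 & 1 \\ -1 & 0 \end{pmatrix}$) from the proof of Proposition~\ref{prop:nogo}(a). Substituting into the definition of $T$ gives
$$
T(a) = \tfrac{1}{2}a + \tfrac{1}{2}\trace(a)\one - \tfrac{1}{2}uau^*,
$$
which suggests the choice
$$
S(a) := T(a) + uau^* = \tfrac{1}{2}a + \tfrac{1}{2}\trace(a)\one + \tfrac{1}{2}uau^*.
$$
Then $S - T$ is the single-Kraus map $a \mapsto uau^*$, which is completely positive, so $T \leq_c S$. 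At the same time, $S$ is the sum of three manifestly CP maps: a scalar multiple of the identity, the composition $a \mapsto \trace(a)\one$ of the trace with the unital embedding $\C \to M_2$, and the single-Kraus map $a \mapsto \tfrac{1}{2}uau^*$. Hence $S$ is itself CP, as required.

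The only real obstacle is conceptual: $T$, being merely positive rather than completely positive, carries a hidden ``copositive'' summand proportional to the transpose $\tau$, and this summand has to be neutralized by the CP correction used to build $S$. Rewriting $T$ in the CP-friendly basis $\{\mathrm{id},\;a \mapsto \trace(a)\one,\;a \mapsto uau^*\}$ via the identity from part~(a) makes the correct choice of $S$ entirely transparent.
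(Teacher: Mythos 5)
Your proposal is correct and follows essentially the same route as the paper: the same $T(a)=(a+a^t)/2$, the same Choi-matrix computation witnessing the failure of $2$-positivity, and a dominating map built from the identity $uau^*=\trace(a)\one-a^t$. The only (immaterial) difference is the choice of $S$: the paper takes $S(a)=(\trace(a)\one+a)/2$, so that $(S-T)(a)=\tfrac{1}{2}uau^*$, whereas you add the full Kraus term $uau^*$; both choices are completely positive and both dominate $T$ completely.
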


\begin{proof}
Define $T(a) = (a+a^t)/2$, and $S(a) = (\trace(a) \one + a)/2$.
As in the proof of Proposition~\ref{prop:nogo}, we can establish
the inequalities $0 \leq_c S$ and $0 \leq T \leq_c S$.
Clearly, $T  = T^2$. To show that $T$ is not $2$-positive, consider
$x = \sum_{i,j=1}^2 E_{ij} \otimes E_{ij} \in M_2 \otimes M_2$. $x$ can be
identified with the $4 \times 4$ matrix
$$
\begin{pmatrix}
   1 & 0 & 0 & 1  \\
   0 & 0 & 0 & 0  \\
   0 & 0 & 0 & 0  \\
   1 & 0 & 0 & 1
\end{pmatrix} .
$$
Then
$$
(I_{M_2} \otimes T)(x) = \frac{1}{2} \begin{pmatrix}
   2 & 0 & 0 & 1  \\
   0 & 0 & 1 & 0  \\
   0 & 1 & 0 & 0  \\
   1 & 0 & 0 & 2
\end{pmatrix} ,
$$
which is not positive.
\end{proof}

\begin{remark}\label{rem:powers}
It is not clear whether we can strengthen Proposition~\ref{prop:nogo}(b) to make
the powers of $T$ (not just $T$ itself) non-decomposable. The operator $T$ presented
in the proof of Proposition~\ref{prop:nogo}(b) will not work, since $T^2$ is completely
positive. Indeed, \cite{St82} shows that $T = U + \mu V$ is not decomposable for
$\mu \geq 1$. However, $U^2 = I$, and $UV = VU = V$.
Thus, $T^2 = I + 2 \mu V + \mu^2 V^2$, which is completely positive.
\end{remark}

\subsection{A remark on operator systems}\label{ss:op_sys}
In previous section, we were working with non-commutative function spaces,
or with $C^*$-algebras. This brief section shows that general operator
systems have too few positive elements for any results about domination
and inheritance of properties.

% In theory, we can consider positive maps on operator systems.
Recall that an \emph{operator system} is a subspace of $B(H)$,
closed under conjugation. It is \emph{unital} if it contains $\one$.
if $A$ and $B$ are operator systems, and $T : A \to B$, we say that
$T$ is positive ($T \geq 0$) if $Ta \geq 0$ for any $a \geq 0$. Moreover,
$T$ is completely positive ($T \geq_c 0$) if $T \otimes I_{M_n} \geq 0$
for every $n$. Write $T \geq S$ ($T \geq_c S$) if $T - S \geq 0$
(resp.~$T - S \geq_c 0$).

It turns out that little can be said about domination in operator systems.
More precisely, there exists a unital operator system $A$, and a rank $1$
$S \in B(A)$, so that $I_A \leq_c S$. $A$ may be chosen to be infinite dimensional,
and even non-separable. We describe the construction of $A$ and $S$ below.

Suppose $X \subset B(H)$ is an operator
system (not necessarily unital). Using ``Paulsen's trick'', define $A$ as the
set of all block matrices on $H \oplus_2 H$, of the form
$\begin{pmatrix} \lambda \one_H & x \\ y & \lambda \one_H \end{pmatrix}$,
where $\lambda \in \C$, and $x, y \in X$. It is easy to see that
$\begin{pmatrix} \lambda \one_K & x \\ y & \lambda \one_H \end{pmatrix} \geq 0$
iff $x = y^*$, and $\lambda \geq \|x\|$. Set
$S \begin{pmatrix} \lambda \one_H & x \\ y & \lambda \one_H \end{pmatrix} =
2 \begin{pmatrix} \lambda \one_H & 0 \\ 0 & \lambda \one_H \end{pmatrix} =
2 \lambda \one_{H \oplus_2 H}$.

\begin{proposition}\label{prop:dom_op_sys}
In the above notation, $S \geq_c I_A$.
\end{proposition}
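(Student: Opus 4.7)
The plan is to exhibit $S - I_A$ as conjugation by a self-adjoint unitary on $H \oplus_2 H$; since any such conjugation is a $*$-automorphism of $B(H \oplus_2 H)$, it is automatically completely positive, and complete positivity of the restriction to the operator system $A$ follows immediately.

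Concretely, I would take the self-adjoint unitary
$$
U = \begin{pmatrix} \one_H & 0 \\ 0 & -\one_H \end{pmatrix}
$$
on $H \oplus_2 H$. For a generic element $a = \begin{pmatrix} \lambda \one_H & x \\ y & \lambda \one_H \end{pmatrix} \in A$, a direct $2 \times 2$ block multiplication gives
$$
U^* a U = \begin{pmatrix} \lambda \one_H & -x \\ -y & \lambda \one_H \end{pmatrix},
$$
which again lies in $A$ because $X$ is a linear subspace of $B(H)$. On the other hand,
$$
(S - I_A)(a) = 2 \lambda \one_{H \oplus_2 H} - a = \begin{pmatrix} \lambda \one_H & -x \\ -y & \lambda \one_H \end{pmatrix},
$$
so $S - I_A$ coincides, on $A$, with the map $b \mapsto U^* b U$. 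The latter, viewed on all of $B(H \oplus_2 H)$, is a $*$-automorphism, hence completely positive; restricting its domain and codomain to $A$ preserves this, so $S - I_A \geq_c 0$, i.e.\ $S \geq_c I_A$.

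There is essentially no obstacle -- the whole point of the example is that the operator system $A$ produced by Paulsen's trick carries so few positive elements (only scalar-diagonal multiples with a norm bound on the off-diagonal corners) that the naive ``sign-flip'' unitary $\one_H \oplus (-\one_H)$ is already enough to sandwich $I_A$ between $0$ and $S$ in the completely positive order. The only verification needed is that $U^* a U$ lands back in $A$ for every $a \in A$, which is immediate from the diagonal form of $U$ and the fact that $X = -X$.
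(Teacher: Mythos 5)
Your proof is correct and is essentially identical to the paper's: both exhibit $S - I_A$ as conjugation by the self-adjoint unitary $u = \one_H \oplus (-\one_H)$, which is completely positive. The only difference is that you spell out why unitary conjugation is completely positive and why it maps $A$ into $A$, which the paper leaves implicit.
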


\begin{proof}
It suffices to observe that
$$
(S - I_A) \begin{pmatrix} \lambda \one_H & x \\ y & \lambda \one_H \end{pmatrix} =
\begin{pmatrix} \lambda \one_H & -x \\ -y & \lambda \one_H \end{pmatrix} =
u \begin{pmatrix} \lambda \one_H & x \\ y & \lambda \one_H \end{pmatrix} u ,
$$
with $u = \begin{pmatrix} \one_H & 0 \\ 0 & -\one_H \end{pmatrix}$.
\end{proof}

% {\noindent \bf Acknowledgments}.
% We would like to thank the anonymous referee for bringing
% \cite{CS} and \cite{Wn89a} to our attention.

%\bibliographystyle{alpha}
%\bibliography{tv}

\end{document}